\sloppy\pagestyle{plain}
\newtheorem{theorem}[equation]{Theorem}
\newtheorem*{theorem*}{Theorem}
\newtheorem{proposition}[equation]{Proposition}
\newtheorem*{proposition*}{Proposition}
\newtheorem{lemma}[equation]{Lemma}
\newtheorem{corollary}[equation]{Corollary}
\newtheorem*{corollary*}{Corollary}
\newtheorem*{problem*}{Problem}
\newtheorem*{question*}{Question}
\newtheorem*{construction*}{Construction}
\newtheorem*{maintheorem*}{Main Theorem}
\theoremstyle{definition}
\newtheorem*{example*}{Example}
\newtheorem*{definition*}{Definition}
\theoremstyle{remark}
\newtheorem{remark}[equation]{Remark}
\newtheorem*{remark*}{Remark}
\makeatletter\@addtoreset{equation}{section} \makeatother
\newcommand{\sslash}{\mathbin{/\mkern-6mu/}}
\author{Ivan Cheltsov, Maksym Fedorchuk, Kento Fujita, Anne-Sophie Kaloghiros}
\title{K-moduli of pure states of four qubits}
\address{\emph{Ivan Cheltsov}
\newline
\textnormal{University of Edinburgh,  Edinburgh, Scotland}
\newline
\textnormal{\texttt{i.cheltsov@ed.ac.uk}}}
\address{\emph{Maksym Fedorchuk}
\newline
\textnormal{Boston College, Chestnut Hill, Massachusetts}
\newline
\textnormal{\texttt{maksym.fedorchuk@bc.edu}}}
\address{\emph{Kento Fujita}
\newline
\textnormal{Osaka University, Osaka, Japan}
\newline
\textnormal{\texttt{fujita@math.sci.osaka-u.ac.jp}}}
\address{\emph{Anne-Sophie Kaloghiros}
\newline
\textnormal{Brunel University London, Uxbridge, England}
\newline
\textnormal{\texttt{anne-sophie.kaloghiros@brunel.ac.uk}}}
\begin{document}

\begin{abstract}
We find all K-polystable limits of divisors in $(\mathbb{P}^1)^4$ of degree $(1,1,1,1)$
and explicitly describe the~associated irreducible component of the~K-moduli space.
\end{abstract}

\maketitle

\section{Introduction}
\label{section:intro}

Smooth Fano 3-folds have been classified by Iskovskikh, Mori, Mukai into 105 \mbox{families}.
Fano 3-folds in each deformation family can be parametrised by an~irreducible rational variety.
For instance, the~family \textnumero~4.1 contains smooth Fano 3-folds of Picard rank $4$ and anticanonical degree $24$.
These 3-folds are smooth divisors of degree $(1,1,1,1)$ in~$(\mathbb{P}^1)^4$.
Moreover, it follows from \cite{ChterentalDjokovic,HolweckLuquePlanat,Verstraete} that
each smooth member of this deformation family can be given~in  $\mathbb{P}^1_{x_1,y_1}\times\mathbb{P}^1_{x_2,y_2}\times\mathbb{P}^1_{x_3,y_3}\times\mathbb{P}^1_{x_4,y_4}$ by the~equation
\begin{multline}
\label{equation:form-1}\tag{$\heartsuit$}
\frac{a+d}{2}\big(x_1x_2x_3x_4+y_1y_2y_3y_4\big)
+\frac{a-d}{2}\big(x_1x_2y_3y_4+y_1y_2x_3x_4\big)+\\
+\frac{b+c}{2}\big(x_1y_2x_3y_4+y_1x_2y_3x_4\big)
+\frac{b-c}{2}\big(x_1y_2y_3x_4+y_1x_2x_3y_4\big)=0
\end{multline}
for some $a,b,c,d\in\mathbb{C}$ such that $(a^2-b^2)(a^2-c^2)(a^2-d^2)(b^2-c^2)(b^2-d^2)(c^2-d^2)\ne 0$.
So, smooth Fano 3-folds in the~family \textnumero 4.1 can be parametrised by an~open subset in $\mathbb{P}^3$.
These 3-folds together with their singular limits appeared in many problems in algebraic geometry \cite{BelousovLoginov,Boozer,CheltsovDuboulozKishimoto,CheltsovMarquandTschinkelZhang,CheltsovTschinkelZhang,KuznetsovProkhorov}
and also in mathematical physics \cite{ChterentalDjokovic,DietrichGraafMarraniOriglia,GourWallach,HolweckLuquePlanat,HolweckLuqueThibon,LuqueThibon,Verstraete}.
For example, all smooth 3-folds in this family are known to be K-stable \cite{BelousovLoginov}.

On the~other hand, we know that there exists a~projective moduli space $\mathrm{M}^\mathrm{Kps}_{3,24}$
whose closed points parametrise K-polystable Fano 3-folds of (anticanonical) degree $24$ \cite{LiuXuZhuang}.
The~goal of this paper is to describe the~irreducible component of this space whose points parametrise smooth members of the~family \textnumero 4.1
together with their K-polystable limits.

\begin{maintheorem*}
Let $\mathbf{M}$ be the~irreducible component of the~moduli space $\mathrm{M}^\mathrm{Kps}_{3,24}$ whose points parametrise K-polystable limits of smooth Fano 3-folds in the~family~\textnumero~4.1.
Then
\begin{itemize}
\item $\mathbf{M}$ is the~blow-up at a~smooth point of $\mathbb{P}(1,3,4,6)$ with weights $(1,2,3)$;
\item if $X$ is a~Fano 3-fold parameterised by a~closed point in $\mathbf{M}$, then
\begin{itemize}
\item either $X$ is a~divisor in $(\mathbb{P}^1)^4$ given by \eqref{equation:form-1} such that
$$
(a:b:c:d)\not\in\big\{(1:0:0:0), (0:1:0:0), (0:0:1:0), (0:0:0:1), (1:\pm 1:\pm 1:\pm 1)\big\},
$$

\item or $X$ is a~divisor in $\mathbb{P}(1_{s_1},1_{t_1},2_{w_1})\times \mathbb{P}(1_{s_2},1_{t_2},2_{w_2})$ given by
\begin{equation}
\label{equation:form-2}\tag{$\diamondsuit$}
w_1w_2=as_1t_1s_2t_2 +\frac{b+c}{4}\big(s_1^2s_2^2+t_1^2t_2^2\big)+\frac{b-c}{4}\big(s_1^2t^2_2+t^2_1s^2_2\big)
\end{equation}
for any $(a:b:c) \in \mathbb{P}^2$.
\end{itemize}
\end{itemize}
\end{maintheorem*}

For more details about the~3-fold $\mathbf{M}$ in this theorem, see Proposition \ref{proposition:toric}.

\begin{remark}
\label{remark:Colino}
Let $X$ be one of the~K-polystable Fano 3-folds described in Main Theorem,
and let $\widetilde{X}\to X$ be its standard resolution of singularities which is given by the~ordinary blow up of the~singular locus.
Then the~following are equivalent
\begin{enumerate}
\item the~Intermediate Jacobian of the~3-fold $\widetilde{X}$ is not trivial,
\item the~Intermediate Jacobian of the~3-fold $\widetilde{X}$ is a smooth elliptic curve,
\item either $X$ is smooth or $X$ is a divisor in $\big(\mathbb{P}(1,1,2)\big)^2$ given by \eqref{equation:form-2} such that
$$
\mathrm{Sing}(X)=\big\{s_1=t_1=w_2=0\big\}\cup\big\{s_2=t_2=w_1=0\big\}.
$$
\end{enumerate}
This reminds a~similar phenomenon discovered in \cite{Collino} for degenerations of cubic 3-folds,
which is also discussed in \cite[Remark~5.6]{KollarLazaSaccaVoisin}.
\end{remark}

The structure of the~paper is the~following.
In Section~\ref{section:geometry}, we describe the~GIT moduli space of divisors of degree $(1,1,1,1)$ in $(\mathbb{P}^1)^4$,
and we describe singularities of divisors~\eqref{equation:form-1}.
In Section~\ref{section:2-2-divisors}, we study basic properties of divisors in  $\big(\mathbb{P}(1,1,2)\big)^2$ given by the~equation~\eqref{equation:form-2}.
In Section~\ref{section:K-stability}, we prove K-polystability of the~following Fano 3-folds:
\begin{itemize}
\item all irreducible divisors in $(\mathbb{P}^1)^4$ given by \eqref{equation:form-1};
\item all divisors in $\big(\mathbb{P}(1,1,2)\big)^2$ given by \eqref{equation:form-2}.
\end{itemize}
In Section~\ref{section:K-moduli}, we complete the~proof of Main Theorem.
In Section~\ref{section:stack}, we study K-semistable degenerations of smooth Fano 3-folds in the~family \textnumero 4.1.
Finally, in Appendix~\ref{section:4-lines}, we~show that singular K-polystable divisors if degree $(1,1,1,1)$ in $(\mathbb{P}^1)^4$ and their K-polystable limits
can be constructed as anticanonical models of very simple blow ups of  $\mathbb{P}^3$.

\medskip
\noindent
\textbf{Acknowledgements.}
Ivan Cheltsov has been supported by EPSRC grant  EP/V054597/1 and by Simons Collaboration grant \emph{Moduli of varieties}.
Kento Fujita has been supported by JSPS KAKENHI Grant Number 22K03269, Royal Society International Collaboration Award ICA\textbackslash 1\textbackslash 23109 and Asian Young Scientist Fellowship.
Anne-Sophie Kaloghiros has been supported by EPSRC grant EP/V056689/1.

\section{Divisors of degree $(1,1,1,1)$ in $(\mathbb{P}^1)^4$}
\label{section:geometry}

In this section, we study divisors of degree $(1,1,1,1)$ in $\mathbb{P}^1_{x_1,y_1}\times\mathbb{P}^1_{x_2,y_2}\times\mathbb{P}^1_{x_3,y_3}\times\mathbb{P}^1_{x_4,y_4}$.
Let $V=H^0\big((\mathbb{P}^1)^4, \mathcal{O}(1,1,1,1)\big)$. Set $\Gamma_0=\operatorname{SL}_2(\mathbb{C})^4$,
set
$$
\Gamma=\Gamma_0\rtimes \mathfrak{S}_4=\operatorname{SL}_2(\mathbb{C})^4\rtimes\mathfrak{S}_4,
$$
and consider the~natural actions of these two groups on the~space $V$.
In this~section, we will study the~projective GIT quotient $\mathbb{P}(V)^{\operatorname{ss}}\sslash \Gamma$.
Here, we use classical notation for the~projectivization $\mathbb{P}(V)$ --- the~variety of one-dimensional subspaces in $V$.

Note that, physicists often use the~symbol $\operatorname{SLOCC}$ in place of $\Gamma_0$,
because this reductive group is called as \emph{Stochastic Local Operations with Classical Communication}.

\subsection{Normal forms}
Forms in $V$ were studied extensively in the~context of quantum computing \cite{ChterentalDjokovic,GourWallach,HolweckLuquePlanat,HolweckLuqueThibon,LuqueThibon,Verstraete},
as they represent entanglement of systems of four~qubits.
In particular, it was shown that such forms can be put in normal forms,
which gives

\begin{corollary}[{\cite{Verstraete,ChterentalDjokovic}}]
\label{corollary:smooth}
Let $f$ be a~non-zero form in $V$ such that $\{f=0\}$ is smooth.
Then the~orbit $\Gamma_0.f$ contains the~form
\begin{multline}
\label{form:1}\tag{$G_{a,b,c,d}$}
\frac{a+d}{2}\big(x_1x_2x_3x_4+y_1y_2y_3y_4\big)
+\frac{a-d}{2}\big(x_1x_2y_3y_4+y_1y_2x_3x_4\big)+\\
+\frac{b+c}{2}\big(x_1y_2x_3y_4+y_1x_2y_3x_4\big)
+\frac{b-c}{2}\big(x_1y_2y_3x_4+y_1x_2x_3y_4\big)
\end{multline}
for some $(a,b,c,d)\in\mathbb{C}^4$.
\end{corollary}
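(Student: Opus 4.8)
The plan is to reduce an arbitrary form $f\in V$ with smooth zero locus to the normal form $(G_{a,b,c,d})$ by interpreting $f$ as a tensor and exploiting the double cover $\operatorname{SL}_2(\mathbb{C})\times\operatorname{SL}_2(\mathbb{C})\to\operatorname{SO}_4(\mathbb{C})$. First I would regard $f$ as an element of $\mathbb{C}^2\otimes\mathbb{C}^2\otimes\mathbb{C}^2\otimes\mathbb{C}^2$, where $\Gamma_0=\operatorname{SL}_2(\mathbb{C})^4$ acts factorwise. The classical idea going back to Verstraete et al.\ is to pair the four qubits into two groups, say $(1,2)$ and $(3,4)$, and use the isomorphism $\operatorname{SL}_2(\mathbb{C})\times\operatorname{SL}_2(\mathbb{C})\cong \operatorname{Spin}_4(\mathbb{C})$ covering $\operatorname{SO}_4(\mathbb{C})$. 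Under this identification the tensor $f$ becomes a $4\times 4$ matrix $M$ on which the two copies of $\operatorname{SO}_4(\mathbb{C})$ act by $M\mapsto gMh^{\mathsf T}$, and the problem of normalizing $f$ becomes the problem of putting $M$ into a canonical form under this two-sided orthogonal action.

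The key step is then the simultaneous orthogonal normalization of $M$. I would diagonalize $MM^{\mathsf T}$ (or, equivalently, bring $M$ to a canonical shape) using the orthogonal analogue of the singular value decomposition: over $\mathbb{C}$ a generic matrix can be brought by $g, h\in\operatorname{O}_4(\mathbb{C})$ to a diagonal matrix whose entries are the square roots of the eigenvalues of $MM^{\mathsf T}$. Translating the diagonal entries back through the $\operatorname{Spin}$ cover produces exactly the four coefficients $\tfrac{a+d}{2},\tfrac{a-d}{2},\tfrac{b+c}{2},\tfrac{b-c}{2}$ multiplying the four orthogonal ``GHZ-like'' tensors appearing in $(G_{a,b,c,d})$; unwinding these linear relations recovers the parameters $(a,b,c,d)\in\mathbb{C}^4$. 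The smoothness hypothesis on $\{f=0\}$ guarantees that $f$ is sufficiently generic (in particular $MM^{\mathsf T}$ is diagonalizable with the right rank properties) so that the orthogonal SVD applies and no degenerate Jordan block obstructs the reduction; this is precisely where the genericity granted by smoothness enters.

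The main obstacle I expect is the orthogonal diagonalization step over $\mathbb{C}$: unlike the unitary SVD, the complex-orthogonal singular value decomposition is not available for all matrices, and one must verify that $MM^{\mathsf T}$ is genuinely diagonalizable (not merely triangulable) for the $f$ under consideration. Controlling this requires translating the smoothness of the hypersurface $\{f=0\}\subset(\mathbb{P}^1)^4$ into a nondegeneracy statement about $M$, which is the crux of the argument. Since the statement is attributed to \cite{Verstraete,ChterentalDjokovic} and phrased as a corollary, I would in practice cite the tensor normal-form classification proved there and merely indicate the $\operatorname{Spin}_4$ dictionary above, checking only that the smooth locus falls into the single generic normal-form stratum $(G_{a,b,c,d})$ rather than one of the degenerate strata in their list. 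Thus the proof amounts to quoting the known normal-form theorem and observing that smoothness of $\{f=0\}$ forces $f$ into the top stratum.
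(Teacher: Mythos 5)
Your proposal is correct and takes essentially the same route as the paper: the paper offers no independent argument for this corollary, deriving it directly from the normal-form classification of \cite{Verstraete,ChterentalDjokovic}, which is exactly your fallback, and your $\operatorname{Spin}_4$/complex-orthogonal-SVD sketch faithfully describes the mechanism by which those cited works prove that classification. The one step you rightly flag as the crux --- that smoothness of $\{f=0\}$ rules out the degenerate (Jordan-block) strata of the classification --- is left implicit in the paper as well, which treats it as part of the cited results.
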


Now, we consider the~following vector subspace:
$$
\mathcal{G}=\big\{ f\in V\ \vert\ f\ \text{is the~form \eqref{form:1}} \text{ for some } (a,b,c,d) \in \mathbb{C}^4\big\}\subset V.
$$
Let $G$ be the~subgroup in $\operatorname{Aut}\left((\mathbb{P}^1)^4\right)$ generated by the~following involutions:
\begin{align*}
\tau_1\colon \big((x_1:y_1),(x_2:y_2),(x_3:y_3),(x_4:y_4)\big)&\mapsto\big((x_2:y_2),(x_1:y_1),(x_4:y_4),(x_3:y_3)\big),\\
\tau_2\colon \big((x_1:y_1),(x_2:y_2),(x_3:y_3),(x_4:y_4)\big)&\mapsto\big((x_4:y_4),(x_3:y_3),(x_2:y_2),(x_1:y_1)\big),\\
\tau_3\colon \big((x_1:y_1),(x_2:y_2),(x_3:y_3),(x_4:y_4)\big)&\mapsto\big((y_1:x_1),(y_2:x_2),(y_3:x_3),(y_4:x_4)\big),\\
\tau_3\colon \big((x_1:y_1),(x_2:y_2),(x_3:y_3),(x_4:y_4)\big)&\mapsto\big(((x_1:-y_1),(x_2:-y_2),(x_3:-y_3),(x_4:-y_4)\big).
\end{align*}
Then $G\simeq(\mathbb{Z}/2\mathbb{Z})^4$, every form in $\mathcal{G}$ is $G$-invariant, $\mathcal{G}$ contains all $G$-invariant forms in $V$,
and $\mathcal{G}$ contains defining polynomials of all smooth divisors of degree $(1,1,1,1)$ in $(\mathbb{P}^1)^4$.

For sufficiently general form $f\in\mathcal{G}$, the~$G$-invariant divisor $\{f=0\}\subset (\mathbb{P}^1)^4$~is~smooth.
The following lemma describes reducible divisors $\{f=0\}$ for $f\in\mathcal{G}$.

\begin{lemma}
\label{lemma:reducible}
Let $f$ be a non-zero form in $\mathcal{G}$. Then
$$
f \mbox{ is reducible } \iff
\begin{cases}
a^2=b^2=c^2=d^2 \mbox{ or } \\
a=b=c=0 \mbox{ or } \\
a=b=d=0 \mbox{ or } \\
a=c=d=0 \mbox{ or } \\
b=c=d=0
\end{cases} \iff f\in \Gamma.(x_1x_2-y_1y_2)(x_3x_4-y_3y_4),
$$
where $(x_1x_2-y_1y_2)(x_3x_4-y_3y_4)$ is the~form \eqref{form:1} with $(a,b,c,d)=(0,0,0,1)$.
\end{lemma}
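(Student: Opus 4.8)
The plan is to establish the two biconditionals as a cycle of implications: reducibility $\Rightarrow$ the numerical conditions $\Rightarrow$ membership in $\Gamma.(x_1x_2-y_1y_2)(x_3x_4-y_3y_4)$ $\Rightarrow$ reducibility. Since every $f\in\mathcal{G}$ is multilinear, in any factorization $f=gh$ each of the four variable-pairs contributes its degree $1$ to exactly one of $g,h$; thus factorizations correspond to partitions $\{1,2,3,4\}=S\sqcup S^c$ into nonempty parts, and $f=g_S\,h_{S^c}$ with both factors nonconstant precisely when the flattening matrix $M_S$ (rows indexed by monomials in the variables of $S$, columns by those of $S^c$, entries the coefficients of $f$) has rank $1$. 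So I first record that $f$ is reducible if and only if one of the seven flattenings (the $2\times 8$ matrices for $|S|=1$ and the three $4\times 4$ matrices for $|S|=2$, the $14$ bipartitions pairing up under transpose) has rank $1$.

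The key simplification comes from $G$-invariance: every monomial occurring in $f$ has even total degree in the $y$-variables. Hence in each $2\times 8$ flattening $M_{\{k\}}$ the two rows, indexed by $x_k$ and $y_k$, have disjoint supports, so $M_{\{k\}}$ has rank $1$ only if one row vanishes, forcing $f=0$. This rules out linear factors and reduces the analysis to $M_{\{1,2\}},M_{\{1,3\}},M_{\{1,4\}}$. Writing these out, each becomes block-diagonal after reordering both rows and columns as $(00,11\mid 01,10)$, splitting into a $2\times 2$ corner block and a $2\times 2$ center block; for $M_{\{1,2\}}$ these blocks have determinants $ad$ and $bc$, for $M_{\{1,3\}}$ they are $\tfrac14\big((a+d)^2-(b+c)^2\big)$ and $\tfrac14\big((a-d)^2-(b-c)^2\big)$, and for $M_{\{1,4\}}$ the same with $b+c$ and $b-c$ interchanged. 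A block-diagonal matrix has rank $1$ iff one block is zero and the other has rank $1$; carrying this out shows $M_{\{1,2\}}$ has rank $1$ exactly on the four loci $\{a=b=c=0\},\{a=b=d=0\},\{a=c=d=0\},\{b=c=d=0\}$, while $M_{\{1,3\}}$ and $M_{\{1,4\}}$ together have rank $1$ exactly when $a^2=b^2=c^2=d^2\neq 0$. This gives the first biconditional, the vanishing case being excluded by $f\neq 0$.

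For the second biconditional I would work projectively. Since $\operatorname{SL}_2(\mathbb{C})\twoheadrightarrow\operatorname{PGL}_2(\mathbb{C})$, the group $\Gamma$ acts on $\mathbb{P}(V)$ as the full $\operatorname{Aut}\big((\mathbb{P}^1)^4\big)=\operatorname{PGL}_2(\mathbb{C})^4\rtimes\mathfrak{S}_4$, so I may permute the factors and apply an independent M\"obius transformation to each. In each of the five cases one checks by direct factorization that $f$ is a product of two nondegenerate $(1,1)$-forms on a $2$-$2$ bipartition: for instance $b=c=d=0$ gives $(x_1x_2+y_1y_2)(x_3x_4+y_3y_4)$, $a=c=d=0$ gives $(x_1y_2+y_1x_2)(x_3y_4+y_3x_4)$, and $a=b=c=d$ gives $(x_1x_3+y_1y_3)(x_2x_4+y_2y_4)$. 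After a permutation standardizing the bipartition to $\{1,2\}\mid\{3,4\}$ and M\"obius transformations diagonalizing each nondegenerate bilinear form to $x_ix_j-y_iy_j$, every such $f$ is carried, up to an irrelevant scalar, to $(x_1x_2-y_1y_2)(x_3x_4-y_3y_4)$. Conversely, $\Gamma$ acts by automorphisms of $(\mathbb{P}^1)^4$ and hence preserves reducibility, while the base form is visibly reducible; this closes the cycle.

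The block determinants and the explicit factorizations are routine. The one step needing genuine care, and the main obstacle, is the combinatorial bookkeeping that the rank-$1$ loci of the three $4\times 4$ flattenings reassemble into exactly the five listed conditions, with no omission and no extra components, together with checking in each case that both bilinear factors are nondegenerate (equivalently, that no linear factor intrudes): nondegeneracy is precisely what makes each factor $\operatorname{PGL}_2$-equivalent to $x_ix_j-y_iy_j$ and lets the orbit argument close.
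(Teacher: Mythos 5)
Your proposal is correct, and it is a complete argument. The paper's own proof of this lemma is literally the two words ``Direct computation,'' so there is nothing to compare it against step by step; what you have done is supply that computation in a well-organized form. Your three structural observations are exactly what makes the check clean and exhaustive: (i) multilinearity forces any factorization to split the four variable pairs, so reducibility is equivalent to some flattening having rank $1$; (ii) the parity of the $y$-degree (i.e.\ $G$-invariance of forms in $\mathcal{G}$) makes the two rows of each $2\times 8$ flattening have disjoint supports, killing linear factors, and makes each $4\times 4$ flattening block-diagonal in the $(00,11\mid 01,10)$ ordering; (iii) the block determinants are as you state ($ad$ and $bc$ for $M_{\{1,2\}}$; $\tfrac14\bigl((a+d)^2-(b+c)^2\bigr)$, $\tfrac14\bigl((a-d)^2-(b-c)^2\bigr)$ for $M_{\{1,3\}}$, with $b+c\leftrightarrow b-c$ for $M_{\{1,4\}}$). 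I verified the bookkeeping you flag as the main obstacle: $M_{\{1,2\}}$ has rank $1$ exactly on the four coordinate axes, and the rank-$1$ loci of $M_{\{1,3\}}$ and $M_{\{1,4\}}$ sweep out the four sign patterns $d=\pm a$, $c=\pm b$ with $a^2=b^2$, whose union is precisely $\{a^2=b^2=c^2=d^2\neq 0\}$, so the five listed conditions are recovered with no omissions or extra components.

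Two small points. First, in the $|S|=1$ step, ``one row vanishes, forcing $f=0$'' needs the one-line justification that the $x_k$-row and the $y_k$-row display the \emph{same} four coefficients $\tfrac{a\pm d}{2},\tfrac{b\pm c}{2}$ (in different columns), so the vanishing of either row already gives $a=b=c=d=0$; without this, a vanishing row would only say $f$ has a linear factor. Second, your instinct to work projectively is not just convenient but necessary: the orbit statement is false at the level of forms, since $H$ is a $\Gamma$-invariant of degree $2$ with $H\neq 0$ on $(x_1x_2-y_1y_2)(x_3x_4-y_3y_4)$, so $\lambda f_0\in\Gamma.f_0$ only for $\lambda=\pm 1$ (note also that $(a,b,c,d)=(0,0,0,1)$ actually yields $\tfrac12(x_1x_2-y_1y_2)(x_3x_4-y_3y_4)$). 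The lemma is thus correct only up to scalar, which is how the paper uses it (Proposition \ref{proposition:ChterentalDjokovic} concerns $\mathbb{P}(V)$), and your reading ``up to an irrelevant scalar'' is the right one.
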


\begin{proof}
Direct computation.
\end{proof}

To study the~GIT quotient $\mathbb{P}(V)^{\operatorname{ss}}\sslash \Gamma$, we need the~following lemma.

\begin{lemma}
\label{lemma:aut}
Let $f$ and $f'$ be non-zero forms in $\mathcal{G}$, let $X=\{ f=0\}$ and $X'=\{f'=0\}$.
Then $X$ and $X'$ are isomorphic if and only if $f$ and $f'$ lie in one $\Gamma$-orbit.
\end{lemma}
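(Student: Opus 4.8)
The plan is to prove both implications, the reverse one being immediate and the forward one resting on an intrinsic reconstruction of the four rulings of $(\mathbb{P}^1)^4$ from the abstract variety $X$. For the easy direction, suppose $f$ and $f'$ lie in one $\Gamma$-orbit, say $f'=\gamma\cdot f$ with $\gamma\in\Gamma$. Every element of $\Gamma=\operatorname{SL}_2(\mathbb{C})^4\rtimes\mathfrak{S}_4$ acts on $(\mathbb{P}^1)^4$ as an automorphism (the $\operatorname{SL}_2$-factors act through $\operatorname{PGL}_2^4$, and $\mathfrak{S}_4$ permutes the factors), and $\gamma$ carries $X=\{f=0\}$ onto $X'=\{\gamma\cdot f=0\}$, so $\gamma|_X\colon X\to X'$ is an isomorphism.

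Conversely, let $\phi\colon X\to X'$ be an isomorphism. If $f$ is reducible, then by Lemma~\ref{lemma:reducible} the divisor $X$ is $\Gamma$-equivalent to $\{(x_1x_2-y_1y_2)(x_3x_4-y_3y_4)=0\}$, a union of two smooth toric threefolds meeting along a smooth surface; then $X'$ is reducible with the same configuration, and matching up the finitely many irreducible components together with their pairwise intersections reduces the statement to an elementary direct verification on each component. So assume henceforth that $f$, and hence $f'$, is irreducible, so that $X$ is an irreducible $(1,1,1,1)$-divisor with $-K_X=\mathcal{O}(1,1,1,1)|_X$ and $\phi^*(-K_{X'})=-K_X$.

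The heart of the argument is to recover the four projections. Let $L_i=\pi_i^*\mathcal{O}_{\mathbb{P}^1}(1)|_X$ be the pullbacks of the four factor projections $\pi_i\colon(\mathbb{P}^1)^4\to\mathbb{P}^1$. One computes $-K_X=L_1+L_2+L_3+L_4$ and, in $N_1(X)$, the intersection numbers $L_iL_jL_k=1$ for distinct $i,j,k$ while $L_i^2=0$; thus each $L_i$ is a primitive nef class with $L_i^2=0$ and $h^0(X,L_i)=2$, and $\pi_i|_X\colon X\to\mathbb{P}^1$ is the fibration it defines, with general fibre a sextic del Pezzo surface. I claim $L_1,\dots,L_4$ are \emph{exactly} the primitive nef classes $D$ with $D^2=0$ and $h^0(D)=2$: writing a nef class as $\sum_i a_iL_i$ with $a_i\ge 0$ (the $L_i$ generate $N^1(X)$) and imposing $(\sum_i a_iL_i)^2\cdot L_k=0$ for every $k$ forces all but one coefficient to vanish, after which primitivity yields $D=L_i$. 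Since this characterization is intrinsic, $\phi^*L'_{\sigma(i)}=L_i$ for some $\sigma\in\mathfrak{S}_4$. For each $i$ the isomorphism $\phi$ then carries the pencil $|L_i|=\pi_i^*|\mathcal{O}_{\mathbb{P}^1}(1)|$ onto $|L'_{\sigma(i)}|$, inducing $g_i\in\operatorname{PGL}_2$ with $\pi_{\sigma(i)}\circ\phi=g_i\circ\pi_i$. As $(\pi_1,\dots,\pi_4)$ is precisely the embedding $X\hookrightarrow(\mathbb{P}^1)^4$, the data $(\sigma;g_1,\dots,g_4)$ assemble into $\gamma\in\operatorname{PGL}_2^4\rtimes\mathfrak{S}_4=\operatorname{Aut}\big((\mathbb{P}^1)^4\big)$ restricting to $\phi$; lifting each $g_i$ to $\operatorname{SL}_2$ realizes $\gamma\in\Gamma$. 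Then $\gamma(X)=X'$, so $\gamma\cdot f$ and $f'$ cut out the same divisor and agree up to scalar, placing $f$ and $f'$ in one $\Gamma$-orbit.

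The main obstacle is the intrinsic characterization of the classes $L_i$ uniformly across all irreducible members, including the singular ones: a priori the Picard rank or the nef cone of a singular $X$ could degenerate, so one must check that $N^1(X)$ remains generated by $L_1,\dots,L_4$ and that the intersection computation and the nef-cone analysis above persist (equivalently, treat the finitely many special singular members by hand). Once that is secured, the fact that $\phi$ automatically preserves $-K_X$ supplies the Picard-lattice isometry that matches the $L_i$, and the rest is bookkeeping with $\operatorname{Aut}\big((\mathbb{P}^1)^4\big)$.
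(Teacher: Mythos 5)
Your overall strategy coincides with the paper's: reduce to irreducible $X$ via Lemma~\ref{lemma:reducible}, recover the four tautological classes $L_i$ intrinsically from the abstract variety, and conclude that any isomorphism $X\simeq X'$ extends to an automorphism of $(\mathbb{P}^1)^4$. However, the step you defer as ``the main obstacle'' is not a loose end to be checked later --- it is the entire mathematical content of the lemma, and you leave it unproved. Concretely, your characterization of the $L_i$ rests on the parenthetical claim that every nef class can be written as $\sum_i a_iL_i$ with $a_i\geqslant 0$. Knowing that the $L_i$ generate $N^1(X)$ gives only that a nef class is a \emph{linear} combination of the $L_i$; nonnegativity of the coefficients is equivalent to saying that $\operatorname{Nef}(X)$ is exactly the simplicial cone spanned by $L_1,\dots,L_4$, which is precisely what has to be proved, uniformly over all irreducible (possibly singular) members. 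Without it, the ``intrinsic'' description of the $L_i$, and hence the matching $\phi^*L'_{\sigma(i)}=L_i$, has no foundation.

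The paper fills this gap in two short steps, neither of which requires smoothness of $X$ or any case analysis of the singular members. First, the restriction $\operatorname{Pic}\left((\mathbb{P}^1)^4\right)\to\operatorname{Pic}(X)$ is an isomorphism by the (Grothendieck--)Lefschetz hyperplane theorem, which applies to an effective ample divisor in a smooth projective fourfold; this settles generation of $N^1(X)$ for every irreducible member at once. Second, each projection $\operatorname{pr}^i\colon X\to(\mathbb{P}^1)^3$ forgetting the $i$-th factor is birational (the defining form is linear in $(x_i,y_i)$), so $\overline{\operatorname{NE}(X)}$ is generated by the $\operatorname{pr}^i$-exceptional curves, and dually $\operatorname{Nef}(X)$ is spanned by the $L_i$. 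In your notation this is the missing inequality: an exceptional curve $C_i$ of $\operatorname{pr}^i$ satisfies $L_j\cdot C_i=0$ for $j\neq i$ and $L_i\cdot C_i>0$, so pairing a nef class $\sum_j a_jL_j$ with $C_i$ forces $a_i\geqslant 0$. Once these two facts are inserted, your computation with $\left(\sum_i a_iL_i\right)^2\cdot L_k=0$ and the subsequent bookkeeping do complete the argument (and your extra condition $h^0(D)=2$ becomes superfluous). A last small point, shared with the paper's own write-up: from $\gamma(X)=X'$ one only gets that $\gamma\cdot f$ and $f'$ agree up to a nonzero scalar, so the conclusion is really about the induced $\Gamma$-action on $\mathbb{P}(V)$; since scalars cannot in general be absorbed into $\operatorname{SL}_2(\mathbb{C})^4$ (the invariant $H$ has degree $2$), the lemma should be read projectively.
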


\begin{proof}
By Lemma~\ref{lemma:reducible}, we may assume that both $X$ and $X'$ are irreducible.
In fact, they are normal by Theorem~\ref{theorem:sing}, but we do not need this.
Then the~restriction homomorphism
$$
\operatorname{Pic}\left((\mathbb{P}^1)^4\right)\to\operatorname{Pic}(X)
$$
is an~isomorphism by the~Lefschetz hyperplane section theorem.

For $i\in\{1,2,3,4\}$, let $\operatorname{pr}^i \colon X \to (\mathbb{P}^1)^3$ be the~projection to all but the~$i^{\mathrm{th}}$ factor~of~$(\mathbb{P}^1)^4$.
Then $\operatorname{pr}^i$ is birational, so the~Mori cone $\overline{\operatorname{NE}(X)}$ is spanned by all $\operatorname{pr}^i$-exceptional curves.
Since $\operatorname{Nef}(X)$ is the~dual cone of $\overline{\operatorname{NE}(X)}$, it is spanned by the~pullbacks
$$
\operatorname{pr}_1^*(\mathcal{O}_{\mathbb{P}^1}(1)),\quad \operatorname{pr}_2^*(\mathcal{O}_{\mathbb{P}^1}(1)), \quad \operatorname{pr}_3^*(\mathcal{O}_{\mathbb{P}^1}(1)),\quad  \operatorname{pr}_4^*(\mathcal{O}_{\mathbb{P}^1}(1)),
$$
where $\operatorname{pr}_i\colon X\to \mathbb{P}^1$ is the~projection to the~$i^{\mathrm{th}}$ factor of $(\mathbb{P}^1)^4$.
Using this, we see that any isomorphism $X\simeq X'$ is induced by an~automorphism of $(\mathbb{P}^1)^4$.
\end{proof}

We now consider the~action of $\Gamma_0$ and $\Gamma$ on forms in $\mathcal{G}$. For convenience, we set
$$
\begin{cases}
u_1= x_1x_2x_3x_4+ x_1x_2y_3y_4+y_1y_2x_3x_4+ y_1y_2y_3y_4,\\
u_2= x_1x_2x_3x_4- x_1x_2y_3y_4-y_1y_2x_3x_4+ y_1y_2y_3y_4,\\
u_3=x_1y_2x_3y_4+ x_1y_2y_3x_4+y_1x_2x_3y_4+ y_1x_2y_3x_4,\\
u_4= x_1y_2x_3y_4- x_1y_2y_3x_4-y_1x_2x_3y_4+ y_1x_2y_3x_4.
\end{cases}
$$
Then, we can rewrite the~form \eqref{form:1} in $\mathcal{G}$ as
$$
\frac{a}{2}u_1+\frac{d}{2}u_2+\frac{b}{2}u_3+\frac{c}{2}u_4.
$$
Let $\Gamma_0^\mathcal{G}$ and $\Gamma^\mathcal{G}$ be the~images in $\operatorname{GL}(\mathcal{G})$ of the~subgroups of the~groups $\Gamma_0$
and $\Gamma$ that leave the~subspace $\mathcal{G}$ invariant, respectively.
By \cite[Section~II]{GourWallach}, the~group $\Gamma_0^\mathcal{G}$ is generated~by
\begin{align*}
(u_1,u_2,u_3,u_4)&\mapsto (-u_1,-u_2,u_3,u_4),\\
(u_1,u_2,u_3,u_4)&\mapsto (u_{2},u_{1},u_3,u_4),\\
(u_1,u_2,u_3,u_4)&\mapsto (u_{2},u_{3},u_4,u_1),
\end{align*}
which implies that $\Gamma_0^\mathcal{G}\simeq\big(\mathbb{Z}/2\mathbb{Z}\big)^3\rtimes\mathfrak{S}_4$, and $\Gamma_0^\mathcal{G}$ is isomorphic to the~Weyl group $W(\mathrm{D}_4)$.
Similarly, the~group $\Gamma^\mathcal{G}$ is generated by $\Gamma_0^\mathcal{G}$, the~transformation
$$
(u_1,u_2,u_3,u_4)\mapsto (u_1,u_2,u_3,-u_4)
$$
and the~transformation
$$
(u_1,u_2,u_3,u_4)\mapsto \Big(\frac{u_1+u_2+u_3+u_4}{2},\frac{u_1+u_2-u_3-u_4}{2},\frac{u_1-u_2+u_3-u_4}{2},\frac{u_1-u_2-u_3+u_4}{2}\Big).
$$
The group $\Gamma^\mathcal{G}$ is isomorphic to the~Weyl group $W(\mathrm{F}_4)$, and its GAP ID is [1152,157478].
Since $-1\in \Gamma^\mathcal{G}$ acts trivially~on~$\mathbb{P}(\mathcal{G})$,
we obtain an~effective action of the~group
$$
\Gamma^\mathcal{G}/\langle -1\rangle\simeq W(\mathrm{F}_4)/\langle -1\rangle\simeq\big(\mathfrak{A}_4\times\mathfrak{A}_4\big)\rtimes\big(\mathbb{Z}/2\mathbb{Z}\big)^2
$$
on $\mathbb{P}(\mathcal{G})\simeq\mathbb{P}^3_{a,b,c,d}$. The GAP ID of the~group $W(\mathrm{F}_4)/\langle -1\rangle$ is [576,8654].

In the~following, we identify $\Gamma^\mathcal{G}=W(\mathrm{F}_4)$, $\Gamma^\mathcal{G}/\langle -1\rangle=W(\mathrm{F}_4)/\langle -1\rangle$ and $\mathbb{P}(\mathcal{G})=\mathbb{P}^3_{a,b,c,d}$.

\subsection{GIT moduli space}
\label{GIT}
Now, we recall some results on the~GIT quotient $\mathbb{P}(V)^{\operatorname{ss}}\sslash \Gamma$.
Recall~that $V=H^0\big((\mathbb{P}^1)^4, \mathcal{O}(1,1,1,1)\big)$,
and fix its basis: for $i,j,k,l\in\{0,1\}$, we let
$$
a_m=x_1^iy_1^{1-i}x_2^jy_2^{1-j}x_3^ky_3^{1-k}x_4^ly_4^{1-l}
$$
for $m=8i+4j+2k+l$. Therefore, for example, we have $a_0= x_1x_2x_3x_4$ and $a_3= x_1x_2y_3y_4$.
Let $W=V^\vee$, and let $\mathcal{S}$ be the~symmetric algebra of $W$.
Then it follows from \cite{LuqueThibon}~that the~$\Gamma_0$-invariant subring of $\mathcal S$ is $\mathcal{S}^{\Gamma_0}=\mathbb{C}[H,L,M,D]$,
where
$$
H=a_0a_{15}-a_1a_{14}-a_2a_{13}+a_3a_{12}-a_4a_{11}+a_5a_{10}+a_6a_9-a_7a_8,
$$
$$
L=\begin{vmatrix}
a_0 & a_4 & a_8 & a_{12}\\
a_1 & a_5 & a_9 & a_{13}\\
a_2 & a_6 & a_{10} & a_{14}\\
a_3 & a_7 & a_{11} & a_{15}
\end{vmatrix}, \quad
M=\begin{vmatrix}
a_0 & a_8 & a_2 & a_{10}\\
a_1 & a_9 & a_3 & a_{11}\\
a_4 & a_{12} & a_6 & a_{14}\\
a_5 & a_{13} & a_7 & a_{15}
\end{vmatrix}
$$
and
\begin{multline*}
D=\small{\begin{vmatrix}
a_0a_9-a_1a_8 & a_0a_{11}+a_2a_9-a_1a_{10}-a_3a_8 & a_2a_{11}-a_3a_{10}\\
a_0a_{13}+a_4a_9-a_1a_{12}-a_5a_8 &
\begin{matrix}(a_0a_{15}+a_2a_{13}+a_4a_{11}+a_6a_9\\
-a_1a_{14}-a_3a_{12}-a_5a_{10}-a_7a_8)\end{matrix} &
a_2a_{15}+a_6a_{11}-a_3a_{14}-a_7a_{10}\\
a_4a_{13}-a_5a_{12} & a_4a_{15}+a_6a_{13}-a_5a_{14}-a_7a_{12} &
a_6a_{15}-a_7a_{14}
\end{vmatrix}.}
\end{multline*}
To describe $\mathcal{S}^\Gamma=\left(\mathcal{S}^{\Gamma_0}\right)^{\mathfrak{S}_4}$, let $R, S$ and $T$ be the~forms of degrees $6$, $8$ and $12$ given by
\begin{equation}
\label{RST}
\begin{split}
R&=H(L-M)+3D, \\
S&=\frac{1}{12}H^4-\frac{2}{3}H^2L+\frac{2}{3}H^2M-2HD+\frac{4}{3}(L^2+LM+M^2), \\
T&=\frac{1}{216}H^6-\frac{1}{18}H^4(L-M)-\frac{1}{6}H^3D+\frac{1}{9}H^2(2L^2-LM+2M^2)\\
&\quad\quad+\frac{2}{3}H(L-M)D-\frac{8}{27}(L^3-M^3)-\frac{4}{9}LM(L-M)+D^2.
\end{split}
\end{equation}
Observe that $H$, $R$, $S$, $T$ are algebraically independent.

\begin{proposition}[\cite{Schlafli,ChterentalDjokovic,GourWallach}]\label{proposition:HRST}
One has $\mathcal{S}^\Gamma=\mathbb{C}[H,R,S,T]$,
so that
$$
\mathbb{P}(V)^{\operatorname{ss}}\sslash \Gamma\simeq\mathbb{P}(1,3,4,6).
$$
\end{proposition}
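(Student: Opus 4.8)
The plan is to reduce the computation of $\mathcal{S}^\Gamma$ to the invariant theory of a finite reflection group by restricting invariants to the subspace $\mathcal{G}$, which serves as a Cartan-type slice. Since $\Gamma=\Gamma_0\rtimes\mathfrak{S}_4$, one has $\mathcal{S}^\Gamma=(\mathcal{S}^{\Gamma_0})^{\mathfrak{S}_4}=\mathbb{C}[H,L,M,D]^{\mathfrak{S}_4}$, so the task is to compute the $\mathfrak{S}_4$-invariants of the four-qubit invariant ring. I would avoid doing this directly, because $\mathfrak{S}_4$ acts on $\mathbb{C}[H,L,M,D]$ by graded algebra automorphisms that are not linear on the chosen generators (for instance $D$ is sent to $D$ plus a multiple of $H(L-M)$), which makes a Molien-style count awkward. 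Instead I would exploit the \emph{linear} action of the Weyl groups on $\mathcal{G}\cong\mathbb{C}^4_{a,b,c,d}$ already identified in the excerpt.

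First I would analyze the graded restriction homomorphism $\mathrm{res}\colon\mathcal{S}=\operatorname{Sym}(V^\vee)\to\mathbb{C}[\mathcal{G}]=\mathbb{C}[a,b,c,d]$, where each $a_m$ restricts to a linear form in $a,b,c,d$. By Corollary~\ref{corollary:smooth} every form with smooth zero locus lies in $\Gamma_0.\mathcal{G}$; as such forms are Zariski dense in $V$, the orbit $\Gamma_0.\mathcal{G}$ is dense, so any $\Gamma_0$-invariant vanishing on $\mathcal{G}$ vanishes identically. Hence $\mathrm{res}$ is injective on $\mathcal{S}^{\Gamma_0}$, with image inside $\mathbb{C}[a,b,c,d]^{\Gamma_0^{\mathcal{G}}}=\mathbb{C}[a,b,c,d]^{W(\mathrm{D}_4)}$. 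Now $\mathcal{S}^{\Gamma_0}=\mathbb{C}[H,L,M,D]$ is free on generators of degrees $2,4,4,6$, while by Chevalley's theorem $\mathbb{C}[a,b,c,d]^{W(\mathrm{D}_4)}$ is free on generators of the fundamental degrees $2,4,4,6$ of $\mathrm{D}_4$; the two Hilbert series coincide, so the injective graded map $\mathrm{res}$ is an isomorphism.

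Next I would upgrade this to an equivariant statement. The $\Gamma$-action on $V$ descends to $V\sslash\Gamma_0=\operatorname{Spec}\mathcal{S}^{\Gamma_0}$, and under the above isomorphism the residual action of $\Gamma/\Gamma_0=\mathfrak{S}_4$ corresponds to that of $\Gamma^{\mathcal{G}}/\Gamma_0^{\mathcal{G}}=W(\mathrm{F}_4)/W(\mathrm{D}_4)\cong\mathfrak{S}_3$ on $\mathcal{G}\sslash W(\mathrm{D}_4)$, the map $\mathfrak{S}_4\twoheadrightarrow\mathfrak{S}_3$ being the natural one with kernel the Klein four-group. Taking invariants yields
$$
\mathcal{S}^\Gamma=\big(\mathcal{S}^{\Gamma_0}\big)^{\mathfrak{S}_4}\cong\big(\mathbb{C}[a,b,c,d]^{W(\mathrm{D}_4)}\big)^{\mathfrak{S}_3}=\mathbb{C}[a,b,c,d]^{W(\mathrm{F}_4)},
$$
which by Chevalley is a polynomial ring on generators of the fundamental degrees $2,6,8,12$ of $\mathrm{F}_4$. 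Since $H,R,S,T$ have exactly these degrees and are algebraically independent, $\mathbb{C}[H,R,S,T]\subseteq\mathcal{S}^\Gamma$ is an inclusion of polynomial subrings with identical Hilbert series $\prod_{d\in\{2,6,8,12\}}(1-t^d)^{-1}$, which forces equality in each degree; thus $\mathcal{S}^\Gamma=\mathbb{C}[H,R,S,T]$. The quotient statement follows, since $\mathbb{P}(V)^{\operatorname{ss}}\sslash\Gamma=\operatorname{Proj}\mathcal{S}^\Gamma=\operatorname{Proj}\mathbb{C}[H,R,S,T]=\mathbb{P}(2,6,8,12)\cong\mathbb{P}(1,3,4,6)$.

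I expect the main obstacle to be the equivariant restriction theorem of the last two paragraphs: that restriction to $\mathcal{G}$ identifies $\Gamma_0$-invariants with $W(\mathrm{D}_4)$-invariants and intertwines the residual $\mathfrak{S}_4$- and $\mathfrak{S}_3$-actions. Injectivity is cheap (density of $\Gamma_0.\mathcal{G}$), but surjectivity rests on the precise coincidence of the generator degrees of the four-qubit invariant ring with the fundamental degrees of $\mathrm{D}_4$, together with the identifications $\Gamma_0^{\mathcal{G}}\cong W(\mathrm{D}_4)$ and $\Gamma^{\mathcal{G}}\cong W(\mathrm{F}_4)$ supplied in the excerpt. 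Checking the equivariance itself --- equivalently, that the Klein four-group acts trivially on $\mathbb{C}[H,L,M,D]$ while $\mathfrak{S}_4$ acts through $\mathfrak{S}_3$ permuting the three degree-four invariants $L,M,N$ (with $L+M+N$ a multiple of $H^2$) --- is the one point where some explicit computation with the listed generators of $\Gamma_0^{\mathcal{G}}$ and $\Gamma^{\mathcal{G}}$ cannot be wholly avoided.
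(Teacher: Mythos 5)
Your proposal is correct in substance but takes a genuinely different route from the paper's proof. The paper never leaves the ring $\mathcal{S}^{\Gamma_0}=\mathbb{C}[H,L,M,D]$: it writes $\mathfrak{S}_4=(\mathbb{Z}/2\mathbb{Z})^2\rtimes\mathfrak{S}_3$, checks by direct computation that the Klein four-group fixes $L$, $M$, $D$ and that $H$, $R$ are $\mathfrak{S}_4$-invariant, records the residual \emph{signed} linear $\mathfrak{S}_3$-action on the plane spanned by $L$ and $M$, and computes $\mathbb{C}[L,M]^{\mathfrak{S}_3}$ by Molien's formula, obtaining the generators $L^2+LM+M^2$ and $2L^3+3L^2M-3LM^2-2M^3$, which are then rewritten as $S$ and $T$. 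You instead descend to the slice $\mathcal{G}$: injectivity of restriction via density of $\Gamma_0.\mathcal{G}$ (using Corollary~\ref{corollary:smooth}), surjectivity onto $\mathbb{C}[a,b,c,d]^{W(\mathrm{D}_4)}$ by comparing Hilbert series (the Luque--Thibon degrees $2,4,4,6$ against the fundamental degrees of $\mathrm{D}_4$), and then Chevalley's theorem for $W(\mathrm{F}_4)$ to get a free ring on degrees $2,6,8,12$. This is in effect the Chevalley restriction theorem for the Vinberg pair underlying four qubits; it explains conceptually why $W(\mathrm{F}_4)$, and hence $\mathbb{P}(1,3,4,6)$, must appear, and it reproves rather than assumes the restriction isomorphism. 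What the paper's more pedestrian computation buys is the explicit expression of the generators $S$, $T$ in terms of $L$, $M$, which is needed later (e.g.\ in Theorem~\ref{theorem:sing} and Proposition~\ref{proposition:explicit}); your argument identifies the abstract structure of $\mathcal{S}^\Gamma$ but by itself would not produce formula \eqref{RST}.

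Two caveats, neither fatal. First, your inclusion $\mathbb{C}[H,R,S,T]\subseteq\mathcal{S}^\Gamma$ presupposes that $R$, $S$, $T$ are $\mathfrak{S}_4$-invariant (only $\Gamma_0$-invariance is clear from their definitions); this is not stated before the proposition and is precisely one of the paper's ``direct checks,'' so it must be added to your list of deferred computations --- alternatively it can be verified on the slice, using the injectivity and equivariance of restriction that you establish. Second, your closing parenthetical misdescribes the residual action: by the paper's table one has, e.g., $L^{(1,2)}=-L$ and $L^{(1,3)}=M-L$, so $\mathfrak{S}_3$ acts by \emph{signed} permutations of the three lines spanned by $L$, $M$, $L-M$; there is no triple of invariants honestly permuted whose sum is a multiple of $H^2$ (had the action been an unsigned permutation, $\mathbb{C}[L,M]^{\mathfrak{S}_3}$ would contain an invariant of degree $4$, contradicting the $\mathrm{F}_4$ degrees $2,6,8,12$). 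This slip does not affect the architecture of your argument, which only uses that $\Gamma_0^{\mathcal{G}}$ and $\Gamma^{\mathcal{G}}$ act on $\mathcal{G}$ as the reflection groups $W(\mathrm{D}_4)\subset W(\mathrm{F}_4)$ and that the image of $\mathfrak{S}_4$ in $\Gamma^{\mathcal{G}}/\Gamma_0^{\mathcal{G}}$ is everything, all of which the explicit generators listed in the paper supply.
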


\begin{proof}
Let us give a~proof for readers' convenience.
Under the~natural embedding
$$
\mathfrak{S}_3=\{\operatorname{id}, (1,2,3), (1,3,2), (1,2), (1,3), (2,3)\}\subset\mathfrak{S}_4,
$$
we have $\mathfrak{S}_4=(\mathbb{Z}/2\mathbb{Z})^2\rtimes\mathfrak{S}_3$, where $(\mathbb{Z}/2\mathbb{Z})^2\triangleleft \mathfrak{S}_4$
is the~Klein four-group
$$
\{\operatorname{id}, (1,2)(3,4), (1,3)(2,4), (1,4)(2,3)\}.
$$
We can directly check the~following:
\begin{itemize}
\item
We have $H^\sigma=H$ and $R^\sigma=R$ for every $\sigma\in\mathfrak{S}_4$.
\item
We have $L^\sigma=L$, $M^\sigma=M$ and $D^\sigma=D$ for every $\sigma\in(\mathbb{Z}/2\mathbb{Z})^2$.
\end{itemize}
Thus, we have $\left(\mathcal{S}^{\Gamma_0}\right)^{(\mathbb{Z}/2\mathbb{Z})^2}=\mathbb{C}[H,L,M,D]$ and $\mathcal{S}^\Gamma=\left(\mathcal{S}^{\Gamma_0}\right)^{\mathfrak{S}_3}$.

Observe that the~subring $\mathbb{C}[L,M]\subset\mathbb{C}[H,L,M,D]$ is $\mathfrak{S}_3$-invariant, since
$$
\begin{matrix}
L^{(1,2)}=-L, & L^{(1,3)}=M-L, & L^{(2,3)}=-M, & L^{(1,2,3)}=L-M, & L^{(1,3,2)}=M,\\
M^{(1,2)}=M-L, & M^{(1,3)}=-M, & M^{(2,3)}=-L, & M^{(1,2,3)}=L, & M^{(1,3,2)}=L-M.
\end{matrix}
$$
Note that the~polynomials $L^2+LM+M^2$ and $2L^3+3L^2M-3LM^2-2M^3$ are $\mathfrak{S}_3$-invariant.
Moreover, the~Hilbert polynomial of the~invariant ring $\mathbb{C}[L,M]^{\mathfrak{S}_3}$ is equal to
$$
\frac{1}{6}\left(\frac{1}{(1-t)^2}+\frac{3}{1-t^2}+\frac{2}{1+t+t^2}\right)=\frac{1}{(1-t^2)(1-t^3)}
$$
by Molien's formula, where we take the~degrees of $L$ and $M$ to be $1$. This gives
$$
\mathbb{C}[L,M]^{\mathfrak{S}_3}=\mathbb{C}\left[L^2+LM+M^2, 2L^3+3L^2M-3LM^2-2M^3\right],
$$
because $L^2+LM+M^2$ and $2L^3+3L^2M-3LM^2-2M^3$ are algebraically independent. This shows that
$$
\mathbb{C}[H,L,M,D]^{\mathfrak{S}_3}=\mathbb{C}\left[H,R,L^2+LM+M^2,2L^3+3L^2M-3LM^2-2M^3\right].
$$
On the~other hand, the~equality
$$
\mathbb{C}\left[H,R,L^2+LM+M^2,2L^3+3L^2M-3LM^2-2M^3\right]=\mathbb{C}[H,R,S,T]
$$
is trivial from the~definitions of $S$ and $T$.
\end{proof}

Let $\phi\colon\mathbb{P}(V)^{\operatorname{ss}}\to\mathbb{P}(V)^{\operatorname{ss}}\sslash\Gamma$ be the~GIT quotient morphism.
Then $\mathbb{P}(\mathcal{G})\subset \mathbb{P}(V)^{\operatorname{ss}}$ by

\begin{proposition}[{\cite{ChterentalDjokovic}}]
\label{proposition:ChterentalDjokovic}
The subset $\Gamma.\mathbb{P}(\mathcal{G})\subset\mathbb{P}(V)$ consists of all GIT polystable points with respects to the~action $\Gamma\curvearrowright V$.
\end{proposition}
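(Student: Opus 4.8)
\emph{Plan.} The plan is to prove the two inclusions separately, using the Kempf--Ness description of polystability. Fix the standard Hermitian inner product on $V=(\mathbb{C}^2)^{\otimes 4}$ for which $K=\operatorname{SU}(2)^4$ is the maximal compact subgroup of $\Gamma_0$, and let $\mu\colon V\to\mathfrak{k}=\bigoplus_{i=1}^4\mathfrak{su}(2)$ be the associated moment map (up to normalisation, its $i$-th component is the traceless part of the $i$-th single--qubit reduced density matrix). By Kempf--Ness, for $f\neq 0$ the condition $\mu(f)=0$ means that $f$ is a minimal vector, so that $\Gamma_0.f$ is closed in $V$ and $[f]$ is $\Gamma_0$-polystable; conversely every polystable orbit contains a point with $\mu=0$. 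Since $\Gamma=\Gamma_0\rtimes\mathfrak{S}_4$ with $\mathfrak{S}_4$ finite and normalising $\Gamma_0$, a $\Gamma$-orbit is a finite union of $\Gamma_0$-orbits permuted among themselves, so it is closed exactly when the $\Gamma_0$-orbits are; thus $\Gamma_0$- and $\Gamma$-polystability coincide, and it suffices to analyse the condition $\mu(f)=0$.

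For the inclusion $\Gamma.\mathbb{P}(\mathcal{G})\subseteq\{\text{polystable}\}$ I would show that $\mu(f)=0$ for every nonzero $f\in\mathcal{G}$ by a symmetry argument, avoiding any direct density--matrix computation. Every form in $\mathcal{G}$ is invariant, as a divisor, under the involutions $(x_i{:}y_i)\mapsto(y_i{:}x_i)$ and $(x_i{:}y_i)\mapsto(x_i{:}-y_i)$; these lift to the elements $g_1=(iX)^{\otimes 4}$ and $g_2=(iZ)^{\otimes 4}$ of $K$, so that $g_j.f=\zeta_j f$ for unit scalars $\zeta_j$. Because $\mu$ is $K$-equivariant and invariant under rescaling by unit complex numbers, $\mu(f)$ is fixed by $\operatorname{Ad}_{g_1}$ and $\operatorname{Ad}_{g_2}$. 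On each factor $\mathfrak{su}(2)=\langle iX,iY,iZ\rangle$ the automorphism $\operatorname{Ad}_{iX}$ negates the $iY,iZ$ coordinates while $\operatorname{Ad}_{iZ}$ negates the $iX,iY$ coordinates, so their common fixed subspace is $0$; hence $\mu(f)=0$ and $[f]$ is polystable. Translating by $\Gamma$ shows that every point of $\Gamma.\mathbb{P}(\mathcal{G})$ is polystable.

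For the reverse inclusion I would use the fundamental property that the GIT quotient separates closed orbits: each fibre of $\phi$ contains a unique closed orbit, so two polystable points with the same invariants lie in one $\Gamma$-orbit. Given a polystable $[v]$, it therefore suffices to produce $f\in\mathcal{G}$ with the same image in $\mathbb{P}(V)^{\operatorname{ss}}\sslash\Gamma\cong\mathbb{P}(1,3,4,6)$; by the previous paragraph $[f]$ is then polystable and $\Gamma$-equivalent to $[v]$, whence $[v]\in\Gamma.\mathbb{P}(\mathcal{G})$. Such an $f$ exists as soon as the restriction $\mathbb{P}(\mathcal{G})\to\mathbb{P}(V)^{\operatorname{ss}}\sslash\Gamma$ is surjective, which is the Chevalley restriction statement for this polar representation: the restrictions to $\mathcal{G}$ of the generators $H,R,S,T$ of Proposition~\ref{proposition:HRST} are $\Gamma^{\mathcal{G}}=W(\mathrm{F}_4)$-invariant polynomials in $a,b,c,d$ of degrees $2,6,8,12$, exactly the fundamental degrees of $W(\mathrm{F}_4)$. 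Checking that they are algebraically independent (equivalently nonzero, using that $\Gamma_0.\mathcal{G}$ is dense in $V$ so restriction is injective on $\mathcal{S}^\Gamma$) identifies $\mathbb{C}[\mathcal{G}]^{W(\mathrm{F}_4)}=\mathbb{C}[H|_{\mathcal{G}},R|_{\mathcal{G}},S|_{\mathcal{G}},T|_{\mathcal{G}}]$, so that $\mathbb{C}[\mathcal{G}]$ is finite over the image of $\mathcal{S}^\Gamma$ and the induced map is finite and surjective.

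The main obstacle is precisely this surjectivity. Everything else is formal: the first inclusion reduces to the one-line observation that $\langle\operatorname{Ad}_{iX},\operatorname{Ad}_{iZ}\rangle$ has no nonzero fixed vector on $\mathfrak{su}(2)$, and the passage from ``same invariants'' to ``same orbit'' is the separation of closed orbits by the quotient. It is the degree bookkeeping together with the algebraic independence of $H|_{\mathcal{G}},R|_{\mathcal{G}},S|_{\mathcal{G}},T|_{\mathcal{G}}$ --- i.e.\ the Chevalley restriction isomorphism matching the $W(\mathrm{F}_4)$ (and, over $\Gamma_0$, the $W(\mathrm{D}_4)$, of degrees $2,4,4,6$) invariants --- that is the one genuinely computational input.
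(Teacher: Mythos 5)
Your proof is correct, but there is nothing in the paper to compare it against: the paper does not prove this proposition, it imports it wholesale from \cite{ChterentalDjokovic}. So what you have written is a genuine, self-contained proof of a result the authors treat as a black box. Your route --- Kempf--Ness for the inclusion $\Gamma.\mathbb{P}(\mathcal{G})\subseteq\{\text{polystable points}\}$, then surjectivity of $\phi|_{\mathbb{P}(\mathcal{G})}$ together with separation of closed orbits for the reverse inclusion --- is in effect a hands-on reproof, in this special case, of the structure theory underlying the cited reference: $(\mathbb{C}^2)^{\otimes 4}$ under $\operatorname{SL}_2(\mathbb{C})^4$ is the isotropy (Vinberg $\theta$-group) representation of $\mathrm{SO}_{4,4}/(\mathrm{SO}_4\times\mathrm{SO}_4)$, with Cartan subspace $\mathcal{G}$ and little Weyl group $W(\mathrm{F}_4)$, for which ``closed orbits are exactly the orbits meeting the Cartan subspace'' is the Kostant--Rallis/Dadok--Kac statement. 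Your two genuinely nice points are the symmetry argument for the moment map (since every $f\in\mathcal{G}$ is fixed by $(iX)^{\otimes 4}$ and $(iZ)^{\otimes 4}$, and the common fixed space of $\operatorname{Ad}_{iX},\operatorname{Ad}_{iZ}$ on $\mathfrak{su}(2)$ is zero, one gets $\mu|_{\mathcal{G}}\equiv 0$ without computing any reduced density matrices), and the reduction of $\Gamma$-polystability to $\Gamma_0$-polystability via finiteness of $\mathfrak{S}_4$; both are correct and correctly used.

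Three remarks on the second half. First, the parenthetical ``equivalently nonzero'' is garbled: algebraic independence of the restrictions is not equivalent to their being nonzero. What your argument actually gives, and what you need, is that the restriction homomorphism $\mathcal{S}^\Gamma\to\mathbb{C}[\mathcal{G}]$ is injective because $\Gamma_0.\mathcal{G}$ contains the dense open locus of forms with smooth zero divisor (Corollary~\ref{corollary:smooth}, which is a logically prior input, so no circularity); injectivity then transfers the algebraic independence of $H,R,S,T$ to their restrictions. Second, the step ``algebraically independent invariants whose degrees are the fundamental degrees $2,6,8,12$ must generate $\mathbb{C}[\mathcal{G}]^{W(\mathrm{F}_4)}$'' is true but deserves its one-line justification: both graded rings have Hilbert series $\prod_i(1-t^{d_i})^{-1}$, and a graded subring with the same Hilbert series as the ambient ring equals it; this also silently uses that $\Gamma^{\mathcal{G}}$ acts on $\mathcal{G}$ as the reflection representation of $W(\mathrm{F}_4)$, which the paper's explicit generators confirm. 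Third, all of this invariant theory can be bypassed: once the first inclusion gives $\mathbb{P}(\mathcal{G})\subset\mathbb{P}(V)^{\operatorname{ss}}$, the image $\phi\bigl(\mathbb{P}(\mathcal{G})\bigr)$ is closed, being the continuous image of a complete variety, and by Corollary~\ref{corollary:smooth} it contains the image under $\phi$ of the dense open locus of smooth divisors, hence is dense; therefore $\phi|_{\mathbb{P}(\mathcal{G})}$ is surjective, and your closed-orbit separation argument concludes. That shortcut makes the reverse inclusion essentially formal and leaves the Kempf--Ness symmetry argument as the only real content.
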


Thus, we can consider the~restriction of $\phi$ to $\mathbb{P}(\mathcal{G})$:
$$
\phi\big|_{\mathbb{P}(\mathcal{G})}\colon\mathbb{P}(\mathcal{G})=\mathbb{P}^3_{a,b,c,d}\to\mathbb{P}(V)^{\operatorname{ss}}\sslash\Gamma\simeq\mathbb{P}(1,3,4,6).
$$
In the~following, we identify $\mathbb{P}(V)^{\operatorname{ss}}\sslash\Gamma=\mathbb{P}(1,3,4,6)$.

\begin{proposition}
\label{proposition:explicit}
The restriction $\phi|_{\mathbb{P}(\mathcal{G})}$ is the~quotient of $\mathbb{P}(\mathcal{G})$ by the~group $W(\mathrm{F}_4)/\langle -1\rangle$,
which is the~composition $\phi_3\circ\phi_2\circ\phi_1$, where
$\phi_1\colon{\mathbb{P}}^3\to\mathbb{P}^3$ is a~degree $8$ morphism given by
$$
(a:b:c:d)\mapsto\big(a^2:b^2:c^2:d^2\big),
$$
the morphism $\phi_2\colon\mathbb{P}^3\to\mathbb{P}(1,2,3,4)$ is a~morphism of degree $24$ given by
\begin{multline*}
(p_1:p_2:p_3:p_4)\mapsto\bigl(p_1+p_2+p_3+p_4:p_1p_2+p_1p_3+p_1p_4+p_2p_3+p_2p_4+p_3p_4:\\
:p_2p_3p_4+p_1p_3p_4+p_1p_2p_4+p_1p_2p_3:p_1p_2p_3p_4\bigr),
\end{multline*}
and $\phi_3\colon\mathbb{P}(1,2,3,4)\to\mathbb{P}(1,3,4,6)$ is a~morphism of degree $3$ given by
\begin{multline*}
(s_1:s_2:s_3:s_4)\mapsto\bigg(\frac{1}{2}s_1:\frac{1}{32}\left(s_1^3-4s_1s_2+24s_3\right):\frac{1}{12}\left(12s_4+s_2^2-3s_1s_3\right):\\
:\frac{1}{432}\left(27s_1^2s_4-72s_2s_4+2s_2^3-9s_1s_2s_3+27s_3^2\right)\bigg)
\end{multline*}
where $s_1,s_2,s_3,s_4$ are coordinates on $\mathbb{P}(1,2,3,4)$ of weights $1$, $2$, $3$, $4$, respectively.
\end{proposition}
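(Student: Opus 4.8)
The plan is to describe $\phi|_{\mathbb{P}(\mathcal{G})}$ by its coordinate functions and then match these with the explicit composition. By Proposition~\ref{proposition:HRST} the target is $\mathbb{P}(V)^{\operatorname{ss}}\sslash\Gamma=\operatorname{Proj}\mathcal{S}^{\Gamma}=\operatorname{Proj}\mathbb{C}[H,R,S,T]=\mathbb{P}(1,3,4,6)$, where $H,R,S,T$ have weights $1,3,4,6$ (half their degrees $2,6,8,12$ in the~$a_m$). Hence $\phi|_{\mathbb{P}(\mathcal{G})}$ is given by
$$
(a:b:c:d)\longmapsto\big(H|_{\mathcal{G}}:R|_{\mathcal{G}}:S|_{\mathcal{G}}:T|_{\mathcal{G}}\big),
$$
where $H|_{\mathcal{G}},\dots,T|_{\mathcal{G}}$ are the~restrictions of the~generators of $\mathcal{S}^{\Gamma}$ to $\mathcal{G}$, read as polynomials in $a,b,c,d$ via the~relations $a_0=a_{15}=\tfrac{a+d}{2}$, $a_3=a_{12}=\tfrac{a-d}{2}$, $a_5=a_{10}=\tfrac{b+c}{2}$, $a_6=a_9=\tfrac{b-c}{2}$ and $a_m=0$ otherwise, coming from \eqref{form:1}. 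Moreover $\phi|_{\mathbb{P}(\mathcal{G})}$ is a~priori invariant under $W(\mathrm{F}_4)/\langle-1\rangle$: if $\gamma\in\Gamma$ preserves $\mathcal{G}$ and $f\in\mathcal{G}$, then $\gamma f\in\mathcal{G}$ and $\phi(\gamma f)=\phi(f)$ by $\Gamma$-invariance of $\phi$.

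First I would compute the~four $\Gamma_0$-invariants. Substituting the~values above collapses the~determinants into small blocks, yielding $H|_{\mathcal{G}}=\tfrac12(a^2+b^2+c^2+d^2)$, $L|_{\mathcal{G}}=abcd$,
$$
M|_{\mathcal{G}}=-\tfrac1{16}\big((a^2+d^2-b^2-c^2)^2-4(ad-bc)^2\big),\qquad D|_{\mathcal{G}}=\tfrac14(ad-bc)(bd-ac)(ab-cd).
$$

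Next I would substitute these into the~formulas \eqref{RST} for $R,S,T$. The~key point is that, although $L,M,D$ are individually invariant only under even sign changes, the~combinations $R,S,T$ are $\Gamma$-invariant, so $R|_{\mathcal{G}},S|_{\mathcal{G}},T|_{\mathcal{G}}$ must be symmetric polynomials in $a^2,b^2,c^2,d^2$; concretely, every monomial odd in some variable (the~$abcd$-type terms) cancels. Writing $e_1,\dots,e_4$ for the~elementary symmetric functions of $a^2,b^2,c^2,d^2$, one obtains $H|_{\mathcal{G}}=\tfrac12 e_1$, $R|_{\mathcal{G}}=\tfrac1{32}(e_1^3-4e_1e_2+24e_3)$, $S|_{\mathcal{G}}=\tfrac1{12}(12e_4+e_2^2-3e_1e_3)$ and $T|_{\mathcal{G}}=\tfrac1{432}(27e_1^2e_4-72e_2e_4+2e_2^3-9e_1e_2e_3+27e_3^2)$. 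Since $\phi_1$ sends $(a:b:c:d)$ to $(a^2:b^2:c^2:d^2)$ and $\phi_2$ sends these to $(e_1:e_2:e_3:e_4)$, these identities are precisely the~assertion $\phi|_{\mathbb{P}(\mathcal{G})}=\phi_3\circ\phi_2\circ\phi_1$. To finish, note that $\phi_1$ is the~quotient by the~sign-change subgroup $(\mathbb{Z}/2\mathbb{Z})^3\subset W(\mathrm{F}_4)/\langle-1\rangle$, with invariant ring $\mathbb{C}[a^2,b^2,c^2,d^2]$, of degree $8$; that $\phi_2$ is the~quotient of $\mathbb{P}^3$ by $\mathfrak{S}_4$, with invariant ring $\mathbb{C}[e_1,e_2,e_3,e_4]$, giving $\mathbb{P}^3/\mathfrak{S}_4\simeq\mathbb{P}(1,2,3,4)$ of degree $24$; and that $\phi_3$ has degree $3$ (a~general fibre is cut out by a~cubic in $s_2$ after eliminating $s_3,s_4$ from $R,S,T$). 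As $8\cdot24\cdot3=576=\big|W(\mathrm{F}_4)/\langle-1\rangle\big|$, the~finite surjection $\mathbb{P}(\mathcal{G})/(W(\mathrm{F}_4)/\langle-1\rangle)\to\mathbb{P}(1,3,4,6)$ induced by the~invariance above has degree $1$, hence is an~isomorphism by normality (Zariski's main theorem); this identifies $\phi|_{\mathbb{P}(\mathcal{G})}$ with the~quotient map.

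The~hard part is the~third step: carrying out the~degree-$8$ and degree-$12$ symmetric-function identities for $S|_{\mathcal{G}}$ and especially $T|_{\mathcal{G}}$, where one expands $D^2$, $H^2(L-M)$, $HD$ and the~pure $(L,M)$-terms of \eqref{RST} and checks that all non-sign-invariant monomials cancel, leaving exactly the~stated polynomials in $e_1,\dots,e_4$. The~degree-$6$ case already exhibits the~mechanism: the~term $\tfrac34 e_1\,abcd$ produced by $H\cdot L$ is cancelled precisely by $-\tfrac34 e_1\,abcd$ inside $3D$, after which $\tfrac1{32}e_1(a^2+d^2-b^2-c^2)^2$ and $-\tfrac18 e_1(a^2d^2+b^2c^2)$ combine to $\tfrac1{32}e_1^3-\tfrac18 e_1e_2$ and the~remaining terms give $\tfrac34 e_3$. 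A~secondary point to treat carefully is the~group theory behind the~factorisation, namely that $W(\mathrm{F}_4)/\langle-1\rangle$ contains $(\mathbb{Z}/2\mathbb{Z})^3\rtimes\mathfrak{S}_4$ (order $192$) with index $3$, so that the~three stages genuinely account for the~full order $576$.
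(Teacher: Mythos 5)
Your proposal is correct and amounts to the same argument the paper compresses into ``direct computations'': restrict $H,L,M,D$ to $\mathcal{G}$ (your values $H|_{\mathcal{G}}=\tfrac12e_1$, $L|_{\mathcal{G}}=abcd$, $M|_{\mathcal{G}}=-\tfrac1{16}\big((a^2+d^2-b^2-c^2)^2-4(ad-bc)^2\big)$, $D|_{\mathcal{G}}=\tfrac14(ad-bc)(bd-ac)(ab-cd)$ are all correct), substitute into \eqref{RST} to obtain the stated symmetric functions of $a^2,b^2,c^2,d^2$ matching $\phi_3\circ\phi_2\circ\phi_1$, and then use the degree count $8\cdot24\cdot3=576=\big|W(\mathrm{F}_4)/\langle-1\rangle\big|$ together with Zariski's main theorem to identify $\phi|_{\mathbb{P}(\mathcal{G})}$ with the quotient map. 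One cosmetic slip: in your degree-$6$ check the monomial $\tfrac34e_1\,abcd$ comes from all of $H(L-M)$ (namely $\tfrac12e_1\,abcd$ from $HL$ and $\tfrac14e_1\,abcd$ from $-HM$), not from $H\cdot L$ alone, but the cancellation against $3D$ and the resulting identity $R|_{\mathcal{G}}=\tfrac1{32}(e_1^3-4e_1e_2+24e_3)$ are exactly as you state.
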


\begin{proof}
Direct computations.
\end{proof}

For any non-zero $f\in \mathcal{G}$ that is the~form \eqref{form:1} for $(a,b,c,d)\in\mathbb{C}^4$, set
$$
X_{(a:b:c:d)}=\big\{f=0\big\}\subset (\mathbb{P}^1)^4.
$$
For convenience, we set $\mathbb{P}=\mathbb{P}^3_{a,b,c,d}$ and
\begin{align*}
\mathbb{P}^{\operatorname{Sing}}&=\left\{(a,b,c,d)\in\mathbb{P}\ |\ X_{(a:b:c:d)}\text{ is singular}\right\},\\
\mathbb{P}^{\operatorname{Red}}&=\left\{(a,b,c,d)\in\mathbb{P}\ |\ X_{(a:b:c:d)}\text{ is reducible}\right\},\\
\mathbb{P}^{\operatorname{Curv}}&=\left\{(a,b,c,d)\in\mathbb{P}\ |\ \dim(\operatorname{Sing}(X_{(a:b:c:d)}))=1\right\},\\
\mathbb{P}^{6\mathbb{A}_1}&=\left\{(a,b,c,d)\in\mathbb{P}\ |\ X_{(a:b:c:d)}\text{ has exactly $6$ isolated $\mathbb{A}_1$ singularities}\right\},\\
\mathbb{P}^{4\mathbb{A}_1}&=\left\{(a,b,c,d)\in\mathbb{P}\ |\ X_{(a:b:c:d)}\text{ has exactly $4$ isolated $\mathbb{A}_1$ singularities}\right\},\\
\mathbb{P}^{2\mathbb{A}_1}&=\left\{(a,b,c,d)\in\mathbb{P}\ |\ X_{(a:b:c:d)}\text{ has exactly $2$ isolated $\mathbb{A}_1$ singularities}\right\}.
\end{align*}

\begin{theorem}
\label{theorem:sing}
The boundary of the~moduli space $\mathbb{P}^{\operatorname{Sing}}$ admits a~stratification
\begin{equation*}
     \mathbb{P}^{\operatorname{Sing}}
=\mathbb{P}^{\operatorname{Red}}
\sqcup
\mathbb{P}^{\operatorname{Curv}}
\sqcup
\mathbb{P}^{6\mathbb{A}_1}
\sqcup
\mathbb{P}^{4\mathbb{A}_1}
\sqcup
\mathbb{P}^{2\mathbb{A}_1},
\end{equation*}
and the~closure of the~strata satisfy:
$$
\overline{\mathbb{P}^{2\mathbb{A}_1}} = \mathbb{P}^{\operatorname{Sing}}, \quad \overline{\mathbb{P}^{\operatorname{Curv}}}\setminus \mathbb{P}^{\operatorname{Curv}}= \mathbb{P}^{\operatorname{Red}}, \quad \overline{\mathbb{P}^{\operatorname{4\mathbb{A}_1}}}\setminus \mathbb{P}^{\operatorname{4\mathbb{A}_1}}= \mathbb{P}^{\operatorname{Red}} \sqcup \mathbb{P}^{\operatorname{6\mathbb{A}_1}}.
$$
Moreover, in the~notation of \eqref{RST}, the~following assertions hold:
\begin{enumerate}
\item[$(\mathrm{1})$] $\phi\big(\mathbb{P}^{\operatorname{Sing}}\big)$ is the~hypersurface
$$
\{S^3-27T^2=0\} \subset \mathbb{P}(1,3,4,6),
$$
and the~divisor $X_{(a:b:c:d)}$ is singular if and only if
$$
(a^2-b^2)(a^2-c^2)(a^2-d^2)(b^2-c^2)(b^2-d^2)(c^2-d^2)=0,
$$

\item[$(\mathrm{2})$] $\phi\big(\mathbb{P}^{\operatorname{Red}}\big)=(2:2:0:0)\in\mathbb{P}(1,3,4,6)$,
and $X_{(a:b:c:d)}$ is reducible if and only if
$$
a^2=b^2=c^2=d^2 \mbox{ or } \sigma(b)=\sigma(c)=\sigma(d)=0 \mbox{ for some } \sigma \in \mathfrak{S}_4,
$$

\item[$(\mathrm{3})$] $\phi\big(\mathbb{P}^{6\mathbb{A}_1}\big)=\Big(2:0:\frac{4}{3}:\frac{8}{27}\Big)$, and $X_{(a:b:c:d)}$ has $6$ ordinary double points if and only if
$$
\sigma(a)=\sigma(b)=0 \mbox{ and } \sigma(c)^2=\sigma(d)^2 \mbox{ for some } \sigma \in \mathfrak{S}_4.
$$

\item[$(\mathrm{4})$] $\phi\big(\overline{\mathbb{P}^{\operatorname{Curv}}}\big)=\{S=T=0\}$,
and $X_{(a:b:c:d)}$ is singular along a~curve if and only~if
$$
\sigma(b)^2=\sigma(c)^2=\sigma(d)^2\ne 0  \mbox{ for some } \sigma \in \mathfrak{S}_4,
$$

\item[$(\mathrm{5})$] $\phi\big(\mathbb{P}^{4\mathbb{A}_1}\big)$ is the~set of points
$$
\left\{\left(\frac{1}{2}(t+1):\frac{1}{32}(t+1)(t-1)^2:\frac{1}{12}t^2:\frac{1}{216}t^3\right)\ \big|\ t\in\mathbb{C}\setminus\{0,1\}\right\}\subset \mathbb{P}(1,3,4,6),
$$
and  $(a:b:c:d)\in\overline{\mathbb{P}^{4\mathbb{A}_1}}$ if and only if there is $\sigma\in\mathfrak{S}_4$ such that
either $\sigma(a)^2=\sigma(b)^2$ and $\sigma(c)^2=\sigma(d)^2$, or $\sigma(a)=\sigma(b)=0$.
\end{enumerate}
\end{theorem}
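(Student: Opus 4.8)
The plan is to first linearize the geometry so that the singularity analysis becomes an eigenvalue problem, and then to read off the images in $\mathbb{P}(1,3,4,6)$ from the classical invariant theory of the binary quartic. The starting point is the factorization already visible in the $u_i$: setting $p=x_1x_2+y_1y_2$, $q=x_1x_2-y_1y_2$, $r=x_1y_2+y_1x_2$, $s=x_1y_2-y_1x_2$ and letting $P,Q,R,S$ be the analogous expressions in $(x_3,y_3,x_4,y_4)$, one has $u_1=pP$, $u_2=qQ$, $u_3=rR$, $u_4=sS$. The two Segre embeddings then identify $(\mathbb{P}^1)^4$ with $Q_1\times Q_2$, where $Q_1=\{p^2-q^2-r^2+s^2=0\}$ and $Q_2=\{P^2-Q^2-R^2+S^2=0\}$ are smooth quadric surfaces with common Gram matrix $J=\operatorname{diag}(1,-1,-1,1)$, and under this identification $X_{(a:b:c:d)}$ becomes the $(1,1)$-divisor $\{\mathbf{v}^{\mathsf{T}}M\mathbf{w}=0\}$ cut out on $Q_1\times Q_2$ by the diagonal matrix $M=\operatorname{diag}(a,d,b,c)$, with $\mathbf{v}=(p,q,r,s)$ and $\mathbf{w}=(P,Q,R,S)$.

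Next I would compute the singular locus on the product of quadrics. Writing $T_{\mathbf{v}}Q_1=(J\mathbf{v})^{\perp}$ and $T_{\mathbf{w}}Q_2=(J\mathbf{w})^{\perp}$, a point $(\mathbf{v},\mathbf{w})$ of the divisor is singular exactly when $M\mathbf{w}=\lambda J\mathbf{v}$ and $M\mathbf{v}=\mu J\mathbf{w}$ for some scalars $\lambda,\mu$. Since $J$ and $M$ are diagonal, applying $M$ to the second relation and using $MJ=JM$ and $J^2=\operatorname{id}$ gives $M^2\mathbf{v}=\lambda\mu\,\mathbf{v}$, so that $\mathbf{v}$ is an eigenvector of $M^2=\operatorname{diag}(a^2,d^2,b^2,c^2)$ lying on $Q_1$ (and dually $\mathbf{w}$ on $Q_2$). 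This is the crux: a singular point can occur only on an eigenspace of $M^2$, hence only when two of $a^2,b^2,c^2,d^2$ coincide, because no standard basis vector lies on $Q_1$. Using the $\mathfrak{S}_4$-action permuting the $u_i$ and the sign changes inside $\Gamma^{\mathcal G}=W(\mathrm{F}_4)$, I may normalize which squares coincide and which parameters vanish, so that the whole analysis depends only on the multiset $\{a^2,b^2,c^2,d^2\}$.

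Then I would run the case analysis. When the four squares are distinct the divisor is smooth; a single coincidence produces an eigenspace that is a line in $\mathbb{P}^3$ meeting $Q_1$ in two points, with the partner $\mathbf{w}$ determined, giving $\mathbb{P}^{2\mathbb{A}_1}$; two coincidences with both pairs nonzero give four nodes, while if one coincident pair consists of vanishing parameters the eigenvalue-zero branch $\lambda=\mu=0$ decouples $\mathbf{v}$ and $\mathbf{w}$ and contributes $\ker M\cap Q_1$ paired independently with $\ker M\cap Q_2$, raising the count to six and yielding $\mathbb{P}^{6\mathbb{A}_1}$; three equal nonzero squares make the eigenspace a plane meeting $Q_1$ in a conic, giving a one-dimensional singular locus $\mathbb{P}^{\operatorname{Curv}}$; and the remaining collapses (all four squares equal, or three parameters zero) are exactly the reducible locus of Lemma~\ref{lemma:reducible}. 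For each isolated singularity I would expand $f$ to second order and check that the quadratic term has rank four, confirming the ordinary double point ($\mathbb{A}_1$) type, and the closure relations would follow by specializing the explicit parameter conditions. Keeping the node counts correct across the degenerate branches—in particular separating $4\mathbb{A}_1$ from $6\mathbb{A}_1$ via the kernel of $M$—and verifying that every point is a genuine ordinary double point is the main obstacle.

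Finally I would compute the images under $\phi=\phi_3\circ\phi_2\circ\phi_1$ of Proposition~\ref{proposition:explicit}. Since $\phi_1$ squares the coordinates, $\phi_2$ forms the elementary symmetric functions of $p_i=(\text{parameter})^2$, and $\phi_3$ sends these to the coordinates of \eqref{RST}, I would identify $S$ and $T$ with the classical degree-two and degree-three invariants $I,J$ of the binary quartic $\prod_i(x-p_i)$. Then $\{S^3-27T^2=0\}$ is its discriminant, so it is precisely $\phi(\mathbb{P}^{\operatorname{Sing}})$, the locus where two of the $p_i$ coincide; the vanishing $S=T=0$ is the classical condition for a root of multiplicity at least three, giving $\phi(\overline{\mathbb{P}^{\operatorname{Curv}}})$; and the single points $\big(2:0:\tfrac43:\tfrac{8}{27}\big)$ and $(2:2:0:0)$, together with the rational curve of assertion~(5), drop out by substituting the root configurations $\{0,0,\nu,\nu\}$, $\{\nu,\nu,\nu,\nu\}$ (or $\{0,0,0,\nu\}$), and $\{x,x,y,y\}$ into these invariants.
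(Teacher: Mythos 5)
Your proposal takes a genuinely different route from the paper, which gives no self-contained argument at all: the paper's proof of Theorem~\ref{theorem:sing} is the single line ``follows from direct computation, see \cite{HolweckLuquePlanat}''. Your strategy replaces that citation with an actual mechanism, and the three pillars are sound. The factorizations $u_1=pP$, $u_2=qQ$, $u_3=rR$, $u_4=sS$ are correct, so $X_{(a:b:c:d)}$ really is the $(1,1)$-divisor cut out on $Q_1\times Q_2$ by $M=\operatorname{diag}(a,d,b,c)$ (this is compatible with the paper's own embedding into $\mathbb{P}^3\times\mathbb{P}^3$ in Section~\ref{section:2-2-divisors}, which the paper does not exploit for this theorem). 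The Lagrange-multiplier conditions $M\mathbf{w}=\lambda J\mathbf{v}$, $M\mathbf{v}=\mu J\mathbf{w}$ do imply $M^2\mathbf{v}=\lambda\mu\,\mathbf{v}$, and no coordinate point or coordinate line lies on $Q_1$, so the eigenvalue dichotomy is exactly right. The invariant-theoretic step also checks out quantitatively: with the paper's $\phi_3$ one gets $S=I/12$ and $T=-J/432$ for the binary quartic with roots $a^2,b^2,c^2,d^2$, whence $S^3-27T^2=(4I^3-J^2)/6912=\Delta/256$, so $\{S^3-27T^2=0\}$ is the image of the discriminant locus, $\{S=T=0\}$ is the triple-root locus, and the special points and the rational curve in assertions $(2)$, $(3)$, $(5)$ follow by substituting the root configurations you list.

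There is, however, one concrete error in your case bookkeeping, and it matters for assertions $(3)$ and $(5)$. Your claim ``a single coincidence produces \ldots the partner $\mathbf{w}$ determined, giving $\mathbb{P}^{2\mathbb{A}_1}$'' is false when the unique coincident pair is the vanishing one. If $a=b=0$ while $c^2\neq d^2$ are nonzero and distinct, then any singular point has $\lambda\mu=0$, which forces \emph{both} $\mathbf{v}$ and $\mathbf{w}$ into $\ker M$; the branch decouples and yields $(\ker M\cap Q_1)\times(\ker M\cap Q_2)$, i.e.\ $2\times 2=4$ nodes, not $2$. This locus is precisely the second alternative $\sigma(a)=\sigma(b)=0$ in assertion $(5)$ --- compare the paper's explicit example $X_{(0:0:1:d)}$ with $d\neq 0,\pm1$, which has $4$ nodes --- so your enumeration as literally stated would place part of $\mathbb{P}^{4\mathbb{A}_1}$ inside $\mathbb{P}^{2\mathbb{A}_1}$ and would break the closure relation $\overline{\mathbb{P}^{4\mathbb{A}_1}}\setminus\mathbb{P}^{4\mathbb{A}_1}=\mathbb{P}^{\operatorname{Red}}\sqcup\mathbb{P}^{6\mathbb{A}_1}$. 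The repair is already contained in your own text: the decoupled zero branch contributes $4$ points whenever $\dim\ker M=2$, \emph{independently} of whether the remaining two squares coincide, while each nonzero repeated eigenvalue contributes $2$ points with determined partner. Organizing the case analysis by (number of nonzero repeated eigenvalues, dimension of $\ker M$) rather than by ``number of coincidences'' gives the counts $2$, $4$, $4$, $6$, the curve stratum, and the reducible collapses of Lemma~\ref{lemma:reducible} exactly as the theorem asserts; with that correction, and the deferred rank-four Hessian check at each isolated singular point, your proof goes through.
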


\begin{proof}
Follows from direct computation, see \cite{HolweckLuquePlanat}.
\end{proof}

Let us translate Theorem~\ref{theorem:sing} into a~simpler form. To do this, consider the~plane
$$
\{a+b=0\}\subset\mathbb{P}(\mathcal{G})=\mathbb{P}^3_{a,b,c,d}.
$$
If $(a:b:c:d)\in\{a+b=0\}$, then $X_{(a:b:c:d)}$ is singular.
Vise versa, if $X_{(a:b:c:d})$ is singular, then the~plane $\{a+b=0\}$ contains a~point in the~$W(\mathrm{F}_4)/\langle -1\rangle$-orbit of $(a:b:c:d)$.
Therefore, in Theorem~\ref{theorem:sing}, we can assume that $a+b=0$ up to the~action of $W(\mathrm{F}_4)/\langle -1\rangle$,
and we can restate the~result in terms of points in the~plane $\{a+b=0\}$. Namely, set
\begin{align*}
\mathbb{P}^{\operatorname{Red}}_{a+b=0}=\mathbb{P}^{\operatorname{Red}}\cap\{a+b=0\},\\
\mathbb{P}^{\operatorname{Curv}}_{a+b=0}=\mathbb{P}^{\operatorname{Curv}}\cap\{a+b=0\},\\
\mathbb{P}^{6\mathbb{A}_1}_{a+b=0}=\mathbb{P}^{6\mathbb{A}_1}\cap\{a+b=0\},\\
\mathbb{P}^{4\mathbb{A}_1}_{a+b=0}=\mathbb{P}^{4\mathbb{A}_1}\cap\{a+b=0\},\\
\mathbb{P}^{2\mathbb{A}_1}_{a+b=0}=\mathbb{P}^{4\mathbb{A}_1}\cap\{a+b=0\}.
\end{align*}

\begin{corollary}
\label{corollary:singular}
The plane $\{a+b=0\}\subset \mathbb{P}(\mathcal{G})=\mathbb{P}^3_{a,b,c,d}$ admits the~stratification
$$
\{a+b=0\}=\mathbb{P}^{\operatorname{Red}}_{a+b=0}\sqcup\mathbb{P}^{\operatorname{Curv}}_{a+b=0}\sqcup\mathbb{P}^{6\mathbb{A}_1}_{a+b=0}\sqcup\mathbb{P}^{4\mathbb{A}_1}_{a+b=0}\sqcup\mathbb{P}^{2\mathbb{A}_1}_{a+b=0},
$$
which satisfies
$$
\overline{\mathbb{P}^{2\mathbb{A}_1}_{a+b=0}}=\{a+b=0\}, \quad
\overline{\mathbb{P}^{\operatorname{Curv}}_{a+b=0}}\setminus
\mathbb{P}^{\operatorname{Curv}}_{a+b=0}=\mathbb{P}^{\operatorname{Red}}_{a+b=0}, \quad
\overline{\mathbb{P}^{4\mathbb{A}_1}_{a+b=0}}\setminus
\mathbb{P}^{4\mathbb{A}_1}_{a+b=0}=\mathbb{P}^{\operatorname{Red}}_{a+b=0}\sqcup\mathbb{P}^{6\mathbb{A}_1}_{a+b=0}.
$$
Moreover, under the~canonical isomorphism $\{a+b=0\}\simeq\mathbb{P}^2_{a,c,d}$,
the following holds:
\begin{align*}
\overline{\mathbb{P}^{\operatorname{Curv}}_{a+b=0}}&=\big\{(a+c)(a-c)(a+d)(a-d)=0\big\}, \\
\overline{\mathbb{P}^{4\mathbb{A}_1}_{a+b=0}}&=\big\{a(c+d)(c-d)=0\big\}, \\
\mathbb{P}^{\operatorname{Red}}_{a+b=0}&=\big\{ (1:1:1), (-1:1:1), (1:-1:1), (1:1:-1), (0:1:0), (0:0:1) \big\}, \\
\mathbb{P}^{6\mathbb{A}_1}_{a+b=0}&=\big\{ (1:0:0), (0:1:1), (0:1:-1)\big\}.
\end{align*}
\end{corollary}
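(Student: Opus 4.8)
The plan is to first establish the reduction to the plane $\{a+b=0\}$ asserted in the paragraph preceding the statement, and then to intersect each of the five strata of Theorem~\ref{theorem:sing} with this plane.

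For the reduction, recall from Theorem~\ref{theorem:sing}(1) that $X_{(a:b:c:d)}$ is singular if and only if two of the four numbers $a^2,b^2,c^2,d^2$ coincide. By Proposition~\ref{proposition:explicit}, the group $W(\mathrm{F}_4)/\langle-1\rangle$ contains both the sign changes $(\mathbb{Z}/2\mathbb{Z})^4/\langle-1\rangle$ realized by $\phi_1$ and the symmetric group $\mathfrak{S}_4$ permuting $a,b,c,d$ realized by $\phi_2$; together these generate a subgroup of order $8\cdot 24=192$, the index-$3$ subgroup of $W(\mathrm{F}_4)/\langle-1\rangle$. Thus, if $X_{(a:b:c:d)}$ is singular, a permutation moves the coinciding pair into the slots $a,b$, so that $a^2=b^2$, and a single sign change on $b$ then yields $a=-b$; this places a representative of the orbit in $\{a+b=0\}$. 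Conversely, $a+b=0$ forces $a^2=b^2$, hence singularity. This is exactly the asserted dichotomy, and it justifies studying the strata inside $\{a+b=0\}\simeq\mathbb{P}^2_{a,c,d}$.

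It then remains to substitute $b=-a$ into the symmetric conditions of parts (2)--(5) of Theorem~\ref{theorem:sing} and, for each, to list the permutations $\sigma\in\mathfrak{S}_4$ compatible with the constraint $a^2=b^2$. For the curve locus one needs three of $a^2,b^2,c^2,d^2$ equal and nonzero; since $a^2=b^2$ already holds, the remaining cases are $c^2=a^2$ or $d^2=a^2$, giving $\overline{\mathbb{P}^{\operatorname{Curv}}_{a+b=0}}=\{(a+c)(a-c)(a+d)(a-d)=0\}$. For the $4\mathbb{A}_1$ closure one needs either a second pair of equal squares, which forces $c^2=d^2$, or two vanishing coordinates, which under $b=-a$ forces $a=0$; together these give $\{a(c+d)(c-d)=0\}$. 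The two finite strata are handled the same way: the condition $a^2=b^2=c^2=d^2$ together with the three-zero condition yields the six points listed for $\mathbb{P}^{\operatorname{Red}}_{a+b=0}$ (using the identification $(1:-1:-1)=(-1:1:1)$ in $\mathbb{P}^2$), while the two-zeros-with-equal-square condition yields the three points of $\mathbb{P}^{6\mathbb{A}_1}_{a+b=0}$. The disjointness of the five pieces and the three closure relations are then inherited by intersecting the global stratification of Theorem~\ref{theorem:sing} with the plane.

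The main obstacle I expect is the combinatorial bookkeeping forced by the fact that $\{a+b=0\}$ is not invariant under $W(\mathrm{F}_4)/\langle-1\rangle$: a single point of $\mathbb{P}^2_{a,c,d}$ may satisfy a given stratum's defining condition through several distinct permutations $\sigma$, and one must collect all of them while verifying that the point is not silently absorbed into a deeper stratum. The delicate checks all occur along the mutual boundaries of the node strata --- for instance confirming that $(1:0:0)$ lies in $\mathbb{P}^{6\mathbb{A}_1}_{a+b=0}$ rather than in the open $4\mathbb{A}_1$ locus (it is the common intersection of the lines $c=d$ and $c=-d$), and that the four reducible points with $a^2=c^2=d^2$ coincide precisely with $(1:\pm1:\pm1)$ after the projective identification. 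These boundary incidences are exactly the content of the three stated closure relations.
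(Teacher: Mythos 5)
Your proposal is correct and follows essentially the same route as the paper: the paper offers no separate proof of this corollary, presenting it exactly as you do --- first the observation that every singular $X_{(a:b:c:d)}$ has a $W(\mathrm{F}_4)/\langle -1\rangle$-orbit representative on $\{a+b=0\}$ (via permutations and sign changes), then the substitution $b=-a$ into the conditions of Theorem~\ref{theorem:sing}(2)--(5). One harmless slip: in the $4\mathbb{A}_1$ case, ``two vanishing coordinates'' does not force $a=0$ (the case $c=d=0$ gives $(1:0:0)$), but since $\{c=d=0\}\subset\{(c+d)(c-d)=0\}$ the stated locus $\{a(c+d)(c-d)=0\}$ is unaffected, and you correctly place $(1:0:0)$ in $\mathbb{P}^{6\mathbb{A}_1}_{a+b=0}$ later.
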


Now, up to the~action of the~group $W(\mathrm{F}_4)/\langle -1\rangle$ on $\mathbb{P}(\mathcal{G})=\mathbb{P}^3_{a,b,c,d}$,
we can explicitly describe all singular divisors $X_{(a:b:c:d)}\subset(\mathbb{P}^1)^4$ as follows:
\begin{itemize}
\item the~divisor $X_{(1:-1:1:1)}$ is reducible and is given by
$$
(x_1x_4-y_1y_4)(x_2x_3-y_2y_3)=0,
$$
so $X_{(1:-1:1:1)}$ is a~union of two divisors of degree $(1,0,0,1)$ and $(0,1,1,0)$;

\item the~divisor $X_{(0:0:1:1)}$ has $6$ isolated $\mathbb{A}_1$ singularities at
\begin{align*}
\big((1:-1),(1:-1),(1:1),(1:1)\big), &\quad \big((1:1),(1:1),(1:1),(1:1)\big), \\
\big((1:1),(1:-1),(1:-1),(1:1)\big), &\quad \big((1:-1),(1:-1),(1:1),(1:-1)\big), \\
\big((1:-1),(1:-1),(1:-1),(1:-1)\big), &\quad \big((1:1),(1:1),(1:-1),(1:-1)\big),
\end{align*}
and $X_{(0:0:1:1)}$ is toric --- the~toric K-polystable Fano 3-fold with reflexible ID 1530
in the~Graded Ring Data Base of toric canonical Fano 3-folds \cite{Kasprzyk};

\item the~divisor $X_{(1:-1:1:d)}$ with $d\neq \pm 1$ is singular along the~degree $(1,1,1,1)$ curve
$$
\left\{\big((u:v),(v:u),(u:v),(v:u)\ \big|\ (u:v)\in\mathbb{P}^1\right\}\subset\big(\mathbb{P}^1\big)^4,
$$
and $X_{(1:-1:1:d)}$ has $\mathbb{A}_1$ singularity along this curve;

\item  the~divisor $X_{(0:0:1:d)}$ with $d\neq \pm 1$ and $d\neq 0$ has $4$ isolated $\mathbb{A}_1$ singularities
\begin{align*}
\big((1:-1),(1:-1),(1:1),(1:1)\big), &\quad \big((1:1),(1:1),(1:1),(1:1)\big), \\
\big((1:-1),(1:-1),(1:-1),(1:-1)\big), &\quad \big((1:1),(1:1),(1:-1),(1:-1)\big),
\end{align*}
\item general divisor $X_{(a:-a:c:d)}$ has $2$ isolated $\mathbb{A}_1$ singularities at
\begin{align*}
\big((1:-1),(1:-1),(1:1),(1:1)\big), &\quad \big((1:1),(1:1),(1:1),(1:1)\big).
\end{align*}
\end{itemize}

\section{Divisors of degree $(2,2)$ in $\mathbb{P}(1,1,2)\times\mathbb{P}(1,1,2)$}
\label{section:2-2-divisors}

In Section~\ref{section:geometry}, we described all GIT polystable degenerations of smooth Fano~3-folds in the~deformation family \textnumero 4.1.
We will prove later in Section~\ref{section:K-stability}, that all these degenerations are K-polystable provided that they are irreducible.
However, one point of the~GIT moduli space corresponds to the~orbit of the~divisor
$$
\big\{(x_1x_2-y_1y_2)(x_3x_4-y_3y_4)=0\big\}\subset\mathbb{P}^1_{x_1,y_1}\times\mathbb{P}^1_{x_2,y_2}\times\mathbb{P}^1_{x_3,y_3}\times\mathbb{P}^1_{x_4,y_4},
$$
which is a~union of divisors of degree $(1,1,0,0)$ and $(0,0,1,1)$.
It is not K-polystable, and we have to resolve this issue. This can be done as follows.

In the~notations of Section~\ref{section:geometry}, the~GIT moduli space is the~global quotient
$$
\mathbb{P}^3_{a,b,c,d}\slash \big(W(\mathrm{F}_4)\big/\langle -1\rangle\big),
$$
and the~reducible divisor corresponds to the~orbit of the~point $(0:0:0:1)\in\mathbb{P}^3_{a,b,c,d}$.
When we approach the~point $(0:0:0:1)$ along a~line in $\mathbb{P}^3_{a,b,c,d}$, we can reparametrise the~corresponding one-dimensional
subfamily of divisors $X_{(a:b:c:d)}\subset(\mathbb{P}^1)^4$ and find their unique K-polystable limit that replaces the~reducible one.
To find such reparametrisations, we consider the~embedding $X_{(a:b:c:d)}\hookrightarrow\mathbb{P}^3\times\mathbb{P}^3$ via the~map
\begin{multline*}
\quad\quad\quad\quad\big((x_1:y_1),(x_2:y_2),(x_3:y_3),(x_4:y_4)\big)\mapsto\\
\mapsto\big((x_1x_2-y_1y_2:2x_1y_2:2x_2y_1:x_1x_2+y_1y_2),(x_3x_4-y_3y_4:2x_3y_4:2x_4y_3:x_3x_4+y_3y_4)\big).
\end{multline*}
Then the~image of $X_{(a:b:c:d)}\subset\mathbb{P}^3_{u_1,u_2,u_3,u_4}\times\mathbb{P}^3_{v_1,v_2,v_3,v_4}$ is given by
$$
\left\{\aligned
&u_1^2+u_2u_3-u_4^2=0, \\
&v_1^2+v_2v_3-v_4^2=0, \\
&du_1v_1-\frac{b}{4}(u_2+u_3)(v_2+v_3)-\frac{c}{4}(u_2-u_3)(v_2-v_3)-au_4v_4=0.
\endaligned
\right.
$$
Now, if we approach the~point $(0:0:0:1)\in\mathbb{P}^3_{a,b,c,d}$ along the~line
$$
L_{(a:b:c)}=\big\{(sa:sb:sc:t)\ |\ (s:t)\in\mathbb{P}^1\big\}\subset\mathbb{P}^3_{a,b,c,d},
$$
we obtain the~one parameter family
$$
\left\{\aligned
&u_1^2+u_2u_3-u_4^2=0, \\
&v_1^2+v_2v_3-v_4^2=0, \\
&u_1v_1-s\Bigg(\frac{b}{4}(u_2+u_3)(v_2+v_3)-\frac{c}{4}(u_2-u_3)(v_2-v_3)-au_4v_4\Bigg)=0,
\endaligned
\right.
$$
which degenerates to a~reducible 3-fold when $s$ approaches $0$.
We can reparametrise it by changing the~parameter $s\mapsto s^2$,
and then changing coordinates as $u_1\mapsto su_1$, $v_1\mapsto sv_1$,
which gives the~one-parameter family
$$
\left\{\aligned
&s^2u_1^2+u_2u_3-u_4^2=0, \\
&s^2v_1^2+v_2v_3-v_4^2=0, \\
&u_1v_1-\frac{b}{4}(u_2+u_3)(v_2+v_3)-\frac{c}{4}(u_2-u_3)(v_2-v_3)-au_4v_4=0.
\endaligned
\right.
$$
When $s$ approaches $0$, the~3-folds in this family degenerate to the~singular 3-fold
\begin{equation}
\label{equation:limit}
\left\{\aligned
&u_2u_3-u_4^2=0, \\
&v_2v_3-v_4^2=0, \\
&u_1v_1-\frac{b}{4}(u_2+u_3)(v_2+v_3)-\frac{c}{4}(u_2-u_3)(v_2-v_3)-au_4v_4=0.
\endaligned
\right.
\end{equation}
Note that $\mathbb{P}(1_{s_1},1_{t_1},2_{w_1})\times\mathbb{P}(1_{s_2},1_{t_2},2_{w_2})\simeq\{u_2u_3-u_4^2=0,v_2v_3-v_4^2=0\}$ via the~map
$$
\big((s_1,t_1,w_1),(s_2,t_2,w_1)\big)\mapsto\big((w_1,s_1^2,t_1^2,s_1t_1),(w_2,s_2^2,t_2^2,s_2t_2)\big),
$$
so \eqref{equation:limit} is isomorphic to the~3-fold in $\mathbb{P}(1_{s_1},1_{t_1},2_{w_1})\times\mathbb{P}(1_{s_2},1_{t_2},2_{w_2})$ given by~\eqref{equation:form-2}.

For every $(a:b:c)\in\mathbb{P}^2$, the~Fano 3-fold \eqref{equation:form-2} has canonical Gorenstein singularities.
Moreover, we will see in Section~\ref{section:K-moduli} that this 3-fold is K-polystable for every $(a:b:c)\in\mathbb{P}^2$.

\begin{remark}
A more conceptual and fancy explanation of what we just did if the~following. Consider the~family of complete intersections in $\mathbb{P}^3_{u_1,u_2,u_3,u_4}\times\mathbb{P}^3_{v_1,v_2,v_3,v_4}$ given by
$$
\left\{\aligned
&d^2u_1^2+u_2u_3-u_4^2=0, \\
&d^2v_1^2+v_2v_3-v_4^2=0, \\
&d^2u_1v_1-\frac{b}{4}(u_2+u_3)(v_2+v_3)-\frac{c}{4}(u_2-u_3)(v_2-v_3)-au_4v_4=0
\endaligned
\right.
$$
over the~base $T=\mathrm{Spec}(\mathbb{C}[a,b,c,d])$.
This family is invariant under the~$\mathbb{C}^*$-action with weights $(-1, 0,0,0)$ on $(u_1,u_2,u_3,u_4)$,
with weights $(-1, 0,0,0)$ on $(v_1,v_2,v_3,v_4)$, and with weights $(0,0,0,1)$ on the~coordinates of the~base $(a,b,c,d)$.
The invariant base of the~family under this $\mathbb{C}^*$-action is $\mathrm{Spec}(T^{\mathbb{C}^*})\simeq\mathbb{C}^3$.
Let
\begin{align*}
Z^{+}&=\mathbb{C}^{4}_{a,b,c,d}\setminus\{d=0\},\\
Z^{-}&=\mathbb{C}^{4}_{a,b,c,d}\setminus\{a=b=c=0\}.
\end{align*}
Then there are morphisms $\alpha\colon Z^{+}/\mathbb{C}^*\to\mathrm{Spec}(T^{\mathbb{C}^*})$ and $\beta\colon Z^{-}/\mathbb C^*\to\mathrm{Spec}(T^{\mathbb{C}^*})$ such~that
\begin{itemize}
\item $\alpha$ is an~isomorphism and gives an~affine neighborhood of $(0:0:0:1)\in\mathbb{P}^3_{a,b,c,d}$,
\item $\beta$ is a~blow-up of the~point $(0:0:0:1)$.
\end{itemize}
\end{remark}

Therefore, blowing up $\mathbb{B}\to\mathbb{P}^3_{a,b,c,d}$ at the~$W(\mathrm{F}_4)/\langle -1\rangle$-orbit of the~point $(0:0:0:1)$,
we obtain a~new deformation family
$$
\mathcal{X}\to \mathbb{B}
$$
that generically parametrises smooth Fano 3-folds in the~family~\textnumero 4.1.

Let $\mathbb{E}$ be the~exceptional divisor of this blow up over $(0:0:0:1)$.
Then $\mathbb{E}$ is canonically isomorphic to $\mathbb{P}^2_{a,b,c}$,
and closed points of $\mathbb{E}$ parametrise singular divisors of degree $(2,2)$ in the~product $\mathbb{P}(1_{s_1},1_{t_1},2_{w_1})\times\mathbb{P}(1_{s_2},1_{t_2},2_{w_2})$ that are given by \eqref{equation:form-2}.

\begin{remark}
\label{remark:1-2-3}
If $\Theta$ is the~stabilizer in $W(\mathrm{F}_4)/\langle -1\rangle$ of $(0:0:0:1)$,
then
$$
\Theta\simeq\mathfrak{S}_4\times(\mathbb{Z}/2\mathbb{Z})
$$
the group $\Theta$ is isomorphic to the~Weyl group $W(\mathrm{B}_3)$,
and it acts on $\mathbb{E}$ with kernel $(\mathbb{Z}/2\mathbb{Z})$,
so that its image in $\mathrm{Aut}(\mathbb{E})$ is isomorphic to $\mathfrak{S}_4$.
This gives $\mathbb{E}/\Theta\simeq\mathbb{P}(1,2,3)$.
\end{remark}

Now, let $X_{(a:b:c)}$ be the~hypersurface in $\mathbb{P}(1_{s_1},1_{t_1},2_{w_1})\times\mathbb{P}(1_{s_2},1_{t_2},2_{w_2})$ given by~\eqref{equation:form-2}~for some \mbox{$(a:b:c)\in\mathbb{P}^2$},
and let $G$ be the~subgroup of $\mathrm{Aut}(X)$ generated by the~involutions
\begin{align*}
\tau_{1}\colon\big((s_1:t_1:w_1),(s_2:t_2:w_2)\big)&\mapsto\big((s_2:t_2:w_1),(s_1:t_1:w_2)\big),\\
\tau_{2}\colon\big((s_1:t_1:w_1),(s_2:t_2:w_2)\big)&\mapsto\big((t_1:s_1:w_1),(t_2:s_2:w_2)\big),\\
\tau_{3}\colon\big((s_1:t_1:w_1),(s_2:t_2:w_2)\big)&\mapsto\big((s_1:-t_1:w_1),(s_2:-t_2:w_2)\big),
\end{align*}
and by the~automorphisms
$$
\sigma_{\lambda}:\big((s_1:t_1:w_1),(s_2:t_2:w_2)\big)\mapsto\Bigg(\Big(s_1:t_1:\lambda w_1\Big),\Big(s_2:t_2:\frac{w_2}{\lambda}\Big)\Bigg),
$$
for $\lambda\in\mathbb{C}^\ast$.
Then $G\simeq\mathbb{C}^\ast\rtimes(\mathbb{Z}/2\mathbb{Z})^3$.
Observe that $X$ is singular along the~curves
\begin{align*}
C_1&=\big\{s_1=t_1=w_2=0\big\},\\
C_2&=\big\{s_2=t_2=w_1=0\big\}.
\end{align*}
To describe all possible singularities of $X_{(a:b:c)}$, we set
\begin{align*}
\mathbb{E}^{\operatorname{Sing}+}&=\left\{(a:b:c)\in\mathbb{E}\ |\ \operatorname{Sing}\big(X_{(a:b:c)}\big)\ne C_1\cup C_2\right\},\\
\mathbb{E}^{\operatorname{Curv}+}&=\left\{(a:b:c)\in\mathbb{E}\ |\ \operatorname{Sing}\big(X_{(a:b:c)}\big)\setminus (C_1\cup C_2) \text{ is a~curve}\right\},\\
\mathbb{E}^{4\mathbb{A}_1+}&=\left\{(a:b:c)\in\mathbb{E}\ |\ X_{(a:b:c)} \text{ has $4$ isolated $\mathbb{A}_1$ singularities outside } C_1\cup C_2\right\},\\
\mathbb{E}^{2\mathbb{A}_1+}&=\left\{(a:b:c)\in\mathbb{E}\ |\ X_{(a:b:c)} \text{ has $2$ isolated $\mathbb{A}_1$ singularities outside } C_1\cup C_2\right\}.
\end{align*}
Here and in the~following, we identify $\mathbb{E}=\mathbb{P}^2_{a,b,c}$.

\begin{proposition}
\label{proposition:stratify2nd}
We have the~stratification
$$
\mathbb{E}^{\operatorname{Sing}+}=
\mathbb{E}^{\operatorname{Curv}+}\sqcup
\mathbb{E}^{4\mathbb{A}_1+}\sqcup
\mathbb{E}^{2\mathbb{A}_1+},
$$
satisfying that
$$
\overline{\mathbb{E}^{2\mathbb{A}_1+}}=\mathbb{E}^{\operatorname{Sing}+}, \quad \mathbb{E}^{\operatorname{Sing}+}\setminus\mathbb{E}^{2\mathbb{A}_1+}=\mathbb{E}^{\operatorname{Curv}+}\sqcup\mathbb{E}^{4\mathbb{A}_1+}\sqcup\mathbb{E}^{2\mathbb{A}_1+}.
$$
Moreover, one has
\begin{align*}
\mathbb{E}^{\operatorname{Sing}+}&=\big\{\left({a}^2-{b}^2\right)\left({a}^2-{c}^2\right)\left({b}^2-{c}^2\right)=0\big\},\\
\mathbb{E}^{\operatorname{Curv}+}&=\{(1:1:1),(1:1:-1),(1:-1:-1),(1:-1:1)\big\},\\
\mathbb{E}^{4\mathbb{A}_1+}&=\{(1:0:0),(0:1:0),(0:0:1)\big\}.
\end{align*}
Furthermore, the~subsets $\mathbb{E}^{\operatorname{Curv}+}$, $\mathbb{E}^{4\mathbb{A}_1+}$, $\overline{\mathbb{E}^{2\mathbb{A}_1+}}$
are cut out on the~divisor $\mathbb{E}$ by the~strict transforms of the~subsets $\mathbb{P}^{\operatorname{Curv}}$,
$\overline{\mathbb{P}^{4\mathbb{A}_1}}$, $\overline{\mathbb{P}^{2\mathbb{A}_1}}$ defined in Section~\ref{section:geometry}, respectively.
\end{proposition}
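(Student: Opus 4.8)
The plan is to compute $\operatorname{Sing}\big(X_{(a:b:c)}\big)\setminus(C_1\cup C_2)$ by hand in affine charts via the Jacobian criterion, and then to read off the stratification, the closure relations, and the comparison with Section~\ref{section:geometry} from the resulting arrangement of lines in $\mathbb{E}=\mathbb{P}^2_{a,b,c}$. First I would dispose of the singularities forced by the ambient space. The singular locus of $\mathbb{P}(1_{s_1},1_{t_1},2_{w_1})\times\mathbb{P}(1_{s_2},1_{t_2},2_{w_2})$ is $\big(\{s_1=t_1=0\}\times\mathbb{P}(1,1,2)\big)\cup\big(\mathbb{P}(1,1,2)\times\{s_2=t_2=0\}\big)$; substituting $s_1=t_1=0$ into \eqref{equation:form-2} forces $w_1w_2=0$, and since $w_1\neq 0$ at the vertex of the first factor this cuts out exactly $C_1$ (and symmetrically $C_2$). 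Hence every singular point outside $C_1\cup C_2$ lies in the smooth locus of the ambient fourfold, which is covered by the four charts $\{s_1\neq 0,s_2\neq 0\}$, $\{s_1\neq 0,t_2\neq 0\}$, $\{t_1\neq 0,s_2\neq 0\}$, $\{t_1\neq 0,t_2\neq 0\}$. Using the involutions $\tau_1$ and $\tau_2$ of $G$ it suffices to treat two of them, and since the ``mixed'' chart is obtained from the ``matching'' chart $\{s_1=s_2=1\}$ by the substitution $c\mapsto -c$, a single computation will handle both.

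In the chart $\{s_1=s_2=1\}$, with affine coordinates $(t_1,w_1,t_2,w_2)$, the equation \eqref{equation:form-2} reads $F=w_1w_2-at_1t_2-\tfrac{b+c}{4}(1+t_1^2t_2^2)-\tfrac{b-c}{4}(t_1^2+t_2^2)$. The conditions $F_{w_1}=F_{w_2}=0$ give $w_1=w_2=0$; forming $t_1F_{t_1}-t_2F_{t_2}=0$ I obtain $\tfrac{b-c}{2}(t_1^2-t_2^2)=0$, so either $b=c$ or $t_1=\pm t_2$. Substituting each alternative back into $F_{t_1}=0$ and $F=0$ and eliminating $t_1,t_2$, I expect to find that a solution exists exactly when $a=\pm b$, $a=\pm c$, or $b=\pm c$. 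Combining with the mixed chart this yields $\mathbb{E}^{\operatorname{Sing}+}=\{(a^2-b^2)(a^2-c^2)(b^2-c^2)=0\}$, a union of six lines.

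It then remains to stratify this arrangement. At a generic point of any of the six lines the elimination produces two solutions with $w_1=w_2=0$, and the Hessian of $F$ in $(w_1,w_2,t_1,t_2)$ is nondegenerate there (for instance on $\{a=b\}$ its determinant is $b^2-c^2$ up to sign), so these are two isolated $\mathbb{A}_1$ points; this open stratum is $\mathbb{E}^{2\mathbb{A}_1+}$, which is dense in $\mathbb{E}^{\operatorname{Sing}+}$. The remaining points are the singular points of the line arrangement. At a coordinate point the equation specialises to a product of a form in the first factor and a form in the second (e.g. $w_1w_2=s_1t_1s_2t_2$ at $(1:0:0)$, and $w_1w_2=\tfrac14(s_1^2\pm t_1^2)(s_2^2\pm t_2^2)$ at $(0:1:0)$ and $(0:0:1)$), and one checks it has four isolated $\mathbb{A}_1$ points, giving $\mathbb{E}^{4\mathbb{A}_1+}$. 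At each of the four triple points the right-hand side becomes a perfect square, e.g. $w_1w_2=\tfrac12(s_1s_2+t_1t_2)^2$ at $(1:1:1)$ and $w_1w_2=\tfrac12(s_1t_2+t_1s_2)^2$ at $(1:1:-1)$, so $X$ is singular along the whole curve $\{w_1=w_2=0\}$ cut out by the vanishing of the linear form, giving $\mathbb{E}^{\operatorname{Curv}+}$. The claimed closure relations are then immediate from the incidences of the six lines, whose only singular points are the three coordinate points and the four points $(1:\pm 1:\pm 1)$.

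For the final assertion I would work in the affine chart $\{d=1\}$ around $(0:0:0:1)$ and compute projectivised tangent cones, identifying $\mathbb{E}$ with the exceptional $\mathbb{P}^2_{a,b,c}$. By Theorem~\ref{theorem:sing}, near this point $\overline{\mathbb{P}^{2\mathbb{A}_1}}=\mathbb{P}^{\operatorname{Sing}}$ is cut out by $(a^2-b^2)(a^2-c^2)(b^2-c^2)$ (the three factors involving $d$ are units there); being already homogeneous in $a,b,c$, its strict transform meets $\mathbb{E}$ in $\overline{\mathbb{E}^{2\mathbb{A}_1+}}$. The only branch of $\overline{\mathbb{P}^{\operatorname{Curv}}}$ through $(0:0:0:1)$ is $\{a^2=b^2=c^2\}$, which meets $\mathbb{E}$ in the four points $(1:\pm 1:\pm 1)=\mathbb{E}^{\operatorname{Curv}+}$; and the branches of $\overline{\mathbb{P}^{4\mathbb{A}_1}}$ through $(0:0:0:1)$ are the three lines $\{a=b=0\}$, $\{a=c=0\}$, $\{b=c=0\}$, meeting $\mathbb{E}$ in $(0:0:1),(0:1:0),(1:0:0)=\mathbb{E}^{4\mathbb{A}_1+}$. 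I expect the main obstacle to be the cross-chart bookkeeping: a single node is typically visible in several of the four charts (and at different special points the four nodes distribute over the charts differently), so care is needed to count isolated singularities without duplication and, above all, to detect the jump from finitely many nodes to a one-dimensional singular locus at the triple points; verifying $\mathbb{A}_1$-nondegeneracy along the generic strata and pinning down the exact singularity type at the special points is the technical heart of the argument.
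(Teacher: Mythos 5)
Your proposal is correct and coincides with the paper's approach: the paper's entire proof is ``Direct computations,'' and your chart-by-chart Jacobian analysis (reduction to the matching chart via the $G$-action and $c\mapsto -c$, the elimination $t_1F_{t_1}-t_2F_{t_2}=\tfrac{c-b}{2}(t_1^2-t_2^2)$, the Hessian check on the open strata, the perfect-square degenerations at $(1:\pm1:\pm1)$, and the tangent-direction computation for the strict transforms at $(0{:}0{:}0{:}1)$) is exactly the computation being invoked, and the details you give check out against the explicit singular loci listed after the proposition.
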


\begin{proof}
Direct computations.
\end{proof}

Let us describe $\operatorname{Sing}(X)\setminus(C_1\cup C_2)$ explicitly.
If $(a:b:c)\in\mathbb{E}^{\operatorname{Curv}+}$, then
$$
\operatorname{Sing}(X)\setminus(C_1\cup C_2)=\left\{\aligned
&\{w_1=w_2=s_1s_2+t_1t_2=0\} \text{ if } (a:b:c)=(1:1:1), \\
&\{w_1=w_2=s_1t_2+t_1s_2=0\} \text{ if } (a:b:c)=(1:1:-1), \\
&\{w_1=w_2=s_1s_2-t_1t_2=0\} \text{ if } (a:b:c)=(1:-1:-1), \\
&\{w_1=w_2=s_1t_2-t_1s_2=0\} \text{ if } (a:b:c)=(1:-1:1),
\endaligned
\right.
$$
and $X_{(a:b:c)}$ has $\mathbb{A}_1$ singularity along this curve.
Moreover, if $(a:b:c)=(1:0:0)$, then
\begin{align*}
\operatorname{Sing}(X)\setminus(C_1\cup C_2)=\Big\{&\big((1:0:0),(1:0:0)\big),(1:0:0),(0:1:0)\big),\\
&\big((0:1:0),(1:0:0)\big),\big((0:1:0),(0:1:0)\big)\Big\}.
\end{align*}
If $(a\colon b\colon c)=(0:1:0)$, then
\begin{align*}
\operatorname{Sing}(X)\setminus(C_1\cup C_2)=\Big\{&\big((i:1:0),(i:1:0)\big), \big((i:1:0),(-i:1:0)\big),\\
&\big((-i:1:0),(i:1:0)\big),\big((-i:1:0),(-i:1:0)\big)\Big\}.
\end{align*}
If $(a\colon b\colon c)=(0:0:1)$, then
\begin{align*}
\operatorname{Sing}(X)\setminus(C_1\cup C_2)=\Big\{&\big((1:1:0),(1:1:0)\big),\big((1:1:0),(1:-1:0)\big),\\
&\big((1:-1:0),(1:1:0)\big), \big((1:-1:0),(1:-1:0)\big)\Big\}.
\end{align*}
Note that
$X_{(1:0:0)}$, $X_{(0:1:0)}$, $X_{(0:0:1)}$ are isomorphic to the~K-polystable toric Fano 3-fold with reflexive ID 610 in the~Graded Rings Database \cite{Kasprzyk}.
If $(a:b:c)\in\mathbb{E}^{2\mathbb{A}_1+}$, then
$$
\operatorname{Sing}(X)\setminus(C_1\cup C_2)=\left\{\aligned
&\big((1:0:0),(1:0:0)\big)\cup\big((0:1:0),(0:1:0)\big)  \text{ if } b+c=0, \\
&\big((1:0:0),(0:1:0)\big)\cup\big((0:1:0),(1:0:0)\big)  \text{ if } b-c=0, \\
&\big((1:1:0),(1:-1:0)\big)\cup\big((1:-1:0),(1:1:0)\big)  \text{ if } a-b=0, \\
&\big((1:i:0),(1:i:0)\big)\cup\big((1:-i:0),(1:-i:0)\big) \text{ if } a-c=0, \\
&\big((1:i:0),(1:-i:0)\big)\cup\big((1:-i:0),(1:i:0)\big)  \text{ if } a+c=0, \\
&\big((1:1:0),(1:1:0)\big)\cup\big((1:-1:0),(1:-1:0)\big)  \text{ if } a+b=0.
\endaligned
\right.
$$
In the~next section we will show that $X_{(a:b:c)}$ is K-polystable for every $(a:b:c)\in\mathbb{P}^2$.

\section{K-polystable degenerations}
\label{section:K-stability}

Since all smooth elements of family \textnumero 4.1 are known to be K-polystable \cite{BelousovLoginov},
we now investigate singular K-polystable limits of elements in the~family.

\subsection{Case \eqref{equation:form-1}}
\label{subsection:K-polystable-1-1-1-1}
Let $X$ be an~divisor in  $\mathbb{P}^1_{x_1,y_1}\times\mathbb{P}^1_{x_2,y_2}\times\mathbb{P}^1_{x_3,y_3}\times\mathbb{P}^1_{x_4,y_4}$
that is given by zeroes of the~form \eqref{form:1} for some $(a,b,c,d)\ne(0,0,0,0)$. Suppose that $X$ is irreducible.

\begin{theorem}
\label{theorem:main-1}
The Fano 3-fold $X$ is K-polystable.
\end{theorem}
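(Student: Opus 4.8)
The plan is to prove K-polystability through the stability threshold (the $\delta$-invariant), together with the equivariant refinement needed to pass from K-semistability to K-polystability. Since the smooth members are already known to be K-polystable (indeed K-stable) by \cite{BelousovLoginov}, I would first invoke Lemma~\ref{lemma:reducible} and Theorem~\ref{theorem:sing} to reduce to the finitely many $W(\mathrm{F}_4)/\langle -1\rangle$-orbits of irreducible \emph{singular} divisors listed at the end of Section~\ref{section:geometry}: the toric $6\mathbb{A}_1$-divisor $X_{(0:0:1:1)}$, the $4\mathbb{A}_1$-divisors $X_{(0:0:1:d)}$, the generic $2\mathbb{A}_1$-divisors $X_{(a:-a:c:d)}$, and the divisor $X_{(1:-1:1:d)}$ that is singular along a curve. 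For each representative I would fix the reductive group $\widehat{G}\subseteq\operatorname{Aut}(X)$ generated by $G\simeq(\mathbb{Z}/2\mathbb{Z})^4$ together with a maximal torus of $\operatorname{Aut}^0(X)$, and aim to establish $\delta_{\widehat G}(X)\ge 1$. By the equivariant theory of Zhuang and Golota, the bound $\delta_{\widehat G}(X)\ge 1$ already yields K-semistability, while $\delta_{\widehat G}(X)>1$ yields K-polystability outright, so the delicate work is confined to the orbits where $\delta_{\widehat G}(X)=1$.

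To bound $\delta_{\widehat G}(X)$ from below I would exploit the fibration structures of $X$. Projecting to one $\mathbb{P}^1$-factor exhibits $X$ as a family of (possibly $\mathbb{A}_1$-singular) sextic del Pezzo surfaces over $\mathbb{P}^1$, since the fibres are divisors of degree $(1,1,1)$ in $(\mathbb{P}^1)^3$; while the embedding $X\hookrightarrow\mathbb{P}^3\times\mathbb{P}^3$ of Section~\ref{section:2-2-divisors} realizes $X$ as a conic bundle over $\mathbb{P}^1\times\mathbb{P}^1$. Using the Abban--Zhuang method with the flag supplied by such a fibration, the estimate reduces to $\delta$-type computations on a general fibre and along the base, with the $\mathbb{A}_1$-points entering only through orbifold-local computations on the surface fibres (they are klt). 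The role of $\widehat G$-invariance here is essential: it cuts down the admissible flags and test valuations to a small, symmetric collection, which is what makes the inequality $\delta_{\widehat G}(X)\ge 1$ a finite check rather than a search over all valuations.

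The genuine obstacle is the polystability upgrade in the cases with positive-dimensional $\operatorname{Aut}^0(X)$, where necessarily $\delta_{\widehat G}(X)=1$. For the toric divisor $X_{(0:0:1:1)}$ this is the cleanest: K-polystability is equivalent to the barycenter of the associated moment polytope lying at the origin, which can be verified directly. For $X_{(1:-1:1:d)}$, which is singular along a curve and carries a torus, I would instead show that every $\widehat G$-equivariant special test configuration with K-semistable central fibre and vanishing Futaki invariant is a product configuration; concretely this amounts to classifying the torus-invariant divisorial valuations that compute $\delta$ and checking that none of them produces a non-product degeneration. This step---controlling the optimal degenerations rather than merely bounding $\delta$---is where I expect the analysis to be most delicate, since it must simultaneously use the explicit equations, the singularity stratification of Theorem~\ref{theorem:sing}, and the precise structure of $\operatorname{Aut}(X)$.
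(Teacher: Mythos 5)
Your high-level framework (equivariant valuative criteria plus Abban--Zhuang estimates along the sextic del Pezzo fibration) does match the paper's treatment of the isolated-singularity cases, but your plan has a genuine gap exactly where you yourself locate the difficulty: the divisors $X_{(1:-1:1:d)}$ that are singular along a curve. There you propose to take $\widehat{G}$ generated by the finite group $G\simeq(\mathbb{Z}/2\mathbb{Z})^4$ and a \emph{maximal torus} of $\operatorname{Aut}^0(X)$, and then to classify all torus-invariant valuations computing $\delta$ and show that every $\widehat{G}$-equivariant special test configuration with vanishing Futaki invariant is a product. No argument is given for this classification, and with only a one-dimensional torus in play it is a large open-ended problem: there are many $\widehat{G}$-invariant centers (invariant curves and surfaces abound), and nothing in your sketch controls the valuations supported over them. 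The paper's solution is structurally different and collapses this step to one computation: using the four-lines model of Appendix~\ref{section:4-lines}, one sees that $\operatorname{Aut}^0(X)\simeq\operatorname{PGL}_2(\mathbb{C})$ and that $\operatorname{Aut}(X)$ contains $\operatorname{PGL}_2(\mathbb{C})\times(\mathbb{Z}/2\mathbb{Z})^2$, with the finite factor permuting the surfaces $\widetilde{E}_1,\widetilde{E}_2,\widetilde{E}_3,\widetilde{E}_4$ transitively; consequently the exceptional divisor $F$ of the blow-up of $\operatorname{Sing}(X)$ is the \emph{unique} $\operatorname{Aut}(X)$-invariant prime divisor over $X$, and K-polystability follows from the single Zariski-decomposition computation $\beta(F)=1-S_X(F)=\frac{1}{6}>0$ via \cite{Zhuang}. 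Without this uniqueness statement (i.e.\ without passing from the torus to the full reductive symmetry group coming from the $\mathbb{P}^3$-model), your proof of the curve case is incomplete, and that case cannot be dismissed: it occurs for a whole pencil of parameters.

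Two further inaccuracies are worth flagging. First, your claim that $\delta_{\widehat{G}}(X)=1$ holds \emph{necessarily} whenever $\operatorname{Aut}^0(X)$ is positive-dimensional is unjustified: a $\widehat{G}$-invariant valuation with $\beta=0$ is forced only if some one-parameter subgroup of the torus is centralized by the finite part of $\widehat{G}$, and in the present examples the finite symmetries invert or permute the tori, so no such valuation need exist. Indeed, the paper's Lemma~\ref{lemma:K-stability-nodal-easy} proves K-polystability of \emph{every} member with isolated singularities --- including the toric divisor $X_{(0:0:1:1)}$ and the smooth ones --- using only the finite group $G$, so your case split by orbit, the barycenter verification, and the appeal to \cite{BelousovLoginov} are all unnecessary. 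Second, in the Abban--Zhuang step you should make explicit the two reductions that actually make the fibration method close: the center $Z$ of a putative $G$-invariant destabilizing divisor cannot be a point (since $G$ fixes no point of $X$) and cannot lie in $E_1\cup E_2\cup E_3\cup E_4$ (since $\operatorname{Sing}(X)=E_1\cap E_2\cap E_3\cap E_4$ by Lemma~\ref{lemma:singular-locus}), so a general point $P\in Z$ lies on a \emph{smooth} sextic del Pezzo fiber $S$ with $P\notin E_1$, where the refinement of the Nemuro Lemma yields $\delta_P(X)\geqslant\frac{16}{15}>1$; no orbifold-local analysis at the $\mathbb{A}_1$-points is ever needed, because the general point of $Z$ never sees a singular fiber.
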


For each $i\in\{1,2,3,4\}$, let $\operatorname{pr}^i \colon X \to (\mathbb{P}^1)^3$ be the~projection to all but the~$i^{\mathrm{th}}$ factor.
Then each $\operatorname{pr}^i$ is a~birational morphism (if $X$ is smooth, then each $\operatorname{pr}^i$ blows up a~smooth elliptic curve).
Let $E_i$ be $\operatorname{pr}^i$-exceptional surface.

\begin{lemma}
\label{lemma:singular-locus}
One has $\operatorname{Sing}(X)=E_1\cap E_2\cap E_3\cap E_4$
\end{lemma}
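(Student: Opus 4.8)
The plan is to exploit the fact that the defining form \eqref{form:1} has degree $(1,1,1,1)$, hence is linear in each pair of variables $(x_i:y_i)$, and to read off both $\operatorname{Sing}(X)$ and the loci $E_i$ from the same eight partial derivatives. For each $i$, Euler's relation in the $i$-th factor gives $f = x_i\,\partial_{x_i}f + y_i\,\partial_{y_i}f$, where $A_i := \partial_{x_i}f$ and $B_i := \partial_{y_i}f$ are forms of degree $1$ in each of the three remaining factors that do not involve $(x_i:y_i)$. Since the ambient $(\mathbb{P}^1)^4$ is smooth, a point $p\in X$ is singular precisely when the differential of the section $f$ vanishes at $p$; because $f$ is multilinear this is equivalent to the vanishing of all eight forms, i.e.\ $A_i(p)=B_i(p)=0$ for every $i\in\{1,2,3,4\}$. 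Note that the vanishing of any single pair $A_i(p)=B_i(p)=0$ already forces $f(p)=0$ by the Euler relation, so one need not impose $p\in X$ separately.

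Next I would identify each $E_i$ set-theoretically with a coefficient-vanishing locus. Fix $i=4$ for concreteness. Restricting $f$ to the fiber $\{p_1\}\times\{p_2\}\times\{p_3\}\times\mathbb{P}^1_{x_4,y_4}$ yields the section $A_4(p)\,x_4+B_4(p)\,y_4$ of $\mathcal{O}_{\mathbb{P}^1}(1)$. Over the open locus where $(A_4,B_4)\neq(0,0)$ this section has a single reduced zero, so $\operatorname{pr}^4$ has singleton fibers there; being birational and quasi-finite over the normal variety $(\mathbb{P}^1)^3$, it is an isomorphism onto its image by Zariski's main theorem, and in particular $X$ is smooth along this locus. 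Over the curve $C_4=\{A_4=B_4=0\}\subset(\mathbb{P}^1)^3$ the entire $\mathbb{P}^1$-fiber lies in $X$. Hence $E_4=(\operatorname{pr}^4)^{-1}(C_4)=\{p\in X: A_4(p)=B_4(p)=0\}$, and the analogous description holds for $E_1,E_2,E_3$.

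Combining the two steps finishes the proof: a point $p$ lies in $E_1\cap E_2\cap E_3\cap E_4$ if and only if $A_i(p)=B_i(p)=0$ for all $i$, which by the first step is exactly the condition $p\in\operatorname{Sing}(X)$. I expect the only delicate point to be the identification of the exceptional surface $E_i$ with the coefficient-vanishing locus, namely ruling out that $\operatorname{pr}^i$ fails to be a local isomorphism over points with zero-dimensional fibers; this is precisely where Zariski's main theorem (a finite birational morphism to a normal target is an isomorphism) enters, and as a byproduct it re-proves that $X$ is smooth away from $\bigcap_i E_i$. The actual extraction of $\operatorname{Sing}(X)$ from the differential is immediate once multilinearity is used, so no explicit computation involving $(a,b,c,d)$ is required.
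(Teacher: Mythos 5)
Your proof is correct and takes essentially the same route as the paper's: write $f = x_if_i + y_ig_i$ (your $A_i,B_i$ are exactly these coefficients, realized as partial derivatives via multilinearity), identify $E_i = \{f_i = g_i = 0\}$, and conclude by the Jacobian criterion. The paper's proof is a three-line version of yours; your Euler-relation argument and the Zariski's-main-theorem step identifying the exceptional surface simply make explicit what the paper leaves to the reader.
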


\begin{proof}
Write the~equation of $X$ as $x_i f_i+ y_i g_i=0$
for forms $f_i$ and $g_i$ in $H^0 ((\mathbb{P}^1)^3, \mathcal{O}(1,1,1))$.
Then $E_i=\{f_i=g_i=0\}$. Now, apply the~Jacobian criterion.
\end{proof}

Let us use notations of Section~\ref{section:geometry}.
Recall that $\mathrm{Aut}(X)$ contains the~subgroup
$$
G=\langle \tau_1,\tau_2,\tau_3,\tau_4\rangle\simeq\big(\mathbb{Z}/2\mathbb{Z}\big)^4.
$$

\begin{lemma}
\label{lemma:K-stability-nodal-easy}
Suppose that $X$ has isolated singularities. Then $X$ is K-polystable.
\end{lemma}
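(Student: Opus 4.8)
The plan is to prove K-polystability through the valuative ($\delta$-invariant) criterion combined with equivariant K-stability, exploiting the large symmetry group $G\simeq(\mathbb{Z}/2\mathbb{Z})^4\subset\operatorname{Aut}(X)$. First I would determine the connected automorphism group $\operatorname{Aut}^{\circ}(X)$: for the generic isolated-singularity members (the $2\mathbb{A}_1$ and $4\mathbb{A}_1$ strata of Theorem~\ref{theorem:sing}) I expect $\operatorname{Aut}^{\circ}(X)$ to be trivial, whereas the special member $X_{(0:0:1:1)}$ with six nodes is toric and carries a $3$-torus. Since a finite automorphism group admits no nontrivial product test configuration, in the first case it suffices to prove that $X$ is K-semistable, i.e. $\delta(X)\ge 1$, which then already yields K-polystability (and even K-stability if $\delta(X)>1$). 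The toric member is handled separately.

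For the K-semistability estimate I would pass to an equivariant $\delta$-invariant. By equivariant K-stability (Zhuang) together with the equality $\delta(X)=\delta_{\widehat G}(X)$ for a reductive $\widehat G\supseteq G$, one gets $\delta(X)\ge 1$ as soon as $\delta_{\widehat G}(X)\ge 1$; the advantage of enlarging the symmetry is that every center of a destabilizing valuation must then be $\widehat G$-invariant. I would run the Abban--Zhuang inductive method along the flag furnished by a projection $\operatorname{pr}^{i}\colon X\to(\mathbb{P}^1)^3$ (or $\operatorname{pr}_i\colon X\to\mathbb{P}^1$): restrict to a general fibre, then to a $\widehat G$-invariant curve, then to a point, bounding the local contribution at each step by a one-dimensional refinement computation. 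The description $\operatorname{Sing}(X)=E_1\cap E_2\cap E_3\cap E_4$ from Lemma~\ref{lemma:singular-locus} pins down exactly where the fibres degenerate and where the $\mathbb{A}_1$ points sit, so the only centers to examine are the finitely many $G$-orbits of nodes and the exceptional surfaces $E_i$, which symmetry collapses to a handful of model computations.

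For the toric threefold $X_{(0:0:1:1)}$ I would invoke the toric criterion for K-polystability: write down the reflexive polytope of $-K_X$ (the one recorded with GRDB reflexive ID $1530$ after Theorem~\ref{theorem:sing}) and verify that its barycenter is the origin, which is equivalent to K-polystability for a toric Fano. Equivalently, one checks that the Futaki character vanishes on $\operatorname{Lie}(T)$ and combines this with the $\delta_T\ge 1$ estimate above.

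The main obstacle I anticipate is the $\delta$-estimate near the singular points: at each $\mathbb{A}_1$ node the local geometry and the behaviour of the anticanonical system must be controlled well enough to show $\beta_X(E)\ge 0$ for every $\widehat G$-invariant divisor $E$ over $X$, and to separate the strict inequality (giving K-stability) from the boundary case $\delta(X)=1$, where one must confirm that equality is realized only through product configurations, i.e. only via the torus in the toric case. Making the inductive bound come out $\ge 1$ uniformly across the $2\mathbb{A}_1$, $4\mathbb{A}_1$ and $6\mathbb{A}_1$ strata, rather than case by case, is precisely where the symmetry reduction does the real work.
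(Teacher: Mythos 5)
Your overall machinery (Zhuang's equivariant K-stability for the finite group $G\simeq(\mathbb{Z}/2\mathbb{Z})^4$ plus an Abban--Zhuang estimate along the del Pezzo fibration) is the same as the paper's, but the logical skeleton of your reduction has a genuine gap. The claim that, because $\operatorname{Aut}(X)$ is finite, K-semistability ($\delta(X)\geqslant 1$) ``already yields K-polystability'' is false: finiteness of $\operatorname{Aut}(X)$ only rules out nontrivial \emph{product} test configurations, and a strictly K-semistable Fano with finite automorphism group can still admit a non-product special test configuration with vanishing Futaki invariant (cubic surfaces with one $\mathbb{A}_2$ singularity are standard examples); the true implication goes the other way --- K-polystable together with finite $\operatorname{Aut}$ is equivalent to K-stable. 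Likewise the equality $\delta(X)=\delta_{\widehat G}(X)$ you invoke is not a known fact; Zhuang's theorems only identify the two invariants below the threshold $1$ and identify K-polystability with $G$-equivariant K-polystability. The way to close the argument --- and what the paper does --- is to work by contradiction directly at the polystable level: if $X$ is not K-polystable, then by \cite{Fujita2019,Li,Zhuang} there is a $G$-invariant prime divisor $\mathbf{F}$ over $X$ with $\beta(\mathbf{F})\leqslant 0$, so it suffices to prove a \emph{strict} bound for every such divisor. This requires no computation of $\operatorname{Aut}^{0}(X)$ (which you only ``expect'' to be trivial), no stratification into the $2\mathbb{A}_1$, $4\mathbb{A}_1$, $6\mathbb{A}_1$ cases, and no separate toric argument: the toric member $X_{(0:0:1:1)}$ is covered by the same computation, because its toric valuations with $\beta=0$ are not $G$-invariant.

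Second, you point the computation at the wrong centers. The center $Z$ of a $G$-invariant prime divisor over $X$ is an irreducible $G$-invariant subvariety; since $G$ fixes no point of $X$, $Z$ is never a point (in particular never a node --- a $G$-orbit of nodes is not irreducible, hence is not a center), and since $G$ permutes $E_1,\dots,E_4$ transitively while $E_1\cap E_2\cap E_3\cap E_4=\operatorname{Sing}(X)$ is finite by Lemma~\ref{lemma:singular-locus}, $Z$ cannot be contained in $E_1\cup E_2\cup E_3\cup E_4$. So the nodes and the surfaces $E_i$, which you single out as ``the only centers to examine,'' are precisely the loci that the symmetry excludes. The essential case is a general point $P$ of a horizontal center $Z$, where one restricts to the smooth sextic del Pezzo fiber $S\ni P$ of $\operatorname{pr}_1$ and must obtain a bound strictly bigger than $1$: the crude Nemuro-type estimate with $\delta(S)=1$ from \cite{Book} only gives $\delta_P(X)\geqslant \frac{2}{3}$, and the paper needs the refined version of \cite[Lemma~26]{CheltsovFujitaKishimotoOkada} --- exploiting $P\notin E_1$, so that the $\operatorname{ord}_F(N(u))$ term vanishes --- to reach $\delta_P(X)\geqslant \frac{16}{15}>1$, contradicting $\beta(\mathbf{F})\leqslant 0$. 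Without such a strict, uniform bound your ``boundary case $\delta=1$'' never gets resolved; with it, the lemma follows at once for all members with isolated singularities.
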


\begin{proof}
We suppose that  $X$ is not K-polystable.
Then it follows from \cite{Fujita2019,Li,Zhuang} that there exists a~$G$-invariant prime divisor $\mathbf{F}$ over $X$ such that $\beta(\mathbf{F})\leqslant 0$.
Let $Z$ be its~center~on~$X$.
Then $Z$ is not a~point, because $G$ does not fix~points in $X$.
Thus, either $Z$ is an~irreducible curve or an~irreducible surface.
Hence, since $X$ has isolated singularities by assumption, it follows from Lemma~\ref{lemma:singular-locus}
that $Z$ is not contained in $E_1\cup E_2\cup E_3\cup E_4$.

Note also that $Z$ is mapped surjectively to $\mathbb{P}^1$ via the~projection $X\to\mathbb{P}^1$ to any factor,
because the~subgroup $\langle\tau_3,\tau_4\rangle\subset G$ does not fix points in $\mathbb{P}^1$.

Let $P$ be a~general point in $Z$, and let $S$ be the~fiber of the~projection $X\to\mathbb{P}^1$ to the~first factor of $(\mathbb{P}^1)^4$ that contains $P$.
Then $P\not\in E_1\cup E_2\cup E_3\cup E_4$,
and $S$ is a~smooth sextic del Pezzo surface.
Fix $u\in\mathbb{R}_{\geqslant 0}$. Then $-K_X-uS$ is pseudo-effective $\iff$ $u\leqslant 2$.
For~$u\in[0,2]$, let $P(u)$ be the~positive part of the~Zariski decomposition of $-K_X-uS$,
and let $N(u)$ be its negative part. Note that this decomposition exists on $X$:  we have
$$
P(u)\sim_{\mathbb{R}} \left\{\aligned
&-K_X-uS \ \text{ for } 0\leqslant u\leqslant 1, \\
&-K_X-uS-(u-1)E_1\ \text{ for } 1\leqslant u\leqslant 2,
\endaligned
\right.
$$
and
$$
N(u)= \left\{\aligned
&0\ \text{ for } 0\leqslant u\leqslant 1, \\
&(u-1)E_1\ \text{ for } 1\leqslant u\leqslant 2.
\endaligned
\right.
$$
Integrating, we get $S_X(S)=\frac{1}{24}\int\limits_{0}^{2}\big(P(u)^3\big)du=\frac{11}{16}$, so that $\beta(S)=1-S_X(S)=\frac{5}{16}>0$.
Thus, since $\delta(S)=1$ \cite{Book}, applying \cite[Lemma~26]{CheltsovFujitaKishimotoOkada}, known as the~Nemuro Lemma, we get
$$
\delta_P(X)\geqslant \frac{(-K_X)^3}{3}\frac{\delta(S)}{2(-K_S)^2}=\frac{2}{3}.
$$
But the~proof of the~Nemuro Lemma gives $\delta_P(X)\geqslant\frac{16}{15}$.
Indeed, recall from \cite{AbbanZhuang,Book} that
$$
\delta_P(X)\geqslant\min\Bigg\{\frac{1}{S_X(S)},\inf_{\substack{F/S\\P\in C_S(F)}}\frac{A_S(F)}{S\big(W^S_{\bullet,\bullet};F\big)}\Bigg\},
$$
where the~infimum is taken by all prime divisors over $S$ whose center on $S$ contains $P$,~and
$$
S\big(W^S_{\bullet,\bullet};F\big)=\frac{3}{(-K_X)^3}\int\limits_0^{2}\Big(P(u)\big\vert_{S}\Big)^2\mathrm{ord}_F(N(u))du+
\frac{3}{(-K_X)^3}\int\limits_0^2\int\limits_0^{\infty}\mathrm{vol}\big(P(u)\big\vert_{S}-vF\big)dvdu.
$$
We claim that $S(W^S_{\bullet,\bullet};F)\leqslant\frac{15}{16}A_S(F)$ for every prime divisor over the~surface $S$ whose center on $S$ contains $P$.
Indeed, since $P\not\in E_1$, we have
\begin{multline*}
\quad\quad\quad\quad\quad\quad\quad\quad S\big(W^S_{\bullet,\bullet};F\big)=\frac{3}{(-K_X)^3}\int\limits_0^2\int\limits_0^{\infty}\mathrm{vol}\big(P(u)\big\vert_{S}-vF\big)dvdu=\\
=\frac{1}{8}\int\limits_0^1\int\limits_0^{\infty}\mathrm{vol}\big(-K_S-vF\big)dvdu+
\frac{1}{8}\int\limits_1^2\int\limits_0^{\infty}\mathrm{vol}\big((2-u)(-K_S)-vF\big)dvdu=\\
=\frac{1}{8}\int\limits_0^1\int\limits_0^{\infty}\mathrm{vol}\big(-K_S-vF\big)dvdu+
\frac{1}{8}\int\limits_1^2(2-u)^3\int\limits_0^{\infty}\mathrm{vol}\big(-K_S-vF\big)dvdu.
\end{multline*}
But it follows from the~definition of $\delta(S)$ that
$$
\frac{1}{(-K_S)^2}\int\limits_0^{\infty}\mathrm{vol}\big(-K_S-vF\big)\leqslant \frac{A_S(F)}{\delta(S)}.
$$
Thus, we conclude that
\begin{multline*}
\quad\quad\quad\quad\quad S\big(W^S_{\bullet,\bullet};F\big)\leqslant \frac{1}{8}\int\limits_0^1\frac{(-K_S)^2}{\delta(S)}A_S(F)du+\frac{1}{8}\int\limits_1^2(2-u)^3\frac{(-K_S)^2}{\delta(S)}A_S(F)du=\\
=\frac{3}{4}A_S(F)+\frac{3}{4}A_S(F)\int\limits_1^2(2-u)^3du=\frac{15}{16}A_S(F).\quad\quad\quad\quad\quad
\end{multline*}
This gives $\delta_P(X)\geqslant\frac{16}{15}$ as claimed.
On the~other hand, we have $\delta_P(X)\leqslant 1$, since $\beta(\mathbf{F})\leqslant 0$.
The obtained contradiction completes the~proof of the~lemma.
\end{proof}

Hence, to complete the~proof of Theorem~\ref{theorem:main-1}, we may assume that $\mathrm{dim}(\mathrm{Sing}(X))=1$.

Recall from Section~\ref{section:geometry} that $\operatorname{Sing}(X)$ consists of a~single curve,
and $X$ has ordinary double singularity along this curve.
Let $\eta\colon\widetilde{X}\to X$ be the~blow-up of the~curve $\operatorname{Sing}(X)$,
let $F$ the~exceptional divisor of the~blow-up $\eta$,
and let $\widetilde{E}_1$, $\widetilde{E}_2$, $\widetilde{E}_3$, $\widetilde{E}_4$ be the~proper transforms on the~3-fold $\widetilde{X}$ of
the surfaces $E_1$, $E_2$, $E_3$, $E_4$, respectively.
Then it follows from Appendix~\ref{section:4-lines} that we have the~following $G$-equivariant commutative diagram:
$$
\xymatrix{
&\widetilde{X}\ar@{->}[dl]_{\pi}\ar@{->}[dr]^{\eta}\\%
\mathbb{P}^3\ar@{-->}[rr]_{\chi} && X}
$$
where $\pi$ is a~birational morphism contracting $\widetilde{E}_1, \widetilde{E}_2,\widetilde{E}_3$, $\widetilde{E}_4$
to skew lines $L_1$, $L_2$, $L_3$, $L_4$, respectively, and $\pi(F)$ is a~smooth quadric surface containing those lines.
The rational map $\chi$ is the~product of the~linear projections $\mathbb{P}^3\dasharrow\mathbb{P}^1$ from $L_1$, $L_2$, $L_3$, $L_4$.
Note that
$$
\eta^*(-K_X)\sim -K_{\widetilde{X}}\sim 2F+\widetilde{E}_1+\widetilde{E}_2+\widetilde{E}_3+\widetilde{E}_4.
$$
From this diagram, we immediately see that $\mathrm{Aut}^0(X)\simeq\mathrm{PGL}_2(\mathbb{C})$,
and $\mathrm{Aut}(X)$ contains a~subgroup isomorphic to $\mathrm{PGL}_2(\mathbb{C})\times(\mathbb{Z}/2\mathbb{Z})^2$
such that the~factor $(\mathbb{Z}/2\mathbb{Z})^2$ permutes transitively the~surfaces $\widetilde{E}_1, \widetilde{E}_2,\widetilde{E}_3$, $\widetilde{E}_4$.
Then $F$ is the~only $\mathrm{Aut}(X)$-invariant prime divisor over $X$,
so it is enough to show that $\beta(F)>0$ to prove that $X$ is K-polystable~\cite{Zhuang}.

Fix $u\in\mathbb{R}_{\geqslant 0}$, $-K_X-uF$ is pseudo-effective if and only if $u\leqslant 2$, since
$$
\eta^*(-K_X)-uF\sim_{\mathbb{R}} (2-u)F+\widetilde{E}_1+\widetilde{E}_2+\widetilde{E}_3+\widetilde{E}_4.
$$
For~$u\in[0,2]$, let $P(u)$ be the~positive part of the~Zariski decomposition of $-K_X-uF$,
and let $N(u)$ be its negative part. Note that this decomposition exists on $X$:  we have
$$
P(u)\sim_{\mathbb{R}} \left\{\aligned
&(2-u)F+\widetilde{E}_1+\widetilde{E}_2+\widetilde{E}_3+\widetilde{E}_4 \ \text{ for } 0\leqslant u\leqslant 1, \\
&(2-u)\big(F+\widetilde{E}_1+\widetilde{E}_2+\widetilde{E}_3+\widetilde{E}_4\big)\ \text{ for } 1\leqslant u\leqslant 2,
\endaligned
\right.
$$
and
$$
N(u)= \left\{\aligned
&0\ \text{ for } 0\leqslant u\leqslant 1, \\
&(u-1)\big(\widetilde{E}_1+\widetilde{E}_2+\widetilde{E}_3+\widetilde{E}_4\big)\ \text{ for } 1\leqslant u\leqslant 2.
\endaligned
\right.
$$
This gives $S_X(F)=\frac{1}{24}\int\limits_{0}^{2}P(u)^3du=\frac{1}{24}\int\limits_{0}^{1}8u^3-24u^2+24du+\frac{1}{24}\int\limits_{1}^{2}8(2-u)^3udu=\frac{5}{6}$,
which implies that $\beta(F)=1-S_X(F)=\frac{1}{6}>0$. This shows that $X$ is K-polystable.

\subsection{Case \eqref{equation:form-2}}
\label{subsection:form-2}
Let $X$ be a~divisor in $\mathbb{P}(1_{s_1},1_{t_1},2_{w_1})\times \mathbb{P}(1_{s_2},1_{t_2},2_{w_2})$ given by \eqref{equation:form-2}.

\begin{theorem}
\label{theorem:main-2}
The Fano 3-fold $X$ is K-polystable.
\end{theorem}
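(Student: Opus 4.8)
The plan is to mimic the strategy used for Theorem~\ref{theorem:main-1}, exploiting the large automorphism group of $X=X_{(a:b:c)}$ to reduce K-polystability to a finite, computable $\beta$-invariant condition. First I would identify $\mathrm{Aut}^0(X)$ and a maximal reductive subgroup of $\mathrm{Aut}(X)$. From the explicit generators $\tau_1,\tau_2,\tau_3$ and the one-parameter family $\sigma_\lambda$ given before Proposition~\ref{proposition:stratify2nd}, we have $G\simeq\mathbb{C}^\ast\rtimes(\mathbb{Z}/2\mathbb{Z})^3\subseteq\mathrm{Aut}(X)$, so $\mathrm{Aut}^0(X)\supseteq\mathbb{C}^\ast$. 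By the equivariant valuative criterion of Fujita--Li and Zhuang \cite{Fujita2019,Li,Zhuang}, to prove K-polystability it suffices to check $\beta(\mathbf{F})>0$ for every $G$-invariant (indeed $\mathrm{Aut}(X)$-invariant) dvaluation $\mathbf{F}$ over $X$, together with the vanishing of the relevant Futaki-type character on the torus $\mathbb{C}^\ast$; the latter should follow from the symmetry exchanging the two factors (the involution $\tau_1$ sends $\sigma_\lambda$ to $\sigma_{1/\lambda}$), forcing the Futaki invariant to be antisymmetric, hence zero.

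Next I would classify the $G$-invariant centers on $X$. Because the finite part $(\mathbb{Z}/2\mathbb{Z})^3$ together with $\mathbb{C}^\ast$ acts without fixed points on generic loci, the only candidate centers are the two canonical singular curves $C_1=\{s_1=t_1=w_2=0\}$ and $C_2=\{s_2=t_2=w_1=0\}$, which are swapped by $\tau_1$, together with possible invariant divisors. The key move is to pass to a resolution. As in the nodal case, I expect a birational model $\eta\colon\widetilde X\to X$ resolving $C_1\cup C_2$ (and any extra singularities coming from the stratification in Proposition~\ref{proposition:stratify2nd}), on which there is a distinguished $\mathrm{Aut}(X)$-invariant exceptional divisor $F$. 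Reducing to this single invariant divisor, I would compute $S_X(F)$ via a Zariski-decomposition integral
$$
S_X(F)=\frac{1}{(-K_X)^3}\int_0^{\tau}\mathrm{vol}\bigl(\eta^*(-K_X)-uF\bigr)\,du,
$$
determining the pseudoeffective threshold $\tau$ and the positive/negative parts $P(u),N(u)$ exactly as in the computation preceding \S\ref{subsection:form-2}, and verify $\beta(F)=1-S_X(F)>0$.

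For centers that are curves rather than divisors, I would invoke the Abban--Zhuang flag method together with the Nemuro Lemma \cite[Lemma~26]{CheltsovFujitaKishimotoOkada}, slicing by a general fiber $S$ of the natural $\mathbb{P}^1$-fibration $X\to\mathbb{P}^1$ (projection to one $\mathbb{P}(1,1,2)$ factor). Provided $S$ is a (possibly mildly singular) del Pezzo surface with $\delta(S)$ controlled from below, the estimate $\delta_P(X)\geqslant\frac{(-K_X)^3}{3}\cdot\frac{\delta(S)}{2(-K_S)^2}$ reduces matters to surface $\delta$-invariants, which are either known or computable. The main obstacle I anticipate is twofold: first, because the singularities of $X$ genuinely vary with $(a:b:c)$ according to Proposition~\ref{proposition:stratify2nd} (isolated $\mathbb{A}_1$ points on the generic stratum, worse behaviour on $\mathbb{E}^{\operatorname{Curv}+}$ and at $(1:0:0),(0:1:0),(0:0:1)$), a single Zariski-decomposition computation will not cover all cases uniformly, so the argument must either be organized around the common equivariant divisor $F$ robustly across strata, or split into the finitely many special loci; second, verifying that the general surface slice $S$ remains sufficiently healthy (Gorenstein canonical del Pezzo with good $\delta$) along every stratum, and that $P(u)|_S$ retains the nef/ample behaviour needed for the Abban--Zhuang integral, will require the careful geometric bookkeeping that makes this the technical heart of the proof.
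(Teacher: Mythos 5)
Your general framework --- Zhuang's equivariant valuative criterion plus explicit Zariski-decomposition computations of $\beta$ --- is indeed the framework of the paper, but the heart of the paper's argument is the classification of $G$-invariant centers, and this is where your proposal has a genuine gap. Neither $C_1$ nor $C_2$ can be the center of a $G$-invariant valuation: they are swapped by $\tau_1$, and the center of a $G$-invariant prime divisor over $X$ must itself be irreducible and $G$-invariant. For the same reason, the blow-up $\eta\colon\widetilde{X}\to X$ of $C_1\cup C_2$ carries \emph{two} exceptional divisors $E_1$, $E_2$ interchanged by $\tau_1$, so the ``distinguished $\mathrm{Aut}(X)$-invariant exceptional divisor $F$'' you want to extract from a resolution of $C_1\cup C_2$ does not exist in the general case. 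The actual unique $G$-invariant irreducible curve (Lemma~\ref{lemma:G-invariant-curves}) is $Z=F_1\cap F_2=X\cap\{w_1=w_2=0\}$, the curve of $\mathbb{C}^\ast$-fixed points, which for generic $(a:b:c)$ lies entirely in the \emph{smooth} locus of $X$; no resolution of singularities will ever produce the relevant divisor over it. The paper's proof runs by contradiction: $G$-invariant surfaces are excluded by Lemma~\ref{lemma:class-group} (they are $\mathbb{Q}$-proportional to $-K_X$), points are excluded since $G$ fixes none, so a destabilizing $G$-invariant divisor must have center $Z$, forcing $\widetilde{X}$ to be smooth; one then blows up the strict transform $\widetilde{Z}\subset\widetilde{X}$, observes that the resulting exceptional divisor $E$ is the \emph{only} $G$-invariant prime divisor over $X$, and computes $S_X(E)=\tfrac{5}{6}<A_X(E)=2$, so $\beta(E)>0$ --- a contradiction. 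Only in the degenerate cases $(a:b:c)=(1:\pm 1:\pm 1)$ (Lemma~\ref{lemma:K-stability-PGL2-again}) does the invariant divisor arise from a singular curve, namely the extra component $C^\prime\subset\operatorname{Sing}(X)$ beyond $C_1\cup C_2$, where $\beta(E^\prime)=\tfrac{1}{6}>0$; and the three points of $\mathbb{E}^{4\mathbb{A}_1+}$ are handled by toric K-polystability, not by any $\beta$ computation.

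Your fallback for curve centers --- Abban--Zhuang slicing via the Nemuro Lemma --- is not used anywhere in the paper's proof of Theorem~\ref{theorem:main-2} (it appears only in the proof of Theorem~\ref{theorem:main-1}), and as stated it would be difficult to carry out: the ``natural $\mathbb{P}^1$-fibration'' $X\dasharrow\mathbb{P}^1$ is only a rational map, undefined along $C_1$, and you supply no lower bound for $\delta$ of its (possibly singular) del Pezzo fibers. More importantly, it is unnecessary: once the invariant-center classification above is in place, the single explicit $\beta$ computation on the blow-up of $Z$ finishes the proof, so the missing ingredient in your proposal is precisely the identification of the $\mathbb{C}^\ast$-fixed curve $F_1\cap F_2$ as the only possible destabilizing center.
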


Let us use notations of Section~\ref{section:2-2-divisors}.
Recall that $\mathrm{Aut}(X)$ has subgroup $G\simeq\mathbb{C}^\ast\rtimes(\mathbb{Z}/2\mathbb{Z})^3$,
which is generated by the~involutions $\tau_1$, $\tau_2$, $\tau_3$, and automorphisms $\sigma_\lambda$, where $\lambda\in\mathbb{C}^\ast$.
Let $\rho\colon X\dasharrow \mathbb{P}^1\times\mathbb{P}^1$ be the~rational map
$$
\big((s_1:t_1:w_1),(s_2:t_2:w_2)\big)\mapsto\big((s_1:t_1),(s_2:t_2)\big),
$$
let $\eta\colon\widetilde{X}\to X$ be the~blow-up of the~curves $C_1$ and $C_2$,
and let $E_1$ and $E_2$ be the~exceptional surfaces of $\eta$ that are mapped $C_1$ and $C_2$, respectively.
Then $\eta$ an~$\rho$ are $G$-equivariant,
and we have the~following $G$-equivariant commutative diagram:
\begin{equation}
\label{CBdiag}
\xymatrix{
&\widetilde{X}\ar@{->}[dr]^{\pi}\ar@{->}[dl]_{\eta}\\%
X\ar@{-->}[rr]_{\rho} &&\mathbb{P}^1\times\mathbb{P}^1}
\end{equation}
where $\pi$ is a~(non-standard) conic bundle, of which $E_1$ and $E_2$ are sections.

\begin{lemma}
\label{lemma:K-stability-PGL2-again}
If $\widetilde{X}$ has non-isolated singularities, $X$ is K-polystable.
\end{lemma}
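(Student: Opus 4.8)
The plan is to mirror the curve case of Theorem~\ref{theorem:main-1}: reduce to a single representative, produce a large reductive subgroup of $\operatorname{Aut}(X)$, show that there is a unique invariant prime divisor over $X$, and finish with one Zariski-decomposition computation. First I would cut down the number of cases. By Proposition~\ref{proposition:stratify2nd}, the blow-up $\widetilde{X}$ of $C_1\cup C_2$ has non-isolated singularities exactly when $(a:b:c)\in\mathbb{E}^{\operatorname{Curv}+}=\{(1:1:1),(1:1:-1),(1:-1:-1),(1:-1:1)\}$, and these four points form a single orbit of the group $\mathfrak{S}_4$ acting on $\mathbb{E}=\mathbb{P}^2_{a,b,c}$ (Remark~\ref{remark:1-2-3}); under the Klein four-group of sign changes on $b,c$ they are interchanged. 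Hence, up to isomorphism, I may assume $(a:b:c)=(1:1:1)$, so that \eqref{equation:form-2} reads $2w_1w_2=(s_1s_2+t_1t_2)^2$, and the extra singular curve is $C_0=\{w_1=w_2=s_1s_2+t_1t_2=0\}$, a curve of transverse $\mathbb{A}_1$-singularities disjoint from $C_1$ and $C_2$.

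Next I would exhibit a large reductive subgroup $G\subseteq\operatorname{Aut}(X)$. Writing $q=s_1s_2+t_1t_2$, let $\operatorname{PGL}_2(\mathbb{C})$ act by $g$ on $(s_1,t_1)$, by $(g^{-1})^{\top}$ on $(s_2,t_2)$, and trivially on $w_1,w_2$; this preserves the pairing $q$, hence the equation $2w_1w_2=q^2$, and $-1$ acts trivially, so $\operatorname{PGL}_2(\mathbb{C})\subseteq\operatorname{Aut}^0(X)$. Together with the torus $\{\sigma_\lambda\}\simeq\mathbb{C}^{\ast}$ and the involutions $\tau_1,\tau_2,\tau_3$ of Section~\ref{section:2-2-divisors}, this yields a reductive group $G$ with $\operatorname{Aut}^0(X)\supseteq\operatorname{PGL}_2(\mathbb{C})\times\mathbb{C}^{\ast}$. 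I would then argue that $C_0$ is the only invariant center over which a $G$-invariant valuation can lie. Under the twisted diagonal action, the base $\mathbb{P}^1\times\mathbb{P}^1$ of the conic bundle in \eqref{CBdiag} is $\mathbb{P}^1\times(\mathbb{P}^1)^{\vee}$ with the standard $\operatorname{PGL}_2$-action, whose only reduced invariant proper subvariety is the incidence $(1,1)$-curve $\{q=0\}$; in particular there are no invariant points and no invariant curve besides the image of $C_0$. The curves $C_1,C_2$ are interchanged by $\tau_1$, and the surfaces $\{w_1=0\}|_X,\{w_2=0\}|_X$ (and the two components of $\{q=0\}|_X$) are likewise swapped by $\tau_1$, so no prime divisor on $X$ is $G$-invariant. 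Finally, on the exceptional divisor $\mathbf{E}_0$ of the blow-up of $C_0$ the torus $\{\sigma_\lambda\}$ fixes exactly the two sections cut out by the two branches of the node, and these are exchanged by $\tau_1$; hence no $G$-invariant prime divisor refines $\mathbf{E}_0$. Consequently $\mathbf{E}_0$ is the unique $G$-invariant prime divisor over $X$, and by \cite{Zhuang} it is enough to prove $\beta(\mathbf{E}_0)>0$ to conclude that $X$ is K-polystable.

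It then remains to compute $\beta(\mathbf{E}_0)=A_X(\mathbf{E}_0)-S_X(\mathbf{E}_0)$. Since $C_0$ is a curve of transverse $\mathbb{A}_1$-singularities, its blow-up $\eta_0$ has discrepancy $0$, so $A_X(\mathbf{E}_0)=1$, and I would obtain $S_X(\mathbf{E}_0)$ by integrating the volume of $\eta_0^{\ast}(-K_X)-u\mathbf{E}_0$ over $u$ between $0$ and the pseudo-effective threshold. Concretely I would determine that threshold and the Zariski decomposition $P(u),N(u)$ of $\eta_0^{\ast}(-K_X)-u\mathbf{E}_0$ from the conic-bundle structure of \eqref{CBdiag} together with the section classes $E_1,E_2$, in the same spirit as the explicit decomposition produced in the proof of Theorem~\ref{theorem:main-1}; I expect this to give $S_X(\mathbf{E}_0)<1$ and hence $\beta(\mathbf{E}_0)>0$. (Should the uniqueness bookkeeping above prove delicate, the same conclusion can be reached by a Nemuro-type estimate of $\delta_P(X)$ along a general point of $C_0$ via \cite{AbbanZhuang,Book,CheltsovFujitaKishimotoOkada}, which bounds all valuations centered on $C_0$ at once.)

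The main obstacle is this last computation. The curve $C_0$ lies over the discriminant of the conic bundle and meets the boundary of the weighted-projective ambient space, so the nef thresholds and triple intersection numbers needed for the Zariski decomposition must be computed on the resolution with the weighted and non-reduced intersection phenomena handled carefully; for instance $\{w_1=0\}|_X$ is non-reduced along $\{q=0\}$, and such scheme-theoretic subtleties feed directly into the volume integral. A secondary point to keep airtight is the enumeration of invariant divisors: one must verify rigorously that no $G$-invariant prime divisor has center a section of $\mathbf{E}_0\to C_0$, since it is precisely this that forces uniqueness and allows a single $\beta$-computation to complete the proof.
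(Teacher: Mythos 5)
Your strategy is the same as the paper's: reduce to a single point of $\mathbb{E}^{\operatorname{Curv}+}$ (you take $(1:1:1)$, the paper takes the equivalent point $(1:-1:1)$), exhibit $\operatorname{PGL}_2(\mathbb{C})\times\mathbb{C}^\ast\subseteq\operatorname{Aut}^0(X)$, argue that the exceptional divisor over the residual singular curve is the unique $\operatorname{Aut}(X)$-invariant prime divisor over $X$, and invoke \cite{Zhuang} to reduce K-polystability to a single inequality $\beta>0$. Your reduction and group-theoretic steps are correct, and your uniqueness sketch is at essentially the same level of detail as the paper's, which asserts uniqueness after identifying $\operatorname{Aut}(X)\cong\operatorname{PGL}_2(\mathbb{C})\times\big(\mathbb{C}^\ast\rtimes(\mathbb{Z}/2\mathbb{Z})\big)$.

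The genuine gap is that you never prove the inequality on which the lemma rests: you only write that you ``expect'' $S_X(\mathbf{E}_0)<1$, and you yourself flag this computation as the main obstacle. That computation is precisely the nontrivial content of the proof, and it is where the paper supplies an idea your proposal lacks: the $G$-equivariant model of Appendix~\ref{section:4-lines}. Concretely, the paper blows up $C_1\cup C_2$ first and then the strict transform of the residual curve, and identifies the resulting 3-fold $\widehat{X}$ with an explicit blow-up of $\mathbb{P}^3$ along two skew lines followed by two further curve blow-ups; this yields the relation
$$
(\eta\circ\phi)^*(-K_X)\sim-K_{\widehat{X}}\sim 2E^{\prime}+\widehat{E}_1+\widehat{E}_2+2\widehat{F}_1+2\widehat{F}_2,
$$
from which the pseudo-effective threshold $u=2$ and the full Zariski decomposition follow (for $u\in[1,2]$ the negative part is $(u-1)\big(\widehat{E}_1+\widehat{E}_2+2\widehat{F}_1+2\widehat{F}_2\big)$); a direct integration then gives $S_X(E^{\prime})=\tfrac{5}{6}$ and $\beta(E^{\prime})=\tfrac{1}{6}>0$. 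Attempting the same computation on the one-step blow-up of $C_0$ alone, as you propose, runs straight into the difficulties you list (the non-reduced divisors $\{w_i=0\}\big|_X=2F_i$ and the residual singularities along $C_1\cup C_2$); without an explicit model these intersection numbers are not bookkeeping but the actual mathematical work, so the proof is incomplete as it stands. Your fallback is also not a routine substitute: the Nemuro-type estimate in Lemma~\ref{lemma:K-stability-nodal-easy} is applied at a point lying \emph{off} the singular locus, using the fibration of a \eqref{equation:form-1}-divisor into smooth sextic del Pezzo surfaces with $\delta(S)=1$, whereas here the centre is the singular curve $C_0$ itself and a \eqref{equation:form-2}-divisor admits no such fibration (the natural pencils cut out singular quartic del Pezzo surfaces passing through $\operatorname{Sing}(X)$), so a genuinely new flag construction and new $\delta$-estimates would be required. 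In short: right strategy, matching the paper's, but the decisive computation is missing.
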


\begin{proof}
We may assume that $(a:b:c)=(1:-1:1)$. Then $\operatorname{Sing}(X)=C_1\cup C_2\cup C^\prime$,~where
$$
C^\prime=\big\{s_1t_2-s_2t_1=w_1=w_2=0\big\}.
$$
Observe that $C^\prime=F_1\cap F_2$, where
\begin{align*}
F_1&=\{ s_1t_2-s_2t_1=w_1=0\},\\
F_2&=\{s_1t_2-s_2t_1=w_2=0\}.
\end{align*}
Let $\widetilde{F}_1$ and $\widetilde{F}_2$ be the~strict transforms of $F_1$ and $F_2$ and $\widetilde{C}^\prime=\widetilde{F}_1\cap \widetilde{F}_2$ their intersection. Under $\pi$, the~image of $\widetilde{C}^\prime$ is a~smooth $(1,1)$-curve and $\pi^*(\pi (\widetilde{C}^\prime))= \widetilde F_1+ \widetilde F_2$.

Let $\phi\colon\widehat{X}\to \widetilde{X}$ be the~blow-up of the~curve $\widetilde{C}^\prime$,
let $E^\prime$ be the~$\phi$-exceptional surface, and let $\widehat{E}_1$, $\widehat{E}_2$, $\widehat{F}_1$, $\widehat{F}_2$
be the~strict transforms on $\widehat{X}$ of the~surfaces $E_1$, $E_2$, $F_1$, $F_2$, respectively.
Then it follows from Appendix~\ref{section:4-lines} that there is $G$-equivariant commutative diagram
$$
\xymatrix{
&&\widehat{X}\ar@{->}[dr]^{\phi}\ar@{->}[dl]_{\psi}&&\\%
&\overline{X}\ar@{->}[dr]^{\varpi}\ar@{->}[dl]_{\theta}&&\widetilde{X}\ar@{->}[dr]^{\eta}\ar@{->}[dl]_{\pi}&\\%
\mathbb{P}^3\ar@{-->}[rr]_{\chi} &&\mathbb{P}^1\times\mathbb{P}^1&&X\ar@{-->}[ll]^{\rho}}
$$
where: $\psi$ is a~birational morphism that contracts $\widehat{F}_1$ and $\widehat{F}_2$,
$\theta$ is a~birational morphism that contracts $\psi(\widehat{E}_1)$ and $\psi(\widehat{E}_2)$ to two skew lines,
$\chi$ is the~product of the~linear projections from $\psi(\widehat{E}_1)$ and $\psi(\widehat{E}_2)$,
and $\varpi$ is a~$\mathbb{P}^1$-bundle.
Then the~surface $\theta\circ\psi(E^\prime)$ is a~smooth quadric that contains the~lines $\psi(\widehat{E}_1)$ and $\psi(\widehat{E}_2)$.
Thus, we have
$$
(\eta\circ\phi)^*(-K_X)\sim -K_{\widehat{X}}\sim 2E^\prime+\widehat{E}_1+\widehat{E}_2+2\widehat{F}_1+2\widehat{F}_2.
$$
The map $\psi$ is the~blow-up of the~curves $\psi(E^\prime)\cap\psi(\widehat{E}_1)$ and $\psi(E^\prime)\cap\psi(\widehat{E}_2)$.
Then
$$
\mathrm{Aut}(X)\cong\mathrm{PGL}_2\big(\mathbb{C}\big)\times\big(\mathbb{C}^\ast\rtimes(\mathbb{Z}/2\mathbb{Z})\big),
$$
and the~surface $E^\prime$ is the~only \mbox{$\mathrm{Aut}(X)$-invariant} prime divisor over $X$.
Hence, it is enough to check that $\beta(E^\prime)>0$ to show that $X$ is K-polystable \cite{Zhuang}.

Fix $u\in\mathbb{R}_{\geqslant 0}$. Then $-K_X-uE^\prime$ is pseudo-effective if and only if $u\leqslant 2$, since
$$
(\eta\circ\phi)^*(-K_X)-uE^\prime\sim_{\mathbb{R}} (2-u)E^\prime+\widehat{E}_1+\widehat{E}_2+2\widehat{F}_1+2\widehat{F}_2.
$$
For~$u\in[0,2]$, let $P(u)$ be the~positive part of the~Zariski decomposition of $-K_X-uE^\prime$,
and let $N(u)$ be its negative part. Note that this decomposition exists on $X$: we have
$$
P(u)\sim_{\mathbb{R}} \left\{\aligned
(2-u)E^\prime+\widehat{E}_1+\widehat{E}_2+2\widehat{F}_1+2\widehat{F}_2 \ \text{ for } 0\leqslant u\leqslant 1, \\
(2-u)\big(E^\prime+\widehat{E}_1+\widehat{E}_2+2\widehat{F}_1+2\widehat{F}_2\big)\ \text{ for } 1\leqslant u\leqslant 2,
\endaligned
\right.
$$
and
$$
N(u)= \left\{\aligned
&0\ \text{ for } 0\leqslant u\leqslant 1, \\
&(u-1)\big(\widehat{E}_1+\widehat{E}_2+2\widehat{F}_1+2\widehat{F}_2\big)\ \text{ for } 1\leqslant u\leqslant 2.
\endaligned
\right.
$$
This gives $S_X(E^\prime)=\frac{1}{24}\int\limits_{0}^{2}\big(P(u)\big)^3du=
\frac{1}{24}\int\limits_{0}^{1}8u^3-24u^2+24du+\frac{1}{24}\int\limits_{1}^{2}8(2-u)^3udu=\frac{5}{6}$,
which implies  that $\beta(E^\prime)=1-S_X(E^\prime)=\frac{1}{6}>0$. This shows that $X$ is K-polystable.
\end{proof}

Hence, to prove Theorem~\ref{theorem:main-2},
we may assume that $\widetilde{X}$ has isolated singularities:
$$
(a:b:c)\neq (1:\pm 1: \pm 1).
$$
Then one of the~following cases holds:
\begin{itemize}
\item $(a:b:c)\not\in\mathbb{E}^{\operatorname{Sing}+}$, and $\widetilde{X}$ is smooth;
\item $(a:b:c)\in\mathbb{E}^{2\mathbb{A}_1+}$, and $\widetilde{X}$ has $2$ ordinary double points;
\item $(a:b:c)\in\mathbb{E}^{4\mathbb{A}_1+}$, and $X$ is toric.
\end{itemize}
If $X$ is toric, it is K-polystable (see Section~\ref{section:2-2-divisors}).
Hence, we may assume that $\widetilde{X}$ is smooth, or  $\widetilde{X}$ has 2 ordinary double points. Set
\begin{eqnarray*}
F_1=\left\{as_1t_1s_2t_2 +\frac{b+c}{4}(s_1^2s_2^2+t_1^2t_2^2)+\frac{b-c}{4}(s_1^2t^2_2+t^2_1s^2_2)=w_1=0\right\} \\
F_2=\left\{as_1t_1s_2t_2 +\frac{b+c}{4}(s_1^2s_2^2+t_1^2t_2^2)+\frac{b-c}{4}(s_1^2t^2_2+t^2_1s^2_2)=w_2=0\right\}.
\end{eqnarray*}
Denote by $\widetilde{F}_1$ and $\widetilde{F}_1$ the~strict transforms of $F_1$ and $F_2$ on $\widetilde{X}$, respectively.~Then
$$
\eta^*\big(-K_X\big)\sim-K_{\widetilde{X}}\sim \widetilde{F}_1+\widetilde{F}_2+E_1+E_2.
$$
If $\widetilde{X}$ is smooth, then $F_1$ and $F_2$ are irreducible.
If $\widetilde{X}$ has $2$ ordinary double points, then these surfaces have two components.

\begin{lemma}
\label{lemma:class-group}
Let $S$ be a~$G$-invariant irreducible surface in $X$.
Then
$$
S\sim_{\mathbb{Q}}\frac{n}{2}(-K_X)
$$
for some $n\in\mathbb{N}$.
\end{lemma}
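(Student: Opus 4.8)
The plan is to exploit the conic bundle structure $\rho\colon X\dashrightarrow\mathbb{P}^1\times\mathbb{P}^1$ from \eqref{CBdiag}, together with the torus $\mathbb{C}^\ast\subset G$ of automorphisms $\sigma_\lambda$. Write $H_1$ and $H_2$ for the restrictions to $X$ of the pullbacks of $\mathcal{O}_{\mathbb{P}(1,1,2)}(1)$ from the first and second factor. Since $X$ is a divisor of degree $(2,2)$ in $\mathbb{P}(1,1,2)\times\mathbb{P}(1,1,2)$ and $-K$ of the ambient product is $\mathcal{O}(4,4)$, adjunction gives
\[
-K_X\sim 2(H_1+H_2).
\]
Thus it suffices to prove that every $G$-invariant irreducible surface $S$ satisfies $S\sim_{\mathbb{Q}} p(H_1+H_2)$ for some $p\in\mathbb{N}$. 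The key point is that $S$, being $G$-invariant, is invariant under the torus $\sigma_\lambda$, and I will show this forces $S$ to be the $\rho$-preimage of a curve in $\mathbb{P}^1\times\mathbb{P}^1$ of balanced bidegree.

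First I would analyse the $\mathbb{C}^\ast$-action on the fibres of $\rho$. A general fibre of $\rho$ is the closure of $\{w_1w_2=Q\}$ with $Q\neq 0$, and this is a single $\sigma_\lambda$-orbit closure: the torus acts simply transitively on the open part where $w_1\neq 0$ and fixes exactly the two boundary points, namely $\big((s_1:t_1:0),(0:0:1)\big)$ and $\big((0:0:1),(s_2:t_2:0)\big)$, which lie on $C_2$ and $C_1$ respectively. Now let $S$ be an irreducible $\sigma_\lambda$-invariant surface. If $\rho|_S$ were generically finite onto $\mathbb{P}^1\times\mathbb{P}^1$, then over a general base point $S$ would meet the fibre in a nonempty finite $\sigma_\lambda$-invariant set, hence in these two boundary points; but as the base point varies these points sweep out only the curves $C_1$ and $C_2$, which cannot cover the surface $S$. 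This contradiction shows $\rho|_S$ has one-dimensional image, so $S$ is vertical: $S=\overline{\rho^{-1}(B)}$ for the irreducible curve $B=\rho(S)\subset\mathbb{P}^1\times\mathbb{P}^1$.

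Next I would pin down the class of $S$. Since $\rho$ is $G$-equivariant and $\tau_1\in G$ swaps the two factors of $\mathbb{P}^1\times\mathbb{P}^1$, the curve $B$ is $\tau_1$-invariant, so its class in $\operatorname{Pic}(\mathbb{P}^1\times\mathbb{P}^1)=\mathbb{Z}^2$ is balanced, say $B\in|\mathcal{O}(p,p)|$ with $p\in\mathbb{N}$. Because $\rho^\ast\mathcal{O}(1,0)=H_1$ and $\rho^\ast\mathcal{O}(0,1)=H_2$ as Weil divisor classes (the indeterminacy locus $C_1\cup C_2$ has codimension two, so linear equivalence pulled back from the base extends to an honest $\mathbb{Q}$-linear equivalence on $X$), the divisor $\overline{\rho^{-1}(B)}$ has class $p(H_1+H_2)=\tfrac{p}{2}(-K_X)$. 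It remains to check that $S$ equals all of $\overline{\rho^{-1}(B)}$. The preimage $\rho^{-1}(B)$ can fail to be irreducible only when $B$ lies in the discriminant $\Delta=\{Q=0\}$, where the conic degenerates into the two lines $\{w_1=0\}$ and $\{w_2=0\}$; the two resulting components then have classes proportional to $H_1$ and to $H_2$ and are interchanged by $\tau_1$, so neither is $G$-invariant. Hence no irreducible $G$-invariant surface occurs as a proper component, $S=\overline{\rho^{-1}(B)}$, and $S\sim_{\mathbb{Q}}\tfrac{p}{2}(-K_X)$ with $n=p\in\mathbb{N}$.

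The main obstacle is precisely this last point: $X$ is not $\mathbb{Q}$-factorial, since the curves $C_1,C_2$ of $cA_1$ singularities enlarge $\operatorname{Cl}(X)$ beyond $\langle H_1,H_2\rangle$ (indeed the surfaces $F_1,F_2$ of classes $2H_1,2H_2$ split in the two-nodal case). One therefore cannot simply invoke that the Picard rank of $X$ equals two to conclude that every invariant class is balanced. The force of the argument comes from restricting to \emph{irreducible} $G$-invariant surfaces and using the $\sigma_\lambda$-action to make them vertical over the base, after which the $\tau_1$-symmetry on $\mathbb{P}^1\times\mathbb{P}^1$ forces the balanced bidegree and hence proportionality to $-K_X$.
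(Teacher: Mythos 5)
The paper offers no proof of this lemma (its proof field reads ``Left to the reader''), so there is nothing to compare against; judged on its own merits, your argument is correct in all essentials and fills the gap. The chain of ideas works: adjunction on the Cartier divisor $X\subset\mathbb{P}(1,1,2)\times\mathbb{P}(1,1,2)$ gives $-K_X\sim 2(H_1+H_2)$; a finite nonempty $\sigma_\lambda$-invariant subset of a general $\rho$-fibre would have to consist of $\sigma_\lambda$-fixed points, which lie on $C_1\cup C_2$, so an invariant irreducible surface $S$ cannot dominate $\mathbb{P}^1\times\mathbb{P}^1$ and is vertical; swap-invariance of $B=\rho(S)$ forces bidegree $(p,p)$; and the pullback computation gives $S\sim p(H_1+H_2)=\tfrac{p}{2}(-K_X)$. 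You also correctly identify why a naive class-group argument fails ($X$ is not $\mathbb{Q}$-factorial). Two assertions you state without proof are genuinely needed but routine: irreducibility of $\overline{\rho^{-1}(B)}$ for $B\not\subset\Delta$ (every component is a surface dominating $B$, hence contains the irreducible general fibre), and multiplicity one of $S$ in the divisor cut on $X$ by the bidegree-$(p,p)$ equation of $B$ ($\rho$ is flat with smooth fibre at a general point of $S$, so the pullback of the reduced curve $B$ is generically reduced).

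One point deserves explicit attention, because both your final step and the truth of the lemma hinge on it. You need an element of $G$ interchanging $\{w_1=0\}$ and $\{w_2=0\}$, and you assert $\tau_1$ does this. The paper's printed $\tau_1$, which sends $\big((s_1:t_1:w_1),(s_2:t_2:w_2)\big)$ to $\big((s_2:t_2:w_1),(s_1:t_1:w_2)\big)$, keeps each $w_i$ attached to its original slot and would \emph{not} interchange these surfaces; under that literal reading the lemma is in fact false, since $F_1=X\cap\{w_1=0\}$ would be a $G$-invariant irreducible surface (for general $(a:b:c)$) of class $2H_1$, and $2H_1$ is not $\mathbb{Q}$-proportional to $H_1+H_2$ because $H_1^3=0$ while $H_1^2\cdot H_2\neq 0$. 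However, the printed formula is not even a well-defined self-map of $\mathbb{P}(1,1,2)\times\mathbb{P}(1,1,2)$: rescaling $(s_1,t_1,w_1)$ by $\mu$ and $(s_2,t_2,w_2)$ by $\nu$ changes the alleged image point unless $\mu^2=\nu^2$. So $\tau_1$ must be read as the honest factor swap $\big((s_1:t_1:w_1),(s_2:t_2:w_2)\big)\mapsto\big((s_2:t_2:w_2),(s_1:t_1:w_1)\big)$, which is exactly what your proof uses implicitly. With that (necessary) correction of the paper's typo, your proof is sound.
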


\begin{proof}
Left to the~reader.
\end{proof}

\begin{lemma}
\label{lemma:G-invariant-curves}
If $\widetilde{X}$ is smooth, then $F_1\cap F_2$ is the~only $G$-invariant irreducible curve~in~$X$.
Similarly, if $\widetilde{X}$ is singular, then $X$ does not contain $G$-invariant irreducible curves.
\end{lemma}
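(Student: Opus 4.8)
The plan is to exploit the subgroup $\mathbb{C}^\ast=\{\sigma_\lambda\}_{\lambda\in\mathbb{C}^\ast}\subset G$: every $G$-invariant irreducible curve is in particular $\mathbb{C}^\ast$-invariant, so I will first classify the $\mathbb{C}^\ast$-invariant irreducible curves on $X$ and then impose invariance under the finite part $\langle\tau_1,\tau_2,\tau_3\rangle$. Writing $q=as_1t_1s_2t_2+\frac{b+c}{4}(s_1^2s_2^2+t_1^2t_2^2)+\frac{b-c}{4}(s_1^2t_2^2+t_1^2s_2^2)$, so that $F_1=\{q=w_1=0\}$ and $F_2=\{q=w_2=0\}$, the first step is to compute the fixed locus of $\mathbb{C}^\ast$. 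Since $\sigma_\lambda$ scales $w_1$ by $\lambda$ and $w_2$ by $\lambda^{-1}$ while preserving the $(s_i:t_i)$, a point of $X$ is $\mathbb{C}^\ast$-fixed precisely when ($w_1=0$ or the first factor is the vertex) and ($w_2=0$ or the second factor is the vertex); intersecting with $X$ and discarding $\big((0:0:1),(0:0:1)\big)\notin X$ yields
$$\mathrm{Fix}(\mathbb{C}^\ast)=\{q=w_1=w_2=0\}\cup C_1\cup C_2=(F_1\cap F_2)\cup C_1\cup C_2,$$
where $\{q=w_1=w_2=0\}$ is the $(2,2)$-curve $\{q=0\}$ inside $\{w_1=w_2=0\}\cong\mathbb{P}^1_{s_1,t_1}\times\mathbb{P}^1_{s_2,t_2}$.

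Next I invoke the dichotomy for a $\mathbb{C}^\ast$-invariant irreducible curve $\Gamma$: either $\Gamma\subset\mathrm{Fix}(\mathbb{C}^\ast)$, or $\Gamma$ is the closure of a single one-dimensional orbit. In the latter case $\Gamma\not\subset C_1\cup C_2$ (the indeterminacy of $\rho$), so $\rho$ is defined on a dense open of $\Gamma$; as $\rho$ is $\mathbb{C}^\ast$-invariant, it contracts $\Gamma$ to a point $p\in\mathbb{P}^1\times\mathbb{P}^1$. For such a $\Gamma$ to be $G$-invariant the point $p$ must be fixed by the induced finite action on the base. But $\tau_1$ interchanges the two factors of $\mathbb{P}^1\times\mathbb{P}^1$, with fixed locus the diagonal, while $\tau_2,\tau_3$ act diagonally as the commuting involutions $(s:t)\mapsto(t:s)$ and $(s:t)\mapsto(s:-t)$, which have no common fixed point on $\mathbb{P}^1$; hence $\langle\tau_1,\tau_2,\tau_3\rangle$ has no fixed point on the base, and no orbit-closure curve can be $G$-invariant.

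It remains to examine the curves inside $\mathrm{Fix}(\mathbb{C}^\ast)$. The involution in $G$ interchanging the two $\mathbb{P}(1,1,2)$-factors swaps $C_1$ and $C_2$, so neither is $G$-invariant. On the other hand $q$ is invariant up to scalar under $\tau_1,\tau_2,\tau_3$, so $F_1\cap F_2=\{q=w_1=w_2=0\}$ is $G$-invariant. When $\widetilde{X}$ is smooth, $\{q=0\}$ is a smooth, hence irreducible, $(2,2)$-curve, and therefore $F_1\cap F_2$ is the unique $G$-invariant irreducible curve, as claimed.

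Finally, when $\widetilde{X}$ is singular, i.e. $(a:b:c)\in\mathbb{E}^{2\mathbb{A}_1+}$, I expect the hard part to be verifying that $\{q=0\}$ degenerates to a reducible $(2,2)$-curve whose two components are exchanged by the finite group, so that $F_1\cap F_2$ has no $G$-invariant component. For example, on the stratum $b+c=0$ one computes $q=\frac{b}{2}\big(s_1t_2+\alpha t_1s_2\big)\big(s_1t_2+\alpha^{-1}t_1s_2\big)$ with $\alpha+\alpha^{-1}=2a/b$ and $\alpha\neq\pm1$, and $\tau_2$ exchanges the two linear factors; the remaining strata of $\mathbb{E}^{2\mathbb{A}_1+}$ listed in Proposition~\ref{proposition:stratify2nd} are treated identically. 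Together with the previous two paragraphs this shows that $X$ contains no $G$-invariant irreducible curve in the singular case. The principal obstacle is precisely this factorisation-and-swap analysis, which must be run through each $2\mathbb{A}_1$ stratum; everything else reduces to the fixed-point computation on the base $\mathbb{P}^1\times\mathbb{P}^1$.
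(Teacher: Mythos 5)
Your proposal is correct and follows essentially the same route as the paper's (much terser) two-sentence proof: both rest on the observation that, since the finite part of $G$ has no fixed points on the base $\mathbb{P}^1\times\mathbb{P}^1$, any $G$-invariant irreducible curve must be pointwise fixed by the $\mathbb{C}^\ast$-action, and then analyze the fixed locus $(F_1\cap F_2)\cup C_1\cup C_2$. Your explicit fixed-locus computation, the exclusion of $C_1$ and $C_2$ via the factor swap, and the factorisation-and-swap analysis of $\{q=0\}$ in the nodal case are precisely the details the paper leaves implicit in the assertions ``$Z=F_1\cap F_2$'' and ``otherwise $F_1\cap F_2$ would be reducible''.
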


\begin{proof}
If $Z$ is a~$G$-invariant irreducible curve in $X$,
ut is pointwise fixed by the~\mbox{$\mathbb{C}^\ast$-action}, because $G$ does not fix points in $\mathbb{P}^1\times\mathbb{P}^1$.
Then $Z=F_1\cap F_2$, so $\widetilde{X}$ is smooth, since otherwise the~intersection $F_1\cap F_2$ would be reducible.
\end{proof}

Now, we are ready to prove that $X$ is K-polystable.
Suppose that  $X$ is not K-polystable.
Then, by \cite{Fujita2019,Li,Zhuang}, there exists a~$G$-invariant prime divisor $\mathbf{F}$ over $X$ such that $\beta(\mathbf{F})\leqslant 0$.
Let $Z$ be the~center of this divisor on $X$.
Then $Z$ is not a~point, since $G$ does not fix~points.
Similarly, we see that $Z$ is not a~surface by Lemma~\ref{lemma:class-group}.
Thus, we see that $Z$ is a~curve.
Then $\widetilde{X}$ is smooth, and $Z=F_1\cap F_2$ by Lemma~\ref{lemma:G-invariant-curves}.

Let $\widetilde{Z}$ be the~strict transform on $\widetilde{X}$ of the~curve $Z$,
let $\xi\colon\widehat{X}\to \widetilde{X}$ be the~ordinary blow-up along $\widetilde{Z}$,
and let $E$ be the~exceptional divisor of $\xi$.
Then the~$G$-action lifts to $\widehat{X}$,
and $E$ does not contains $G$-invariant irreducible curves.
Hence, $E$ is the~only $G$-invariant prime divisor over $X$.
So, it follows from \cite{Zhuang} that $\beta(E)\leqslant 0$.
Let us compute $\beta(E)$.

Fix $u\in\mathbb{R}_{\geqslant 0}$.
If $u\in[0,1]$, then $\xi^*(-K_X)-uE$ is nef.
If $u\in[1,2]$, then the~positive part of the~Zariski decomposition of $\xi^*(-K_X)-uE$ is
$$
(u-1)\left(\widehat{E}_1+\widehat{E}_2+\widehat{F}_1+\widehat{F}_2\right),
$$
where $\widehat{E}_1$, $\widehat{E}_2$, $\widehat{F}_1$, $\widehat{F}_2$ are the~strict transforms of the~surfaces $E_1$, $E_2$, $F_1$, $F_2$, respectively.
This gives
$$
\operatorname{vol}\left(\xi^*(-K_X)-u E\right)=\begin{cases}
8(3-3u^2+u^3) & \text{if } u\in[0,1], \\
8(2-u)^3 & \text{if } u\in[0,2].
\end{cases}
$$
Thus, integrating, we get $S_X(E)=\frac{5}{6}<A_X(E)=2$, so $\beta(E)>0$ --- a~contradiction.

This finishes the~proof of Theorem~\ref{theorem:main-2}.

\section{K-moduli space}
\label{section:K-moduli}

To describe the~scheme-structure of the~K-moduli space,
we need the~following result:

\begin{proposition}
\label{proposition:deform}
Let $X$ be a~complete intersection in $\mathbb{P}^3\times\mathbb{P}^3$ of three divisors that have degrees $(2,0)$, $(0,2)$ and $(1,1)$.
Suppose that $X$ has canonical Gorenstein singularities.
Then the~deformations of $X$ are unobstructed.
\end{proposition}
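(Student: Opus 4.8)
The plan is to show that the obstruction space for abstract deformations of $X$ vanishes. Write $Y=\mathbb{P}^3\times\mathbb{P}^3$ and let $N=N_{X/Y}$ be the normal bundle. Since $X$ is cut out in the smooth variety $Y$ by a regular sequence of three equations of bidegrees $(2,0)$, $(0,2)$, $(1,1)$, it is a local complete intersection, so its cotangent complex $\mathbb{L}_X$ is the two-term complex $[N^\vee\to\Omega_Y|_X]$ in degrees $[-1,0]$, and the tangent complex $\mathbb{R}\mathcal{H}om(\mathbb{L}_X,\mathcal{O}_X)$ is $[T_Y|_X\to N]$ in degrees $[0,1]$. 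In particular $\mathcal{E}xt^i(\mathbb{L}_X,\mathcal{O}_X)=0$ for $i\geq 2$, so all obstructions are global and lie in $T^2(X)=\mathbb{H}^2(X,[T_Y|_X\to N])$; unobstructedness follows once $T^2(X)=0$. The truncation triangle of this two-term complex yields the exact sequence
\[
H^1(X,N)\longrightarrow T^2(X)\longrightarrow H^2(X,T_Y|_X),
\]
so it suffices to prove $H^1(X,N)=0$ and $H^2(X,T_Y|_X)=0$. Here $N=\mathcal{O}_X(2,0)\oplus\mathcal{O}_X(0,2)\oplus\mathcal{O}_X(1,1)$ and $T_Y|_X=\big(\operatorname{pr}_1^*T_{\mathbb{P}^3}\oplus\operatorname{pr}_2^*T_{\mathbb{P}^3}\big)|_X$.

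Both vanishings I would extract from the Koszul resolution of $\mathcal{O}_X$ on $Y$,
\[
0\to\mathcal{O}_Y(-3,-3)\to\mathcal{O}_Y(-1,-3)\oplus\mathcal{O}_Y(-3,-1)\oplus\mathcal{O}_Y(-2,-2)\to\mathcal{O}_Y(-2,0)\oplus\mathcal{O}_Y(0,-2)\oplus\mathcal{O}_Y(-1,-1)\to\mathcal{O}_Y\to\mathcal{O}_X\to 0,
\]
combined with Künneth and Bott's formula on $\mathbb{P}^3\times\mathbb{P}^3$. For $H^1(X,N)$ I would twist this resolution by each summand $\mathcal{O}_Y(2,0)$, $\mathcal{O}_Y(0,2)$, $\mathcal{O}_Y(1,1)$ and run the hypercohomology spectral sequence. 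This is very clean: on $\mathbb{P}^3$ one has $H^1=H^2=0$ for every line bundle, so on $Y$ every $\mathcal{O}_Y(p,q)$ has cohomology concentrated in degrees $0,3,6$, and a short pass over the twisted terms shows that the surviving groups (all of type $H^0$) contribute only in degree $0$; thus $H^1(X,N)=0$.

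The main work is $H^2(X,T_Y|_X)=0$, which I treat one factor at a time, say $\operatorname{pr}_1^*T_{\mathbb{P}^3}|_X$. Twisting the Koszul resolution and applying Künneth, every term becomes a sum of sheaves $T_{\mathbb{P}^3}(p)\boxtimes\mathcal{O}_{\mathbb{P}^3}(q)$ in which the first twists $p$ range only over $\{0,-1,-2,-3\}$. By the Euler sequence, $H^a(\mathbb{P}^3,T_{\mathbb{P}^3}(p))$ is nonzero for these $p$ only when $a=0$ (and $p\in\{0,-1\}$), while $H^b(\mathbb{P}^3,\mathcal{O}(q))$ is nonzero only for $b\in\{0,3\}$; hence every term of the twisted complex has cohomology concentrated in total degrees $0$ and $3$, and the unique candidate contribution to $\mathbb{H}^2$, coming from $H^3$ of the degree $(-2,0),(0,-2),(-1,-1)$ Koszul step, vanishes since each summand there carries a factor $\mathcal{O}_{\mathbb{P}^3}(-1)$ or $\mathcal{O}_{\mathbb{P}^3}(-2)$ with no cohomology. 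The hardest point is precisely this last verification — confirming that no twist $p$ reaches $-4$, where $T_{\mathbb{P}^3}(p)$ would acquire an unwanted $H^2$, and keeping the spectral-sequence indexing straight. With both vanishings in hand, the exact sequence above forces $T^2(X)=0$, so the deformations of $X$ are unobstructed. (The complete-intersection hypothesis is what supplies the regular sequence and the lci structure driving the argument; the assumption that $X$ has canonical Gorenstein singularities ensures in addition that $X$ is normal and integral, so that the deformation problem and the Koszul complex are well behaved.)
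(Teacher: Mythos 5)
Your proof is correct, but it takes a genuinely different route from the paper's. The paper reduces unobstructedness to the vanishing of $\operatorname{Ext}^2_{\mathcal{O}_X}\left(\Omega^1_X,\mathcal{O}_X\right)$ by citing Sernesi and Sano, converts this by Serre duality into $H^1\left(X,\Omega^1_X\otimes\omega_X\right)=0$, kills the conormal contribution $H^i\left(X,\mathcal{N}^\vee_{X/Y}\otimes\omega_X\right)$ for $i=1,2$ by a Kodaira-type (Kawamata--Viehweg) vanishing on the singular canonical Gorenstein $X$, and then peels off the ambient divisors $Q_1$ and $V=Q_1\cap Q_2$ through restriction sequences until everything lives on $\mathbb{P}^3\times\mathbb{P}^3$, where the Euler sequence finishes the job. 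You instead stay on the tangent side: using the lci cotangent complex you identify the obstruction space with $\mathbb{H}^2\left(X,[T_Y|_X\to N]\right)$, split it by the truncation triangle into $H^1(X,N)$ and $H^2\left(X,T_Y|_X\right)$, and compute both directly upstairs with the Koszul resolution, K\"unneth, and the Euler sequence. What your route buys is elementarity of the vanishing arguments: you never need Serre duality or any vanishing theorem on the singular $X$ itself, only line-bundle cohomology of $\mathbb{P}^3$ (in fact your argument does not use the canonical Gorenstein hypothesis at all beyond the complete-intersection structure, so it proves a slightly more general statement); the cost is invoking the general cotangent-complex obstruction theory where the paper relies on the more classical Sernesi/Sano framework for $\operatorname{Ext}^2\left(\Omega^1_X,\mathcal{O}_X\right)$ --- the two obstruction spaces agree here because $X$ is a reduced lci, so $\mathbb{L}_X\simeq\Omega^1_X$. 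One cosmetic slip: in your treatment of $H^3$ of the first Koszul step, the summand $T_{\mathbb{P}^3}(-2)\boxtimes\mathcal{O}_{\mathbb{P}^3}$ carries no line-bundle factor $\mathcal{O}_{\mathbb{P}^3}(-1)$ or $\mathcal{O}_{\mathbb{P}^3}(-2)$; it dies instead because $T_{\mathbb{P}^3}(-2)$ itself has no cohomology, as your own table records, so the conclusion stands.
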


\begin{proof}
The proof is almost same as the~proof of
\cite[Proposition 2.9]{CheltsovDuarteFujitaKrylovMartinez}.
Namely, it follows from
\cite[Theorem~2.3.2 and~2.4.1]{Sernesi},
\cite[Corollary 2.4.2]{Sernesi},
\cite[Propositions 2.4 and 2.6]{Sano} that
it is enough to show that
$\operatorname{Ext}^2_{\mathcal{O}_X}\left(\Omega_X^1,\mathcal{O}_X\right)=0$.
As in \cite[\S 1.2]{Sano}, we know that
$$
\operatorname{Ext}^2_{\mathcal{O}_X}\left(\Omega_X^1,\mathcal{O}_X\right)
\simeq H^1\left(X, \Omega_X^1\otimes \omega_X\right)^\vee.
$$
From the~exact sequence
$$
0\to\mathcal{N}^\vee_{X/\mathbb{P}^3\times\mathbb{P}^3}\to
\Omega^1_{\mathbb{P}^3\times\mathbb{P}^3}|_X\to\Omega_X^1\to 0,
$$
we have
$$
H^1\left(X, \Omega^1_{\mathbb{P}^3\times\mathbb{P}^3}\big|_X\otimes\omega_X\right)\simeq H^1\left(X, \Omega_X^1\otimes \omega_X\right).
$$
Indeed, for $i\in\{1,2\}$, we have
\begin{multline*}
H^i\left(X, \mathcal{N}^\vee_{X/\mathbb{P}^3\times\mathbb{P}^3}\otimes\omega_X\right)\simeq\\
\simeq H^i\left(X, \mathcal{O}_X(-2,-2)\right)\oplus H^i\left(X, \mathcal{O}_X(-3,-1)\right)\oplus H^i\left(X, \mathcal{O}_X(-1,-3)\right)=0
\end{multline*}
by the~Kodaira-type vanishing theorem, where
$\mathcal{O}_X(a,b)$ is the~restriction to $X$ of the~line bundle  on $\mathbb{P}^3\times\mathbb{P}^3$ of degree $(a,b)$.

Let $Q_1$ and $Q_2$ be the~divisors in $\mathbb{P}^3\times\mathbb{P}^3$ containing $X$ that have degree $(2,0)$ and~$(0,2)$, respectively.
Set $V=Q_1\cap Q_2$. Then it follows from the~exact sequence of sheaves
$$
0\to\mathcal{O}_{V}(-1,-1)\to\mathcal{O}_{V}\to \mathcal{O}_X\to 0
$$
that we have an~exact sequence of cohomology groups
\begin{multline*}
H^1\left(V,\left(\Omega^1_{\mathbb{P}^3\times\mathbb{P}^3}\otimes\mathcal{O}_{\mathbb{P}^3\times\mathbb{P}^3}(-1,-1)\right)|_{V}\right)\to\\
\to H^1\left(X, \Omega^1_{\mathbb{P}^3\times\mathbb{P}^3}|_X\otimes\omega_X\right)\to\\
\to H^2\left(V, \left(\Omega^1_{\mathbb{P}^3\times\mathbb{P}^3}\otimes\mathcal{O}_{\mathbb{P}^3\times\mathbb{P}^3}(-2,-2)\right)|_{V}\right).
\end{multline*}
Now, using the~exact sequences
$$
0\to\mathcal{O}_{Q_1}(0,-2)\to\mathcal{O}_{Q_1}\to\mathcal{O}_{V}\to 0,
$$
and
$$
0\to\mathcal{O}_{\mathbb{P}^3\times \mathbb{P}^3}(-2,0)\to\mathcal{O}_{\mathbb{P}^3\times\mathbb{P}^3}\to\mathcal{O}_{Q_1}\to 0,
$$
we get the~exact sequences
\begin{multline*}
H^i\left(Q_1,\left(\Omega_{\mathbb{P}^3\times\mathbb{P}^3}^1\otimes\mathcal{O}_{\mathbb{P}^3\times\mathbb{P}^3}(-i,-i)\right)|_{Q_1}\right)\to\\
\to H^i\left(V, \left(\Omega_{\mathbb{P}^3\times\mathbb{P}^3}^1\otimes\mathcal{O}_{\mathbb{P}^3\times\mathbb{P}^3}(-i,-i)\right)|_{V}\right)\to\\
\to H^{i+1}\left(Q_1,\left(\Omega_{\mathbb{P}^3\times\mathbb{P}^3}^1\otimes\mathcal{O}_{\mathbb{P}^3\times\mathbb{P}^3}(-i,-i-2)\right)|_{Q_1}\right),
\end{multline*}
\begin{multline*}
H^i\left(\mathbb{P}^3\times\mathbb{P}^3,\Omega_{\mathbb{P}^3\times\mathbb{P}^3}^1\otimes\mathcal{O}_{\mathbb{P}^3\times\mathbb{P}^3}(-i,-i)\right)\to\\
\to H^i\left(Q\times\mathbb{P}^3,\left(\Omega_{\mathbb{P}^3\times\mathbb{P}^3}^1\otimes\mathcal{O}_{\mathbb{P}^3\times\mathbb{P}^3}(-i,-i)\right)|_{Q_1}\right)\to\\
\to H^{i+1}\left(\mathbb{P}^3\times\mathbb{P}^3, \Omega^1_{\mathbb{P}^3\times\mathbb{P}^3}\otimes\mathcal{O}_{\mathbb{P}^3\times\mathbb{P}^3}(-i-2,-i)\right),
\end{multline*}
and
\begin{multline*}
H^{i+1}\left(\mathbb{P}^3\times\mathbb{P}^3,\Omega^1_{\mathbb{P}^3\times\mathbb{P}^3}\otimes\mathcal{O}_{\mathbb{P}^3\times\mathbb{P}^3}(-i,-i-2)\right)\to\\
\to H^{i+1}\left(Q_1,\left(\Omega^1_{\mathbb{P}^3\times\mathbb{P}^3}\otimes\mathcal{O}_{\mathbb{P}^3\times\mathbb{P}^3}(-i,-i-2)\right)|_{Q_1}\right)\to\\
\to H^{i+2}\left(\mathbb{P}^3\times\mathbb{P}^3,\Omega^1_{\mathbb{P}^3\times\mathbb{P}^3}\otimes\mathcal{O}_{\mathbb{P}^3\times\mathbb{P}^3}(-i-2,-i-2)\right)
\end{multline*}
for $i\in\{1,2\}$. Thus, it is enough to show that
\begin{align*}
H^i\left(\mathbb{P}^3\times\mathbb{P}^3,\Omega_{\mathbb{P}^3\times\mathbb{P}^3}^1\otimes\mathcal{O}_{\mathbb{P}^3\times\mathbb{P}^3}(-i,-i)\right)&=0,\\
H^{j+1}\left(\mathbb{P}^3\times\mathbb{P}^3,\Omega_{\mathbb{P}^3\times\mathbb{P}^3}^1\otimes\mathcal{O}_{\mathbb{P}^3\times\mathbb{P}^3}(-j,-j-2)\right)&=0
\end{align*}
for $i=1,2,3,4$ and $j=1,2$, which follows from the~Euler exact sequence
$$
0\to\Omega^1_{\mathbb{P}^3\times\mathbb{P}^3}
\to\mathcal{O}_{\mathbb{P}^3\times\mathbb{P}^3}(-1,0)^{\oplus 4}\oplus
\mathcal{O}_{\mathbb{P}^3\times\mathbb{P}^3}(0,-1)^{\oplus 4}\to
\mathcal{O}_{\mathbb{P}^3\times\mathbb{P}^3}^{\oplus 2}\to 0.
$$
This completes the~proof of the~proposition.
\end{proof}

Now, let  $\mathbf{M}$ be the~irreducible component of the~K-moduli space $\mathrm{M}^\mathrm{Kps}_{3,24}$
whose  points parametrise smooth Fano 3-folds in the~family \textnumero 4.1 and their K-polystable degenerations.
Recall from Section~\ref{section:geometry} that the~GIT moduli space of divisors of degree $(1,1,1,1)$ in $(\mathbb{P}^1)^4$
for the~natural action of the~group $\Gamma=\operatorname{SL}^4_4(\mathbb{C})\rtimes\mathfrak{S}_4$ is isomorphic to
$$
\mathbb{P}^3_{a,b,c,d}\slash \big(W(\mathrm{F}_4)\big/\langle -1\rangle\big)\simeq\mathbb{P}(1,3,4,6),
$$
where $\mathbb{P}^3_{a,b,c,d}$ is identified with the~projectivisation of the~vector space of forms \eqref{form:1},
and the~action of the~finite group $W(\mathrm{F}_4)/\langle -1\rangle\simeq(\mathfrak{A}_4\times\mathfrak{A}_4)\rtimes(\mathbb{Z}/2\mathbb{Z})^2$ together with the~corresponding quotient map $\mathbb{P}^3_{a,b,c,d}\to\mathbb{P}(1,3,4,6)$
are explicitly described in Section~\ref{section:geometry}.
Recall that the~orbit of the~point $(0:0:0:1)\in\mathbb{P}_{a,b,c,d}$ parametrise reducible divisors.

\begin{theorem}
\label{theorem:main}
The following assertions hold:
\begin{itemize}
\item there exists $W(\mathrm{F}_4)/\langle -1\rangle$-equivariant commutative diagram
$$
\xymatrix{
\mathbb{B}\ar@{->}[d]\ar@{->}[rr]&&\mathbf{M}\ar@{->}[d]\\%
\mathbb{P}^3_{a,b,c,d}\ar@{->}[rr]&& \mathbb{P}(1,3,4,6)}
$$
where $\mathbb{B}\to\mathbb{P}^3_{a,b,c,d}$ is the~blow up of the~$W(\mathrm{F}_4)/\langle -1\rangle$-orbit of the~point $(0:0:0:1)$,
both horizontal arrows are the~quotients by the~action of the~group $W(\mathrm{F}_4)/\langle -1\rangle$,
and $\mathbf{M}\to\mathbb{P}(1,3,4,6)$ is a~weighted blow up of a~smooth point with weights $(1,2,3)$,
\item $\mathbf{M}$ is a~connected component of $\mathrm{M}^\mathrm{Kps}_{3,24}$.
\end{itemize}
\end{theorem}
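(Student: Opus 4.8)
The plan is to construct the morphism $\mathbb{B}\to\mathbf{M}$ out of the family $\mathcal{X}\to\mathbb{B}$, show that its fibres are exactly the $W(\mathrm{F}_4)/\langle -1\rangle$-orbits, and then deduce both the diagram and the connected-component statement. First I would recall that, by Theorems~\ref{theorem:main-1} and~\ref{theorem:main-2}, every fibre of $\mathcal{X}\to\mathbb{B}$ is K-polystable: over $\mathbb{B}\setminus\mathbb{E}$ these are the irreducible divisors \eqref{equation:form-1}, and over the exceptional locus $\mathbb{E}=\mathbb{P}^2_{a,b,c}$ they are the divisors \eqref{equation:form-2}. The family therefore induces a morphism $\mathbb{B}\to\mathrm{M}^{\mathrm{Kps}}_{3,24}$ with image $\mathbf{M}$. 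Since the blown-up centre is the $W(\mathrm{F}_4)/\langle -1\rangle$-orbit of $(0:0:0:1)$, the group action lifts to $\mathbb{B}$ and this morphism is $W(\mathrm{F}_4)/\langle -1\rangle$-invariant, so the square in the statement commutes.

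Next I would analyse the fibres of $\mathbb{B}\to\mathbf{M}$, which collapse exactly the isomorphism classes of the parametrised 3-folds. For the generic stratum, Lemma~\ref{lemma:aut} shows that two divisors \eqref{equation:form-1} are isomorphic precisely when their forms lie in one $\Gamma$-orbit, i.e. in one $W(\mathrm{F}_4)/\langle -1\rangle$-orbit on $\mathbb{P}^3_{a,b,c,d}$. On the exceptional divisor the corresponding statement follows from the description of $\mathrm{Aut}\big(X_{(a:b:c)}\big)$ for \eqref{equation:form-2} combined with Remark~\ref{remark:1-2-3}, which identifies the stabiliser $\Theta$ of $(0:0:0:1)$ as acting on $\mathbb{E}$ through $\mathfrak{S}_4$ with $\mathbb{E}/\Theta\simeq\mathbb{P}(1,2,3)$. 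Hence the fibres of $\mathbb{B}\to\mathbf{M}$ are exactly the $W(\mathrm{F}_4)/\langle -1\rangle$-orbits, and the map factors through a finite bijective morphism $\mathbb{B}/(W(\mathrm{F}_4)/\langle -1\rangle)\to\mathbf{M}$.

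To obtain the connected-component assertion I would show that $\mathbf{M}$ is both closed and open in $\mathrm{M}^{\mathrm{Kps}}_{3,24}$. Closedness is immediate, since $\mathbf{M}$ is the image of the proper variety $\mathbb{B}$ in a projective scheme, and it is irreducible. For openness I would invoke Proposition~\ref{proposition:deform}: every 3-fold parametrised by $\mathbf{M}$ is a complete intersection in $\mathbb{P}^3\times\mathbb{P}^3$ of divisors of degrees $(2,0)$, $(0,2)$, $(1,1)$ with canonical Gorenstein singularities, so its deformations are unobstructed and its Kuranishi space is smooth. Because the intrinsic polarisation recovers this embedding (by the Mori-cone argument of Lemma~\ref{lemma:aut}), every nearby K-polystable Fano 3-fold again has one of the two normal forms, so the family $\mathcal{X}\to\mathbb{B}$ is locally complete and the local model of $\mathrm{M}^{\mathrm{Kps}}_{3,24}$ at each $[X]$ is the quotient of the smooth Kuranishi space by the reductive group $\mathrm{Aut}(X)$ --- which is exactly $\mathbf{M}$ near $[X]$. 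Thus $\mathbf{M}$ is open, hence a connected component, and in particular normal, as $\mathrm{M}^{\mathrm{Kps}}_{3,24}$ is normal. Zariski's main theorem then upgrades the bijection of the previous paragraph to an isomorphism $\mathbb{B}/(W(\mathrm{F}_4)/\langle -1\rangle)\xrightarrow{\sim}\mathbf{M}$, giving the top arrow of the diagram. Passing to quotients, $\mathbf{M}\to\mathbb{P}(1,3,4,6)$ is the morphism induced by $\mathbb{B}\to\mathbb{P}^3_{a,b,c,d}$; its centre is the image $(2:2:0:0)$ of the reducible locus (Theorem~\ref{theorem:sing}(2)), a smooth point of $\mathbb{P}(1,3,4,6)$ because the weight-one coordinate $H$ is non-zero there, and the exceptional divisor $\mathbb{E}/\Theta\simeq\mathbb{P}(1,2,3)$ identifies it as the weighted blow-up with weights $(1,2,3)$ (see Proposition~\ref{proposition:toric}).

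The main obstacle will be the openness in the third paragraph. Proposition~\ref{proposition:deform} supplies smoothness of the deformation space, but one must still verify \emph{local completeness}: that no K-polystable Fano 3-fold of degree $24$ near a point of $\mathbf{M}$ escapes the two normal forms, and that the local structure of the K-moduli space is the expected good-moduli quotient of this smooth space by $\mathrm{Aut}(X)$. Reconciling the smooth Kuranishi chart with the local K-moduli chart, and ruling out deformations that leave the complete-intersection format, is the delicate point on which the connected-component conclusion rests.
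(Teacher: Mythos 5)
Your overall skeleton---using Theorems \ref{theorem:main-1} and \ref{theorem:main-2} to produce a morphism $\mathbb{B}\to\mathrm{M}^\mathrm{Kps}_{3,24}$ from the family $\mathcal{X}\to\mathbb{B}$, descending to the quotient by $W(\mathrm{F}_4)/\langle -1\rangle$, and finishing with Proposition \ref{proposition:deform} and Zariski's main theorem---is the paper's. But the step on which the connected-component assertion rests, openness of $\mathbf{M}$, has a genuine gap, and it is the one you flag yourself. Your justification of ``local completeness'' is that the intrinsic polarisation recovers the embedding ``by the Mori-cone argument of Lemma \ref{lemma:aut}'', so that every nearby K-polystable Fano again has one of the two normal forms. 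Lemma \ref{lemma:aut}, however, compares two 3-folds that are \emph{already given} as divisors of degree $(1,1,1,1)$ in $(\mathbb{P}^1)^4$; it cannot tell you that an abstract K-polystable Fano corresponding to a nearby moduli point is such a divisor (or a divisor in $(\mathbb{P}(1,1,2))^2$). Proving that is exactly a comparison of embedded versus abstract deformations: one would need, say, surjectivity of $H^0\big(X,\mathcal{N}_{X/\mathbb{P}^3\times\mathbb{P}^3}\big)\to\operatorname{Ext}^1_{\mathcal{O}_X}\big(\Omega^1_X,\mathcal{O}_X\big)$, which neither you nor the paper establishes. (Theorem \ref{theorem:Kss} gives normal forms only for Fanos already known to admit a $\mathbb{Q}$-Gorenstein smoothing to a member of family \textnumero 4.1, so invoking it here would be circular.)

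The paper closes this gap without ever proving local completeness. The induced morphism $\mathbb{B}\sslash\big(W(\mathrm{F}_4)/\langle -1\rangle\big)\to\mathrm{M}^\mathrm{Kps}_{3,24}$ is finite and \emph{birational onto its image} $Z$; generic injectivity is exactly Lemma \ref{lemma:aut}, so your extra fibre analysis over $\mathbb{E}$ (which would anyway require an unproved analogue of Lemma \ref{lemma:aut} for divisors in $(\mathbb{P}(1,1,2))^2$) is not needed. By Proposition \ref{proposition:deform} the deformations of every parametrised 3-fold are unobstructed, so \'etale-locally at each point of $Z$ the K-moduli space is the good quotient of a \emph{smooth} deformation space by a reductive automorphism group, hence normal there. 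Normality forces each point of $Z$ to lie on a unique irreducible component of $\mathrm{M}^\mathrm{Kps}_{3,24}$; at a point parametrising a smooth member that component has dimension $h^1(X,T_X)=3=\dim Z$, so it coincides with the closed irreducible 3-fold $Z$; and normality along $Z$ then prevents any other component from meeting $Z$. Thus $Z$ is a connected component and is normal, and Zariski's main theorem upgrades the finite birational morphism to an isomorphism---openness comes out as a consequence, not as an input. Note also that your parenthetical ``as $\mathrm{M}^\mathrm{Kps}_{3,24}$ is normal'' is not known in general (and not needed): normality is a local statement along $Z$ supplied by Proposition \ref{proposition:deform}.
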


\begin{proof}
In Section~\ref{section:2-2-divisors}, we have constructed a~flat deformation family
$\mathcal{X}\to\mathbb{B}$ of K-polystable 3-folds of the~forms \eqref{equation:form-1} or \eqref{equation:form-2}.
Thus, we can canonically get a~finite morphism
$$
\mathbb{B}\to \mathrm{M}^\mathrm{Kps}_{3,24}.
$$
Since the~rational map $\mathbb{P}^3_{a,b,c,d}\dasharrow \mathrm{M}^\mathrm{Kps}_{3,24}$ and
the blow-up $\mathbb{B}\to \mathbb{P}^3_{a,b,c,d}$ are both equivariant for the~$W(\mathrm{F}_4)/\langle -1\rangle$-action,
the~composition $\mathbb{B}\to \mathrm{M}^\mathrm{Kps}_{3,24}$ is also $W(\mathrm{F}_4)/\langle -1\rangle$-equivariant.
By the~universal property for the~quotient of finite groups,
we get the~morphism
$$
\mathbb{B}\slash \big(W(\mathrm{F}_4)\big/\langle -1\rangle\big)\to \mathrm{M}^\mathrm{Kps}_{3,24}.
$$
This morphism is finite, and it is birational onto its~image.
Moreover, by Proposition~\ref{proposition:deform}, every point in the~image is normal,
so the~morphism is an~isomorphism onto a~connected component $\mathbf{M}$ of $\mathrm{M}^\mathrm{Kps}_{3,24}$ by Zariski's main theorem.
\end{proof}

Let us conclude this section with a~more detailed~structure of the~moduli space $\mathbf{M}$.

\begin{proposition}\label{proposition:toric}
The 3-fold $\mathbf{M}$ is isomorphic to the~weighted blow-up of $\mathbb{P}(1_{H},3_{R},4_{S},6_{T})$
at the~point $(2:2:0:0)$ with weights $(1,2,3)$ at the~regular sequence of parameters
$$
\mathbf{r}=\frac{4R-H^3}{H^3}, \quad
\mathbf{s}=\frac{S}{H^4}, \quad
\mathbf{t}=\frac{T}{H^6},
$$
and $\mathbf{M}$ is isomorphic to the~toric 3-fold $X_{\Sigma}$,  where $\Sigma$ is the~3-dimensional fan whose one-dimensional cones have primitive generators
$$
\mathbf{v}_0=(1,2,3), \quad
\mathbf{v}_1=(1,0,0), \quad
\mathbf{v}_2=(0,1,0), \quad
\mathbf{v}_3=(0,0,1), \quad
\mathbf{v}_4=(-3,-4,-6),
$$
and whose three-dimensional cones are
\begin{align*}
\operatorname{Cone}\{\mathbf{v}_0,\mathbf{v}_1,\mathbf{v}_2\},&\quad\operatorname{Cone}\{\mathbf{v}_0,\mathbf{v}_1,\mathbf{v}_3\},\\
\operatorname{Cone}\{\mathbf{v}_0,\mathbf{v}_2,\mathbf{v}_3\},&\quad \operatorname{Cone}\{\mathbf{v}_4, \mathbf{v}_1,\mathbf{v}_2\},\\
\operatorname{Cone}\{\mathbf{v}_4, \mathbf{v}_1, \mathbf{v}_3\},&\quad\operatorname{Cone}\{\mathbf{v}_4, \mathbf{v}_2, \mathbf{v}_3\}.
\end{align*}
\end{proposition}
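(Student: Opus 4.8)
The plan is to build on Theorem~\ref{theorem:main}, which already identifies $\mathbf{M}$ with a weighted blow-up of a \emph{smooth} point of $\mathbb{P}(1,3,4,6)$ with weights $(1,2,3)$; the remaining work is to pin down the centre, the adapted local coordinates, and the weighting, and then to translate the answer into toric language. First I would locate the centre. By Theorem~\ref{theorem:sing}$(2)$ the reducible locus $\mathbb{P}^{\operatorname{Red}}$---which is exactly the $W(\mathrm{F}_4)/\langle-1\rangle$-orbit of $(0:0:0:1)$ that is blown up in Theorem~\ref{theorem:main}---maps under $\phi$ to the single point $(2:2:0:0)\in\mathbb{P}(1_H,3_R,4_S,6_T)$. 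Since $H$ has weight $1$, the chart $\{H\neq 0\}$ is isomorphic to $\mathbb{A}^3$ with coordinates $R/H^3,\,S/H^4,\,T/H^6$; in particular $(2:2:0:0)$ is a smooth point, and $\mathbf{r}=4R/H^3-1$, $\mathbf{s}=S/H^4$, $\mathbf{t}=T/H^6$ are affine coordinates on this chart which all vanish at $(2:2:0:0)$. Hence $(\mathbf{r},\mathbf{s},\mathbf{t})$ is a regular system of parameters there.

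Next I would identify the weighting. Working in the chart $\{d\neq 0\}$ of $\mathbb{P}^3_{a,b,c,d}$ (set $d=1$), the blow-up $\mathbb{B}\to\mathbb{P}^3$ is the ordinary blow-up of $\mathbb{A}^3_{a,b,c}$ at the origin, and its exceptional divisor $\mathbb{E}=\mathbb{P}^2_{a,b,c}$ carries the valuation $\operatorname{ord}_{\mathbb{E}}$ with $\operatorname{ord}_{\mathbb{E}}(a)=\operatorname{ord}_{\mathbb{E}}(b)=\operatorname{ord}_{\mathbb{E}}(c)=1$. Substituting $(p_1,p_2,p_3,p_4)=(a^2,b^2,c^2,1)$ into the formulas of Proposition~\ref{proposition:explicit} and expanding, one checks that the leading terms of $\mathbf{r},\mathbf{s},\mathbf{t}$ in $a,b,c$ are homogeneous of degrees $2,4,6$ respectively and are algebraically independent (they are, up to scalars, the basic invariants of degrees $2,4,6$ of the reflection group $\Theta\simeq W(\mathrm{B}_3)$ of Remark~\ref{remark:1-2-3}). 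Thus $\operatorname{ord}_{\mathbb{E}}(\mathbf{r})=2$, $\operatorname{ord}_{\mathbb{E}}(\mathbf{s})=4$, $\operatorname{ord}_{\mathbb{E}}(\mathbf{t})=6$. The quotient map $\mathbb{B}\to\mathbf{M}$ is ramified along $\mathbb{E}$ with index $2$, because the kernel $\mathbb{Z}/2\mathbb{Z}$ of the $\Theta$-action on $\mathbb{E}$ fixes $\mathbb{E}$ pointwise and acts by $-1$ on the normal direction; dividing by this index yields $\operatorname{ord}_{\overline{\mathbb{E}}}(\mathbf{r})=1$, $\operatorname{ord}_{\overline{\mathbb{E}}}(\mathbf{s})=2$, $\operatorname{ord}_{\overline{\mathbb{E}}}(\mathbf{t})=3$ for the exceptional divisor $\overline{\mathbb{E}}$ of $\mathbf{M}\to\mathbb{P}(1,3,4,6)$. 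Since the leading forms are algebraically independent, $\overline{\mathbb{E}}$ computes the monomial valuation of weights $(1,2,3)$ in the coordinates $(\mathbf{r},\mathbf{s},\mathbf{t})$, so the weighted blow-up of Theorem~\ref{theorem:main} is exactly the one in the statement. I expect this valuation and ramification bookkeeping to be the main obstacle, since one must simultaneously control the degrees of the leading forms and account for the factor $2$ reconciling $(2,4,6)$ with $(1,2,3)$.

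Finally, for the toric description I would move the centre to a torus-fixed point. The graded ring automorphism $H\mapsto H$, $R\mapsto 4R-H^3$, $S\mapsto S$, $T\mapsto T$ is an automorphism of $\mathbb{P}(1,3,4,6)$ carrying $(2:2:0:0)$ to the coordinate point $(1:0:0:0)$, and under it $\mathbf{r},\mathbf{s},\mathbf{t}$ become the standard affine toric coordinates on the chart $\{H\neq 0\}$. Realising $\mathbb{P}(1,3,4,6)$ as the toric variety with rays $\mathbf{v}_1=(1,0,0)$, $\mathbf{v}_2=(0,1,0)$, $\mathbf{v}_3=(0,0,1)$, $\mathbf{v}_4=(-3,-4,-6)$, so that the relation $3\mathbf{v}_1+4\mathbf{v}_2+6\mathbf{v}_3+\mathbf{v}_4=0$ encodes the weights, the chosen torus-fixed point corresponds to the smooth maximal cone $\operatorname{Cone}\{\mathbf{v}_1,\mathbf{v}_2,\mathbf{v}_3\}$. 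The weighted blow-up with weights $(1,2,3)$ on $(\mathbf{r},\mathbf{s},\mathbf{t})$ is then the star subdivision of this cone by the primitive vector $\mathbf{v}_0=1\cdot\mathbf{v}_1+2\cdot\mathbf{v}_2+3\cdot\mathbf{v}_3=(1,2,3)$, which replaces $\operatorname{Cone}\{\mathbf{v}_1,\mathbf{v}_2,\mathbf{v}_3\}$ by $\operatorname{Cone}\{\mathbf{v}_0,\mathbf{v}_1,\mathbf{v}_2\}$, $\operatorname{Cone}\{\mathbf{v}_0,\mathbf{v}_1,\mathbf{v}_3\}$, $\operatorname{Cone}\{\mathbf{v}_0,\mathbf{v}_2,\mathbf{v}_3\}$ and leaves the three cones through $\mathbf{v}_4$ untouched. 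This reproduces the fan $\Sigma$ verbatim and gives $\mathbf{M}\cong X_\Sigma$, completing the proof.
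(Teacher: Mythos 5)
Your proposal is correct in substance, but it reaches the conclusion by a different mechanism than the paper. The paper's proof is ideal-theoretic: it introduces the monomial ideal $\mathcal{I}=\left(\mathbf{r}^6,\mathbf{r}^4\mathbf{s},\mathbf{r}^3\mathbf{t},\mathbf{r}^2\mathbf{s}^2,\mathbf{r}\mathbf{s}\mathbf{t},\mathbf{s}^3,\mathbf{t}^2\right)$ of weighted degree $6$ for the weights $(1,2,3)$, so that $X_\Sigma$ is by construction the blow-up of $\mathbb{P}(1_H,3_R,4_S,6_T)$ along $\mathcal{I}$; it then uses Proposition~\ref{proposition:explicit} to show that $\eta^*(\mathcal{I})\cdot\mathcal{O}_{\mathbb{B}}=\mathcal{O}_{\mathbb{B}}(-12\mathbb{E})$ is invertible, and concludes $\mathbf{M}\simeq X_\Sigma$ because the universal property of blow-ups makes $\eta$ factor through $X_\Sigma$, and the induced finite birational morphism $\mathbf{M}\to X_\Sigma$ onto a normal variety is an isomorphism. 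You instead identify the exceptional divisorial valuation: from Proposition~\ref{proposition:explicit} you get $\operatorname{ord}_{\mathbb{E}}(\mathbf{r})=2$, $\operatorname{ord}_{\mathbb{E}}(\mathbf{s})=4$, $\operatorname{ord}_{\mathbb{E}}(\mathbf{t})=6$ with algebraically independent leading forms, you divide by the ramification index $2$ coming from the kernel of the $\Theta$-action of Remark~\ref{remark:1-2-3}, and you conclude that the exceptional divisor of $\mathbf{M}\to\mathbb{P}(1,3,4,6)$ computes the $(1,2,3)$-monomial valuation in $(\mathbf{r},\mathbf{s},\mathbf{t})$; the toric statement then follows by an explicit star subdivision, which the paper leaves implicit. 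Both arguments rest on exactly the same leading-order computation (your degrees $(2,4,6)$ are precisely why the paper's pullback is $-12\mathbb{E}$); yours makes the ramification bookkeeping and the toric combinatorics explicit, while the paper's factorization argument is shorter and self-contained.

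The one point you should repair is a mild circularity: you invoke the clause of Theorem~\ref{theorem:main} asserting that $\mathbf{M}\to\mathbb{P}(1,3,4,6)$ is a weighted blow-up of a smooth point with weights $(1,2,3)$, but the proof of Theorem~\ref{theorem:main} in the paper only establishes the diagram and the isomorphism $\mathbf{M}\simeq\mathbb{B}\slash\big(W(\mathrm{F}_4)/\langle -1\rangle\big)$; the weighted-blow-up clause is exactly what Proposition~\ref{proposition:toric} is meant to substantiate, so it should not be used as an input. Fortunately your computation already contains the fix: since the three leading forms are algebraically independent (indeed have no common zero on $\mathbb{E}$), the pullback to $\mathbb{B}$ of the weight-$\geqslant 6$ monomial ideal in $(\mathbf{r},\mathbf{s},\mathbf{t})$ is invertible, so the universal property of blow-ups gives a factorization of $\eta$ through the weighted blow-up, and the induced finite birational map from $\mathbf{M}$ to it is an isomorphism by normality; alternatively, one can note that $-\overline{\mathbb{E}}$ is relatively ample and realize $\mathbf{M}$ as $\operatorname{Proj}$ of the algebra of valuation ideals $\mathcal{I}_m=\{f\ :\ \operatorname{ord}_{\overline{\mathbb{E}}}(f)\geqslant m\}$, which your identification of $\operatorname{ord}_{\overline{\mathbb{E}}}$ shows is the weighted-blow-up algebra. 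Either way the appeal to the unproved clause of Theorem~\ref{theorem:main} is eliminated and your argument becomes complete.
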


\begin{proof}
Let $\mathbb{E}$ be the~exceptional divisor of the~blow-up $\mathbb{B}\to \mathbb{P}^3_{a,b,c,d}$,
and let
$\mathcal{I}$ be the~ideal sheaf on $\mathbb{P}(1_{H},3_{R},4_{S},6_{T})$ with co-support $\{(2:2:0:0)\}$ defined locally by
$$
\mathcal{I}=\left(\mathbf{r}^6, \mathbf{r}^4\mathbf{s},
\mathbf{r}^3\mathbf{t},
\mathbf{r}^2\mathbf{s}^2,
\mathbf{r}\mathbf{s}\mathbf{t},
\mathbf{s}^3, \mathbf{t}^2\right).
$$
Then $X_\Sigma$ is the~blow-up of $\mathbb{P}(1_{H},3_{R},4_{S},6_{T})$ along $\mathcal{I}$.
It follows from Proposition \ref{proposition:explicit} that
$$
\eta^*\big(\mathcal{I}\big)\cdot\mathcal{O}_{\mathbb{B}}=\mathcal{O}_{\mathbb{B}}\big(-12\mathbb{E}\big),
$$
where $\eta$ is the~composition $\mathbb{B}\to \mathbb{P}^3_{a,b,c,d}\to\mathbb{P}(1_{H},3_{R},4_{S},6_{T})$.
This gives $\mathbf{M}\simeq X_\Sigma$, because the~morphism $\eta$ naturally factors through $X_\Sigma$.
\end{proof}

\section{K-semistable limits}
\label{section:stack}

In Section~\ref{section:K-moduli}, we have described all K-polystable Fano 3-folds that are limits of smooth divisors in $(\mathbb{P}^1)^4$ of degree $(1,1,1,1)$.
Now, we prove the~following result:

\begin{theorem}
\label{theorem:Kss}
Let $X$ be a~K-polystable Fano 3-fold admiting a~$\mathbb{Q}$-Gorenstein smoothing~to a~divisor in $(\mathbb{P}^1)^4$ of degree $(1,1,1,1)$.
Then $X$ is isomorphic to an~irreducible complete intersection of three divisors in $\mathbb{P}^3\times\mathbb{P}^3$ of degree $(2,0)$,
$(0,2)$, $(1,1)$.
\end{theorem}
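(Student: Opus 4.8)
The plan is to deduce the statement from the classification already obtained rather than to analyse the singularities of $X$ directly. First I would reduce to the situation of the Main Theorem by a standard separatedness argument for K-moduli. A $\mathbb{Q}$-Gorenstein smoothing of $X$ to a smooth divisor of degree $(1,1,1,1)$ in $(\mathbb{P}^1)^4$ is a family $\mathcal{X}\to\Delta$ over a disk with central fibre $\mathcal{X}_0\cong X$ whose general fibre is a smooth member of family~\textnumero~4.1; by \cite{BelousovLoginov} these general fibres are K-stable, so the induced morphism from the punctured disk $\Delta^\ast$ to $\mathrm{M}^\mathrm{Kps}_{3,24}$ factors through the component $\mathbf{M}$ of Theorem~\ref{theorem:main}. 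Since $\mathbf{M}$ is a connected component of the proper moduli space $\mathrm{M}^\mathrm{Kps}_{3,24}$ \cite{LiuXuZhuang}, it is itself proper, so this morphism extends over the origin with image a point of $\mathbf{M}$. By separatedness of K-moduli this limit point is represented by a unique K-polystable 3-fold, and as $X$ is itself K-polystable, $X$ is isomorphic to the 3-fold parametrised by that point. Hence $X$ is one of the Fano 3-folds listed in the Main Theorem.

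Next I would exhibit the required complete intersection in each of the two cases produced by the Main Theorem, using the embeddings already constructed in Section~\ref{section:2-2-divisors}. If $X$ is a divisor in $(\mathbb{P}^1)^4$ of the form \eqref{equation:form-1}, then the map $(\mathbb{P}^1)^4\to\mathbb{P}^3\times\mathbb{P}^3$ of Section~\ref{section:2-2-divisors} identifies $X$ with the complete intersection cut out by $u_1^2+u_2u_3-u_4^2$, by $v_1^2+v_2v_3-v_4^2$, and by the bidegree $(1,1)$ equation recorded there, which have degrees $(2,0)$, $(0,2)$ and $(1,1)$. If instead $X$ is a divisor of the form \eqref{equation:form-2} in $\big(\mathbb{P}(1,1,2)\big)^2$, then by the computation of Section~\ref{section:2-2-divisors} it is isomorphic to the 3-fold \eqref{equation:limit}, which is manifestly the complete intersection of $u_2u_3-u_4^2$, of $v_2v_3-v_4^2$, and of the same $(1,1)$-form, again of degrees $(2,0)$, $(0,2)$, $(1,1)$; the only difference is that the two quadrics now have rank $3$.

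Finally I would record irreducibility. The points of $\mathbf{M}$ listed in the Main Theorem exclude exactly the locus $\{(1:0:0:0),\dots,(1:\pm1:\pm1:\pm1)\}$, which by Lemma~\ref{lemma:reducible} is the reducible locus of the forms \eqref{equation:form-1}; hence every $X$ of the first type is irreducible. The 3-folds \eqref{equation:form-2} are irreducible Fano 3-folds by the analysis of Section~\ref{section:2-2-divisors}. In either case $X$ is therefore an irreducible complete intersection of divisors of degrees $(2,0)$, $(0,2)$, $(1,1)$ in $\mathbb{P}^3\times\mathbb{P}^3$, which is the assertion.

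I expect the only genuinely delicate point to be the first paragraph: one must verify that the smoothing hypothesis indeed produces a morphism into $\mathbf{M}$, and that separatedness of K-moduli forces $X$ to be \emph{isomorphic} to the K-polystable representative of the limit rather than merely S-equivalent to it. Once $X$ is known to be one of the 3-folds of the Main Theorem, the explicit realisations in the remaining steps are immediate from Section~\ref{section:2-2-divisors}.
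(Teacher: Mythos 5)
Your argument is sound for the statement as literally worded, but it proves less than what the paper's proof establishes, and the difference is exactly the point you flagged as delicate. Note that Section~\ref{section:stack} is titled ``K-semistable limits'', and the paper's proof begins by taking the K-polystable degeneration $\pi\colon\mathcal{X}\to\mathbb{A}^1$ of $X$, with $X_t\simeq X$ for $t\ne 0$ and with $X_0$ the polystable limit lying in $\mathbf{M}$: so the hypothesis ``K-polystable'' in the statement is a slip for ``K-semistable''. Under the literal polystable reading your reduction does work: a $\mathbb{Q}$-Gorenstein family with K-stable general fibres \cite{BelousovLoginov} and K-polystable central fibre maps to the K-moduli stack, the induced map to $\mathrm{M}^{\mathrm{Kps}}_{3,24}$ sends $0$ to the class of $X$, uniqueness of polystable representatives identifies $X$ with a point of $\mathbf{M}$, and both normal forms \eqref{equation:form-1} and \eqref{equation:form-2} are visibly irreducible complete intersections of degrees $(2,0)$, $(0,2)$, $(1,1)$ via the embeddings of Section~\ref{section:2-2-divisors} (irreducibility in the first case because Main Theorem excludes precisely the reducible locus of Lemma~\ref{lemma:reducible}). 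But this reading makes Theorem~\ref{theorem:Kss} an almost immediate corollary of Main Theorem, which should have been a warning sign.

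For the intended K-semistable statement your approach fails at the step you yourself isolated: the limit point in $\mathbf{M}$ determines $X$ only up to S-equivalence, so your argument shows that the polystable degeneration $X_0$ of $X$ belongs to the list of Main Theorem, but it gives no information about $X$ itself when $X\not\simeq X_0$. Supplying that information is the entire content of the paper's proof, which is why it is long: it transfers the complete-intersection structure from $X_0$ to $X$ across the degeneration $\mathcal{X}\to\mathbb{A}^1$. Concretely, the paper shows that $\operatorname{Pic}(\mathcal{X})\to\operatorname{Pic}(X_0)$ is an isomorphism (arguing as in \cite[Lemma~2.17]{Zhuang2020}), extends the two half-anticanonical classes $H_1,H_2$ on $X_0$ to Cartier divisors $H_i^{\mathcal{X}}$, uses Kawamata--Viehweg vanishing and Riemann--Roch to show that $\pi_*(H_i^{\mathcal{X}})$ is locally free of rank $4$, checks that the fibrewise images $Q_{i,t}\subset\mathbb{P}^3$ are normal quadric surfaces (degree $4$ being excluded because the Stein factorisation of $\phi_{i,t}$ is a conic bundle, forcing $-K_{X_t}\cdot\ell$ to be even), proves that $\phi_1\times\phi_2$ is a closed embedding near the central fibre, and finally uses local freeness along $X_0$ of the reflexive sheaf $\mathcal{O}_{Q_1^{\mathcal{X}}\times_{\mathbb{A}^1}Q_2^{\mathcal{X}}}(\phi(\mathcal{X}))$ to conclude that $X_t\in\left|\mathcal{O}_{Q_{1,t}\times Q_{2,t}}(1,1)\right|$ for every $t$, in particular for $X\simeq X_t$ with $t\ne 0$. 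None of this can be recovered from properness and separatedness of the moduli space alone.
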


\begin{proof}
Let $\pi\colon\mathcal{X}\to\mathbb{A}^1_t$ be the~K-polystable degeneration of $X$,
and let $X_t$ be the~fiber of the~morphism $\pi$ over the~point $t\in\mathbb{A}^1_t$. Then
$$
X_t\simeq X
$$
for any $t\neq 0$, and $X_0$ is one of K-polystable Fano 3-folds described in Main Theorem.
Since $X_0$ has canonical Gorenstein singularities, so is $\mathcal{X}$.
Therefore, the~3-fold $X_t$ also has canonical Gorenstein singularities for every $t\in\mathbb{A}^1$.

On the~other hand, we know from Main Theorem that $\operatorname{Pic}(X_0)$ contains two Cartier divisors $H_1$ and $H_2$ such that $-K_X\sim H_1+H_2$,
$H_1^3=H_2^3=0$, $H_1^2\cdot H_2=H_1\cdot H_2^2=4$,
both linear system $|H_1|$ and $|H_2|$ are base point free, and give two morphisms $X_0\to \mathbb{P}^3$.
Moreover, arguing as in the~proof of \cite[Lemma 2.17]{Zhuang2020},
we conclude that the~restriction homomorphism $\operatorname{Pic}(\mathcal{X})\to\operatorname{Pic}(X_0)$ is an~isomorphism.
Let $H_1^\mathcal{X}$ and $H_2^\mathcal{X}$ be Cartier divisors on $\mathcal{X}$ such that $H_1^\mathcal{X}\vert_{X_0}=H_1$
and $H_2^\mathcal{X}\vert_{X_0}=H_2$, let $H_{1,t}=H_1^\mathcal{X}\vert_{X_t}$ and $H_{2,t}=H_2^\mathcal{X}\vert_{X_t}$. Then
$$
h^i\big(X_t, H_{1,t}\big)=h^i\big(X_t, H_{2,t}\big)=0
$$
for every $i\geqslant 1$ by the~Kawamata--Viehweg vanishing theorem. Then
$$
h^0\big(X_t, H_{1,t}\big)=h^0\big(X_t, H_{2,t}\big)=4
$$
by the~Riemann--Roch theorem. This implies that $\pi_*(H_i^{\mathcal{X}})$ is locally free sheaf of rank $4$.
Again by the~Kawamata--Viehweg vanishing theorem, the~restriction
$$
\pi_*(H_i^{\mathcal{X}})\to\left(\pi_*(H_i^{\mathcal{X}})\right)\big|_{X_t}
$$
is surjective for any $t\in\mathbb{A}^1$,
so the~evaluation homomorphism $\pi^*\pi_*(H_i^{\mathcal{X}}) \to H_i^{\mathcal{X}}$ is surjective.
Thus, for $i\in\{1,2\}$, the~divisor $H_i^{\mathcal{X}}$ gives a~morphism
$$
\phi_i\colon\mathcal{X}\to \mathbb{P}_{\mathbb{A}^1}\big(\pi_*(H_i^{\mathcal{X}})\big)\cong\mathbb{P}^3\times\mathbb{A}^1,
$$
and its restriction $\phi_{i,t}\colon X_t\to\mathbb{P}^3$ is given by the~complete linear series $|H_{i,t}|$.

For each $i\in\{1,2\}$ and every $t\in\mathbb{A}^1$, let $Q_{i,t}\subset\mathbb{P}^3$ be the~image of the~morphism $\phi_{i,t}$.
Since $(H_{1,t}^3)=(H_{2,t}^3)=0$ and $-K_{X_t}\cdot H_{1,t}^2=-K_{X_t}\cdot H_{2,t}^2=4$,
we see that $Q_{i,t}$ is a~surface, and the~degree of $Q_{i,t}$ is either $2$ or $4$.
If the~degree is $4$, then
$$
-K_{X_t}\cdot\ell=1
$$
for a~general fiber $\ell$ of $\phi_{i,t}\colon X_t\twoheadrightarrow Q_{i,t}$.
But the~Stein factorization of $\phi_{i,t}$ gives a~conic bundle, so $-K_{X_t}\cdot\ell$ is divisible by $2$.
Thus, we see that $Q_{i,t}$ is a~normal quadric surface.

Let $Q_i^{\mathcal{X}}\subset\mathbb{P}^3\times\mathbb{A}^1$ be the~scheme-theoretic image of $\phi_i$.
Since $Q_i^{\mathcal{X}}$ is an~irreducible and reduced Cartier divisor on $\mathbb{P}^3\times\mathbb{A}^1$,
it is Cohen--Macaulay.
Therefore, the~scheme-theoretic fiber of $Q_i^{\mathcal{X}}\to\mathbb{A}^1$ over $t\in\mathbb{A}^1$ is the~surface $Q_{i,t}$,
because this is true for a~general $t\in\mathbb{A}^1$.

Let us consider the~product morphism
$$
\phi=\phi_1\times\phi_2\colon\mathcal{X}\to\mathbb{P}^3\times\mathbb{P}^3\times\mathbb{A}^1
$$
over $\mathbb{A}^1$. Then $\phi$ factors through $\phi\colon\mathcal{X}\to Q_1^{\mathcal{X}}\times_{\mathbb{A}^1} Q_2^{\mathcal{X}}$, and $\phi$ is finite, since
$$
H_1^{\mathcal{X}}+H_2^{\mathcal{X}}\sim_{\mathbb{A}^1}-K_{\mathcal{X}/\mathbb{A}^1}.
$$
Moreover, the~restriction of $\phi$ at $0\in\mathbb{A}^1$ is a~closed embedding,
so $\phi$ is a~closed embedding near $X_0$ (cf.\ \cite[\S 5.1, Exercise 1.25]{Liu2002}).
Then $\phi\colon \mathcal{X}\hookrightarrow Q_1^{\mathcal{X}}\times_{\mathbb{A}^1}Q_2^{\mathcal{X}}$ is a~closed embedding.

Let us consider the~reflexive sheaf
$\mathcal{O}_{Q_1^{\mathcal{X}}\times_{\mathbb{A}^1}Q_2^{\mathcal{X}}}\left(\phi(\mathcal{X})\right)$ on $Q_1^{\mathcal{X}}\times_{\mathbb{A}^1}Q_2^{\mathcal{X}}$.
Note that
$$
\mathcal{O}_{Q_1^{\mathcal{X}}\times_{\mathbb{A}^1}Q_2^{\mathcal{X}}}\left(\phi(\mathcal{X})\right)\big|_{Q_{1,0}\times Q_{2,0}}\simeq\mathcal{O}_{Q_{1,0}\times Q_{2,0}}(X_0)
$$
by \cite[Lemma 12.1.8]{KollarMori}.
Since $\mathcal{O}_{Q_{1,0}\times Q_{2,0}}(X_0)$ is locally free, so is
$\mathcal{O}_{Q_1^{\mathcal{X}}\times_{\mathbb{A}^1}Q_2^{\mathcal{X}}}
\left(\phi(\mathcal{X})\right)$ around the~fiber over $0\in\mathbb{A}^1$.
Therefore, for any $t\in\mathbb{A}^1$, the~image of the~embedding
$$
X_t\hookrightarrow Q_{1,t}\times Q_{2,t}
$$
is a~Cartier divisor. Thus, since $-K_{X_t}\sim H_{1,t}+H_{2,t}$,
we have $X_t\in\left|\mathcal{O}_{Q_{1,t}\times Q_{2,t}}(1,1)\right|$.
\end{proof}

Now, we set
$$
\mathbb{H}=
\mathbb{P}\Big(H^0\left(\mathbb{P}^3\times\mathbb{P}^3,\mathcal{O}(2,0)\right)\Big)\times
\mathbb{P}\Big(H^0\left(\mathbb{P}^3\times\mathbb{P}^3,\mathcal{O}(0,2)\right)\Big)\times
\mathbb{P}\Big(H^0\left(\mathbb{P}^3\times\mathbb{P}^3,\mathcal{O}(1,1)\right)\Big),
$$
Then $\mathbb{H}\simeq\mathbb{P}^{9}\times\mathbb{P}^{9}\times\mathbb{P}^{15}$ is the~space parametrising complete intersections in $\mathbb{P}^3\times\mathbb{P}^3$ of three divisors of degree $(2,0)$, $(0,2)$, $(1,1)$.
Let $\mathbb{H}^{\operatorname{{Kss}}}\subset\mathbb{H}$ be the~locus whose closed points parametrise K-semistable complete intersections.
Then
$$
\mathrm{codim}\big(\mathbb{H}\setminus\mathbb{H}^{\operatorname{{Kss}}}\big)\geqslant 2.
$$
Let $\lambda_{\operatorname{CM}}$ be the~CM line bundle on $\mathbb{H}^{\operatorname{Kss}}$ of the~corresponding universal family of complete intersections,
and let ${\mathcal{X}}^{\mathbb{H}}\to \mathbb{H}$ be the~universal family of complete intersections.
Then
\begin{align*}
\left((-K_{\mathcal{X}^{\mathbb{H}}/\mathbb{H}})^4\cdot
\mathcal{O}_{\mathbb{H}}(1,0,0)|_{\mathcal{X}^{\mathbb{H}}}^{8}
\cdot\mathcal{O}_{\mathbb{H}}(0,1,0)|_{\mathcal{X}^{\mathbb{H}}}^{9}
\cdot\mathcal{O}_{\mathbb{H}}(0,0,1)|_{\mathcal{X}^{\mathbb{H}}}^{15}\right)&=-76,\\
\left((-K_{\mathcal{X}^{\mathbb{H}}/\mathbb{H}})^4\cdot
\mathcal{O}_{\mathbb{H}}(1,0,0)|_{\mathcal{X}^{\mathbb{H}}}^{9}
\cdot\mathcal{O}_{\mathbb{H}}(0,1,0)|_{\mathcal{X}^{\mathbb{H}}}^{8}
\cdot\mathcal{O}_{\mathbb{H}}(0,0,1)|_{\mathcal{X}^{\mathbb{H}}}^{15}\right)&=-76,\\
\left((-K_{\mathcal{X}^{\mathbb{H}}/\mathbb{H}})^4\cdot
\mathcal{O}_{\mathbb{H}}(1,0,0)|_{\mathcal{X}^{\mathbb{H}}}^{9}
\cdot\mathcal{O}_{\mathbb{H}}(0,1,0)|_{\mathcal{X}^{\mathbb{H}}}^{9}
\cdot\mathcal{O}_{\mathbb{H}}(0,0,1)|_{\mathcal{X}^{\mathbb{H}}}^{14}\right)&=-72.
\end{align*}
This implies that $\lambda_{\operatorname{CM}}$ is the~restriction of the~line bundle $\mathcal{O}_{\mathbb{H}}(76,76,72)$.

\appendix

\section{Four lines in $\mathbb{P}^3$}
\label{section:4-lines}

Recall from Section~\ref{section:geometry} that $\mathbb{P}^{\operatorname{Sing}}$ is the~divisor in $\mathbb{P}^3_{a,b,c,d}$
whose closed points parametrise singular divisors in $\mathbb{P}^1_{x_1,y_1}\times\mathbb{P}^1_{x_2,y_2}\times\mathbb{P}^1_{x_3,y_3}\times\mathbb{P}^1_{x_4,y_4}$  that are given by
\begin{multline}
\label{equation:1-1-1-1}
\frac{a+d}{2}\big(x_1x_2x_3x_4+y_1y_2y_3y_4\big)
+\frac{a-d}{2}\big(x_1x_2y_3y_4+y_1y_2x_3x_4\big)+\\
+\frac{b+c}{2}\big(x_1y_2x_3y_4+y_1x_2y_3x_4\big)
+\frac{b-c}{2}\big(x_1y_2y_3x_4+y_1x_2x_3y_4\big)=0.
\end{multline}
Recall from Section~\ref{section:2-2-divisors} that $\mathbb{B}$ is the~blow up of $\mathbb{P}^3_{a,b,c,d}$ along $12$ points parametrising reducible divisors,
and $\mathbb{E}=\mathbb{P}^2_{a,b,c}$ is one of the~exceptional divisors of this blow up whose closed points parametrise divisors in $\mathbb{P}(1_{s_1},1_{t_1},2_{w_1})\times\mathbb{P}(1_{s_2},1_{t_2},2_{w_2})$ that are given by
\begin{equation}
\label{equation:2-2}
w_1w_2=as_1t_1s_2t_2 +\frac{b+c}{4}\big(s_1^2s_2^2+t_1^2t_2^2\big)+\frac{b-c}{4}\big(s_1^2t^2_2+t^2_1s^2_2\big).
\end{equation}

Let $\widetilde{\mathbb{P}^{\operatorname{Sing}}}$ be the~strict transform on $\mathbb{B}$ of the~divisor $\mathbb{P}^{\operatorname{Sing}}$.
If $X$ is a singular~Fano~\mbox{3-fold} which is parametrised by a point of $\widetilde{\mathbb{P}^{\operatorname{Sing}}}$, it can be described in a~simple geometric~way.
For instance, if $X$ is a~divisor \eqref{equation:1-1-1-1} with two singular points,
then we have the~following commutative diagram:
\begin{equation}
\label{equation:4-lines}
\xymatrix{
&Y\ar@{->}[dl]_{\pi}\ar@{->}[dr]^{\eta}\\%
\mathbb{P}^3\ar@{-->}[rr]_{\chi} && X}
\end{equation}
where $\pi$ is a~blow up of four disjoint lines $L_1$, $L_2$, $L_3$, $L_4$ that are not contained in a~quadric surface,
$\chi$ is induced by the~composition of linear projections $\mathbb{P}^3\dasharrow\mathbb{P}^1$ from these lines,
and $\eta$ is a~small contraction of the~strict transforms of the~two lines in $\mathbb{P}^3$ that intersect the~lines $L_1$, $L_2$, $L_3$, $L_4$.
If $X$ is a~divisor \eqref{equation:1-1-1-1} singular along a~curve,
we still has \eqref{equation:4-lines}, but in this case the~lines $L_1$, $L_2$, $L_3$, $L_4$ are contained in a~quadric surface,
and $\eta$ contracts the~strict transform of this surface.

\subsection{Disjoint lines}
\label{subsection:4-lines-disjoint}
In this section, we will show the~existence of the~diagram \eqref{equation:4-lines}
in the~case when $X$ is a~divisor \eqref{equation:1-1-1-1}
whose singular locus is either two points or a~curve.

Let $G\simeq(\mathbb{Z}/2\mathbb{Z})^3$ be the~subgroup in $\mathrm{Aut}(\mathbb{P}^3_{z_0,z_1,z_2,z_3})$ generated by the~involutions:
\begin{align*}
\tau_1\colon (z_0:z_1:z_2:z_3)&\mapsto(z_1:z_0:z_3:z_2);\\
\tau_2\colon (z_0:z_1:z_2:z_3)&\mapsto(z_0:-z_1:z_2:-z_3);\\
\tau_3\colon (z_0:z_1:z_2:z_3)&\mapsto(z_2:z_3:z_0:z_1).
\end{align*}
Fix $(c_0:c_1:c_2:c_3)\in\mathbb{P}^3$ such that $c_0^2\ne c_2^2$,
and let $L_1$ be the~line in $\mathbb{P}^3$ that passes through the~points $(c_0:c_1:c_2:c_3)$ and $(c_2:c_3:c_0:c_1)=\tau_3(c_0:c_1:c_2:c_3)$.
Note that
$$
(c_2:c_3:c_0:c_1)\ne (c_0:c_1:c_2:c_3).
$$
Similarly, we define the~lines $L_2$, $L_3$, $L_4$ as follows: $L_2=\tau_1(L_1)$, $L_3=\tau_2(L_2)$, $L_4=\tau_2(L_1)$.
Then $L_1=\{f_1=g_1=0\}$, $L_2=\{f_2=g_2=0\}$, $L_3=\{f_3=g_3=0\}$, $L_4=\{f_4=g_4=0\}$, where
\begin{align*}
f_1&=(c_2c_3-c_0c_1)z_0+(c_0^2-c_2^2)z_1+(c_1c_2-c_0c_3)z_2,\\
g_1&=(c_1c_2-c_0c_3)z_0+(c_2c_3-c_0c_1)z_2+(c_0^2-c_2^2)z_3,\\
f_2&=(c_0^2-c_2^2)z_0+(c_2c_3-c_0c_1)z_1+(c_1c_2-c_0c_3)z_3,\\
g_2&=(c_1c_2-c_0c_3)z_1+(c_0^2-c_2^2)z_2+(c_2c_3-c_0c_1)z_3,\\
f_3&=(c_0^2-c_2^2)z_0+(c_0c_1-c_2c_3)z_1+(c_0c_3-c_1c_2)z_3,\\
g_3&=(c_0c_3-c_1c_2)z_1+(c_0^2-c_2^2)z_2+(c_0c_1-c_2c_3)z_3,\\
f_4&=(c_2c_3-c_0c_1)z_0-(c_0^2-c_2^2)z_1+(c_1c_2-c_0c_3)z_2,\\
g_4&=(c_1c_2-c_0c_3)z_0+(c_2c_3-c_0c_1)z_2-(c_0^2-c_2^2)z_3.
\end{align*}
Set
\begin{align*}
D_{12}&=\big((c_0+c_1)^2-(c_2+c_3)^2\big)\big((c_0-c_1)^2-(c_2-c_3)^2\big), \\
D_{13}&=\big((c_0-c_2)^2+(c_1-c_3)^2\big)\big((c_0+c_2)^2+(c_1+c_3)^2\big),\\
D_{14}&=\big(c_0^2-c_2^2\big)\big(c_1^2-c_3^2\big).
\end{align*}
Then the~lines $L_1$, $L_2$, $L_3$, $L_4$ are disjoint if and only if
\begin{equation}
\label{2}
D_{12}D_{13}D_{14}\neq 0.
\end{equation}
Similarly, the~lines $L_1$, $L_2$, $L_3$, $L_4$ are
distinct but each of them intersects with exactly
two of the~others if and only if
\begin{equation}\label{6}
\begin{split}
D_{12}=0, D_{13}=0, & D_{14}\neq 0, \text{ or }\\
D_{12}=0, D_{14}=0, & D_{13}\neq 0, \text{ or}\\
D_{13}=0, D_{14}=0, & D_{12}\neq 0.
\end{split}
\end{equation}
From now on, we assume that the~lines $L_1$, $L_2$, $L_3$, $L_4$ are disjoint.

Let $\chi\colon\mathbb{P}^3\dasharrow(\mathbb{P}^1)^4$ be the~map given by
\begin{equation}
\label{chi}
(z_0:z_1:z_2:z_3)\mapsto\big((f_1:g_1),(f_2:g_2),(f_3:g_3),(f_4:g_4)\big),
\end{equation}
and let $X$ be the~closure of its image in $(\mathbb{P}^1)^4$.
Then $\chi$ is $G$-equivariant,
and $X$ is a~divisor in $(\mathbb{P}^1)^4$ of degree $(1,1,1,1)$ which is given by the~following equation:
\begin{multline*}
\quad \quad \quad \quad 4(c_0c_3-c_1c_2)^2(x_1x_2x_3x_4+y_1y_2y_3y_4)-\\
-(c_0^2-c_1^2-c_2^2+c_3^2)^2(x_1x_2y_3y_4+x_3x_4y_1y_2)+\quad\quad  \\
\quad \quad +(c_0^2+c_1^2-c_2^2-c_3^2)^2(x_1x_3y_2y_4+x_2x_4y_1y_3)-\\
-4(c_0c_1-c_2c_3)^2(x_1x_4y_2y_3+x_2x_3y_1y_4)=0.\quad \quad \quad \quad
\end{multline*}
This is equation \eqref{equation:1-1-1-1} with
\begin{equation}
\label{abcd}
\begin{cases}
a=\big((c_0+c_1)^2-(c_2+c_3)^2\big)\big((c_2-c_3)^2-(c_0-c_1)^2\big), \\
b=\big((c_0+c_1)^2-(c_2+c_3)^2\big)\big((c_0-c_1)^2-(c_2-c_3)^2\big), \\
c=(c_0^2+c_1^2-c_2^2-c_3^2)^2+4(c_0c_1-c_2c_3)^2, \\
d=4(c_0c_3-c_1c_2)^2+(c_0^2-c_1^2-c_2^2+c_3^2),
\end{cases}
\end{equation}
so that we have $a+b=0$.
Moreover, we have $G$-equivariant commutative diagram~\eqref{equation:4-lines},
where $\eta$ is the~birational morphism contracting strict transforms of all lines in $\mathbb{P}^3$
that intersect the~lines $L_1$, $L_2$, $L_3$, $L_4$.
In particular, if $L_1$, $L_2$, $L_3$, $L_4$ are contained in a~quadric, then $\eta$ contracts a~divisor, and $X$ is singular along a~curve.
This does not happen if
\begin{equation}
\label{3}
\big(c_0^2+c_1^2-c_2^2-c_3^2\big)\big(c_0^2-c_1^2-c_2^2+c_3^2\big)\big(c_0c_1-c_2c_3\big)\big(c_0c_3-c_1c_2\big)\ne 0.
\end{equation}
Moreover, if $L_1$, $L_2$, $L_3$, $L_4$ are not contained in a~quadric, then
there exist exactly two distinct lines in $\mathbb{P}^3$ that intersect all lines $L_1$, $L_2$, $L_3$, $L_4$.

We have constructed \eqref{equation:4-lines} such that $X$ is a divisor \eqref{equation:1-1-1-1} that either has two nodes or a curve of singularities.
However, it is not clear that we can obtain all divisors \eqref{equation:1-1-1-1} that have these properties. This follows from the~following lemmas.

\begin{lemma}
\label{lemma:surjective_separate}
Let $\rho\colon\mathbb{P}^3_{z_0,z_1,z_2,z_3}\dashrightarrow\mathbb{P}^3_{x,y,z,t}$ be the~rational map given by
$$
(z_0:z_1:z_2:z_3)\mapsto\big(2(z_0z_3-z_1z_2):(z_0^2-z_1^2-z_2^2+z_3^2):2(z_0z_1-z_2z_3):(z_0^2+z_1^2-z_2^2-z_3^2)\big),
$$
let $\sigma\colon\mathbb{P}^3_{x,y,z,t}\to\mathbb{P}^3_{a,b,c,d}$ be the~morphism given by
$$
(x:y:z:t)\mapsto(x^2-y^2:-z^2+t^2:z^2+t^2:x^2+y^2).
$$
Take any point $(a:b:c:d)\in \mathbb{P}^3\setminus(1:-1:1:1)$ such that $a+b=0$.
Then there exists a point $(c_0:c_1:c_2:c_3)\in\mathbb{P}^3$
outside of the~indeterminacy locus of $\sigma\circ\rho$ such that
$$
\sigma\circ\rho(c_0:c_1:c_2:c_3)=(a:b:c:d)
$$
and $c_0^2\neq c_2^2$.
\end{lemma}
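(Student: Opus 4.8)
The plan is to make the composite $\sigma\circ\rho$ completely explicit after a linear change of coordinates, and then to read off both the surjectivity and the condition $c_0^2\neq c_2^2$ from that explicit form. Writing the source coordinates as $z_0,z_1,z_2,z_3$, I would introduce
\[ p=z_0-z_2,\quad q=z_0+z_2,\quad r=z_1-z_3,\quad s=z_1+z_3, \]
and compute directly that
\[ \sigma\circ\rho=(a:b:c:d),\qquad a=(p^2-r^2)(s^2-q^2),\quad b=-a, \]
\[ c=(p^2+r^2)(q^2+s^2)+4pqrs,\qquad d=(p^2+r^2)(q^2+s^2)-4pqrs. \]
This agrees with \eqref{abcd} under the identification $z_i=c_i$ and in particular re-confirms $a+b=0$. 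Two facts are recorded at this stage: the common zero locus of $a,c,d$ on $\mathbb{P}^3$ is the pair of skew lines $\{p=r=0\}\cup\{q=s=0\}$, which (as $\sigma$ is itself a morphism) is exactly the indeterminacy locus of $\sigma\circ\rho$; and $c_0^2-c_2^2$ is a nonzero multiple of $pq$, so the required condition $c_0^2\neq c_2^2$ is precisely $pq\neq 0$.

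Since $a,c,d$ are bihomogeneous of bidegree $(2,2)$ in the pairs $(p:r)$ and $(q:s)$, the map $\sigma\circ\rho$ factors, away from those two lines, through the standard projection $\mathbb{P}^3\dashrightarrow\mathbb{P}^1\times\mathbb{P}^1$ followed by a morphism
\[ \Phi\colon \mathbb{P}^1\times\mathbb{P}^1\to\{a+b=0\}=\mathbb{P}^2_{a,c,d}. \]
A short base-point check shows that $a,c,d$ have no common zero on $\mathbb{P}^1\times\mathbb{P}^1$, so $\Phi$ is an honest morphism from a complete surface; computing its differential at one general point shows $\Phi$ is dominant, hence surjective with closed image equal to all of $\mathbb{P}^2$. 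Moreover, any lift of a point of $\mathbb{P}^1\times\mathbb{P}^1$ to $\mathbb{P}^3_z$ automatically avoids $\{p=r=0\}\cup\{q=s=0\}$, since a genuine point has $(p,r)\neq(0,0)$ and $(q,s)\neq(0,0)$, so such lifts lie outside the indeterminacy locus as required.

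The heart of the argument is then the bookkeeping of the condition $pq\neq 0$, organised by the identity $c-d=8pqrs$. If the target satisfies $c\neq d$, then \emph{every} preimage has $pqrs\neq 0$, in particular $pq\neq 0$; combined with surjectivity of $\Phi$ and the lifting remark above, this produces the desired point $(c_0:c_1:c_2:c_3)$. If instead $c=d$, I would instead produce a preimage by hand with $r=0$ and $p\neq 0$: the equations then reduce to $(a:c)=(s^2-q^2:s^2+q^2)$, which is solvable for $(q:s)$, and the solution satisfies $q\neq 0$ exactly when $a\neq c$. Because the target lies in $\{a+b=0\}$ with $c=d$, the equality $a=c$ is equivalent to $(a:b:c:d)=(1:-1:1:1)$ — the reducible divisor excluded in the statement — so for every admissible target with $c=d$ one obtains a preimage with $pq=q\neq 0$. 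Lifting back through the linear change of coordinates finishes the proof.

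The hard part is not the surjectivity but precisely this last bookkeeping: one must verify that the single target forced out by the requirement $c_0^2\neq c_2^2$ is exactly $(1:-1:1:1)$ and nothing more. The factorisation through the relation $c-d=8pqrs$ is what makes this transparent, whereas a naive attempt to invert $\sigma\circ\rho$ directly on $\mathbb{P}^3_z$ would obscure which locus of targets fails $pq\neq 0$.
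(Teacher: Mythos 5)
Your proof is correct and follows essentially the same route as the paper's: both factor $\sigma\circ\rho$ through the projection $(p:q:r:s)\mapsto\big((p:r),(q:s)\big)$ to $\mathbb{P}^1\times\mathbb{P}^1$ (the paper phrases this via the quadric $Q\simeq\mathbb{P}^1\times\mathbb{P}^1$ and the $\mathbb{P}^1$-bundle $\varphi$), and both reduce the condition $c_0^2\neq c_2^2$ to producing a preimage on the base with $pq\neq 0$, which is exactly the paper's condition $s_1t_2\neq 0$ in its coordinates. The only notable difference is that the paper simply asserts the existence of such a preimage when $(a:b:c:d)\neq(1:-1:1:1)$, whereas your identity $c-d=8pqrs$, together with the explicit $r=0$ construction in the case $c=d$, actually proves that assertion, so your write-up supplies the bookkeeping the paper leaves to the reader.
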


\begin{proof}
Observe that the~image of $\rho$ is the~quadric surface $Q=\{x^2-y^2-z^2+t^2=0\}$,
the~image $\sigma(Q)$ is the~plane $\{a+b=0\}$, and $\rho$ fits the~following commutative diagram:
$$
\xymatrix{
&W\ar@{->}[dl]_{\varpi}\ar@{->}[dr]^{\varphi}\\%
\mathbb{P}^3_{z_0,z_1,z_2,z_3}\ar@{-->}[rr] && \mathbb{P}^1_{s_1,t_1}\times\mathbb{P}^1_{s_2,t_2}\simeq Q}
$$
where $\varpi$ is the~blow up of the~lines $\{z_0-z_2=z_3-z_1=0\}$ and $\{z_0+z_2=z_1+z_3=0\}$,
and $\varphi$ is a~$\mathbb{P}^1$-bundle.
Moreover, the~map $\mathbb{P}^3_{z_0,z_1,z_2,z_3}\dashrightarrow \mathbb{P}^1_{s_i,t_i}$ is given by
$$
(z_0:z_1:z_2:z_3)\mapsto \big((z_0-z_2:z_3-z_1),(z_1+z_3:z_0+z_2)\big),
$$
and the~isomorphism $\mathbb{P}^1_{s_1,t_1}\times\mathbb{P}^1_{s_2,t_2}\simeq Q$ is given by
$$
\big((s_1:t_1),(s_2:t_2)\big)\mapsto\big(s_1s_2+t_1t_2: s_1t_2+s_2t_1: s_1s_2-t_1t_2: s_1t_2-s_2t_1\big).
$$
Since $(a:b:c:d)\neq(1:-1:1:1)$, there exists a point $((s_1:t_1),(s_2:t_2))$ with $s_1t_2\neq 0$ which is mapped by $\sigma$ to $(a:b:c:d)$ .
Since $s_1t_2\neq 0$,
the image of the~fiber of $\varphi$ over the~point $((s_1:t_1),(s_2:t_2))$ is not contained by any plane of the~form $\{z_0-\lambda z_2=0\}$,
which implies the~required assertion.
\end{proof}

\begin{corollary}
\label{corollary:4disjoint}
Let $(a:b:c:d)$ be a point in $\mathbb{P}^3$ such that $a+b=0$, and \eqref{equation:1-1-1-1} defines a~divisor $X\subset (\mathbb{P}^1)^4$
such that $X$ has either two singular points or a~curve of singularities.
Then there is $(c_0:c_1:c_2:c_3)\in\mathbb{P}^3$ satisfying \eqref{2} and \eqref{abcd}.
\end{corollary}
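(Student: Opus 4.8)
The plan is to combine Lemma~\ref{lemma:surjective_separate} with the singularity stratification of Corollary~\ref{corollary:singular}: the Lemma produces a point realizing \eqref{abcd}, and the stratification certifies that the associated four lines are disjoint, i.e.\ that \eqref{2} holds.

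First I would dispose of the value excluded by the Lemma. Since $X$ has either two isolated singular points or a one-dimensional singular locus, Corollary~\ref{corollary:singular} places $(a:c:d)$ in the stratum $\mathbb{P}^{2\mathbb{A}_1}_{a+b=0}$ or $\mathbb{P}^{\operatorname{Curv}}_{a+b=0}$. As the stratification is a disjoint union, this stratum is disjoint from $\mathbb{P}^{\operatorname{Red}}_{a+b=0}$; in particular $X$ is irreducible and $(a:b:c:d)\neq(1:-1:1:1)$. Lemma~\ref{lemma:surjective_separate} then supplies a point $(c_0:c_1:c_2:c_3)$ with $c_0^2\neq c_2^2$, off the indeterminacy locus of $\sigma\circ\rho$, with $\sigma\circ\rho(c_0:c_1:c_2:c_3)=(a:b:c:d)$. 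Since the coefficient map of the four-lines construction of Subsection~\ref{subsection:4-lines-disjoint} is precisely $\sigma\circ\rho$, this point already satisfies \eqref{abcd}, and only \eqref{2} remains.

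For \eqref{2} I would rewrite $D_{12},D_{13},D_{14}$ in the bihomogeneous coordinates of the proof of Lemma~\ref{lemma:surjective_separate}, namely $(s_1:t_1)=(c_0-c_2:c_3-c_1)$ and $(s_2:t_2)=(c_1+c_3:c_0+c_2)$. Pushing the parametrization $\mathbb{P}^1\times\mathbb{P}^1\simeq Q$ through $\sigma$ gives $a=(s_1^2-t_1^2)(s_2^2-t_2^2)$, $c+d=2(s_1^2+t_1^2)(s_2^2+t_2^2)$ and $c-d=-8s_1t_1s_2t_2$, and a direct computation then yields
\begin{align*}
D_{12}&=-(s_1^2-t_1^2)(s_2^2-t_2^2)=-a,\\
D_{13}&=(s_1^2+t_1^2)(s_2^2+t_2^2)=\tfrac12(c+d),\\
D_{14}&=-s_1t_1s_2t_2=\tfrac18(c-d).
\end{align*}
Hence $D_{12}D_{13}D_{14}=-\tfrac{1}{16}\,a(c+d)(c-d)$, so \eqref{2} is equivalent to $a(c+d)(c-d)\neq0$.

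Finally, Corollary~\ref{corollary:singular} identifies $\overline{\mathbb{P}^{4\mathbb{A}_1}_{a+b=0}}$ with the curve $\{a(c+d)(c-d)=0\}$, which is the union of strata $\mathbb{P}^{4\mathbb{A}_1}_{a+b=0}\sqcup\mathbb{P}^{6\mathbb{A}_1}_{a+b=0}\sqcup\mathbb{P}^{\operatorname{Red}}_{a+b=0}$ and is therefore disjoint from the stratum containing $(a:c:d)$. Thus $a(c+d)(c-d)\neq0$, giving \eqref{2} and completing the proof. The main obstacle is the middle step: the Lemma by itself guarantees only $c_0^2\neq c_2^2$, so one must show that the \emph{prescribed} singularity type of $X$ is exactly what forces the full disjointness --- concretely, that $D_{12}D_{13}D_{14}$ is proportional to the defining polynomial $a(c+d)(c-d)$ of the closure of the $4\mathbb{A}_1$-stratum, after which the disjointness of the stratification does the rest.
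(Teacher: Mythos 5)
Your proposal is correct and follows essentially the same route as the paper: the paper's proof is the one-liner ``By Corollary~\ref{corollary:singular}, $a(c^2-d^2)\neq 0$, so the assertion follows from Lemma~\ref{lemma:surjective_separate},'' which is exactly your argument with the key computation left implicit. Your explicit verification that $D_{12}D_{13}D_{14}=-\tfrac{1}{16}\,a(c+d)(c-d)$, so that \eqref{2} is precisely the condition of avoiding $\overline{\mathbb{P}^{4\mathbb{A}_1}_{a+b=0}}=\{a(c^2-d^2)=0\}$, is the step the authors suppress, and you carry it out correctly.
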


\begin{proof}
By Corollary~\ref{corollary:singular}, $a(c^2-d^2)\neq 0$,
so the~assertion follows from Lemma \ref{lemma:surjective_separate}.
\end{proof}

\subsection{Intersecting lines}
\label{subsection:4-lines-intersecting}
What we did in Appendix~\ref{subsection:4-lines-disjoint} works also in some cases when the~lines $L_1$, $L_2$, $L_3$, $L_4$ are not disjoint.
Namely, let $X\subset(\mathbb{P}^1)^4$ be the~divisor \eqref{equation:1-1-1-1} for
$$
(a:b:c:d)=(-u:u:1:1)
$$
with some $u\in\mathbb{C}\setminus\{\pm 1\}$.
If $u\ne 0$, then $X$ has $4$ ordinary double points by Theorem~\ref{theorem:sing}.
Similarly, if $u=0$, then $X$ has $6$ ordinary double points.
Moreover, by Lemma \ref{lemma:surjective_separate}, there exists $(c_0:c_1:c_2:c_3)\in\mathbb{P}^3$ such that $c_0^2\neq c_2^2$ and
$$
\sigma\circ\rho(c_0:c_1:c_2:c_3)=(-u:u:1:1).
$$
Replacing $(c_2,c_3)$ with $(-c_2,-c_3)$ if necessary, we may assume that $c_1=c_3$. Then
$$
(u:1)=\left(4c_1^2-(c_0+c_2)^2:4c_1^2+(c_0+c_2)^2\right).
$$
Note that $L_1\cap L_4=(-1:0:1:0)$ and $L_2\cap L_3=(0:1:0:-1)$. If $u\ne 0$, then
$$
(L_1\cup L_4)\cap (L_2\cup L_3)=\varnothing.
$$
Similarly, if $u=0$, then $L_{1}\cap L_{2}=(1:\pm 1:1:\pm 1)$ and $L_{3}\cap L_{4}=(-1:\pm 1:-1:\pm 1)$.

The image of the~rational map $\chi\colon\mathbb{P}^3\dasharrow(\mathbb{P}^1)^4$ given by \eqref{chi} defines our divisor~$X$.
Moreover, if $u\neq 0$, we have the~following $G$-equivariant commutative diagram:
\begin{equation}
\label{equation:4-lines-4-nodes}
\xymatrix{
U\ar@{->}[d]_{\pi_1}&&Y\ar@{->}[ll]_{\pi_2}\ar@{->}[d]^{\eta}\\%
\mathbb{P}^3\ar@{-->}[rr]_{\chi} && X}
\end{equation}
where $\pi_1$ blows up the~points $L_1\cap L_4$ and $L_2\cap L_3$,
$\pi_2$ blows up the~strict transforms of the~lines $L_1$, $L_2$, $L_3$, $L_4$,
and $\eta$ is the~anticanonical morphism.
Note that $\eta$ contracts the~strict transforms of the~two lines in $\mathbb{P}^3$
that intersect  $L_1$, $L_2$, $L_3$, $L_4$ --- these are the~line that passes through $L_1\cap L_4$ and $L_2\cap L_3$,
and the~intersection line of the~planes containing the~curves $L_1+L_4$ and $L_2+L_3$, respectively.
The~morphism $\eta$ also contracts the~strict transforms of the~two lines in the~$\pi_1$-exceptional divisors
that pass through the~intersection points with strict transforms of $L_1$, $L_2$, $L_3$, $L_4$.

The~diagram \eqref{equation:4-lines-4-nodes} exists for $u=0$.
In this case, $\pi_1$ is the~blow up of the~intersection points $L_1\cap L_4$, $L_2\cap L_3$, $L_{1}\cap L_{2}$, $L_{3}\cap L_{4}$,
and $\eta$ contracts the~strict transforms of the~two lines in $\mathbb{P}^3$
that intersect  $L_1$, $L_2$, $L_3$, $L_4$ --- the~line that passes through $L_1\cap L_4$ and $L_2\cap L_3$,
and the~line that passes through $L_{1}\cap L_{2}$ and $L_{3}\cap L_{4}$.
In addition to these two curves, the~morphism $\eta$ also contracts the~strict transforms of the~four lines in the~$\pi_1$-exceptional divisors
that pass through the~intersection points with strict transforms of   $L_1$, $L_2$, $L_3$, $L_4$.

\subsection{Merging lines}
\label{subsection:2-lines}

The construction described in Appendix~\ref{subsection:4-lines-disjoint} degenerate
in the~case when two pair of lines among $L_1$, $L_2$, $L_3$, $L_4$ merge together.
To be precise, let us fix two disjoint lines $L_1$ and $L_2$ in $\mathbb{P}^3$,
and let $\pi_1\colon U\to\mathbb{P}^3$ be the~blow up of the~lines $L_1$ and $L_2$.
Then we have the~following commutative diagram
$$
\xymatrix{
&U\ar@{->}[dl]_{\pi_1}\ar@{->}[dr]^{\nu}&\\%
\mathbb{P}^3\ar@{-->}[rr]_{\rho}&&\mathbb{P}^1\times\mathbb{P}^1}
$$
where $\rho$ is given by the~product of projections from $L_1$~and~$L_2$, and $\nu$ is a~$\mathbb{P}^1$-bundle.
Let~$E_1$ and $E_2$ be the~$\pi_1$-exceptional surfaces such that $\pi_1(E_1)=L_1$ and $\pi_1(E_2)=L_2$,
let~$L_3\subset E_1$ and $L_4\subset E_2$ be (possibly reducible) curves such that
$\nu(L_2)$ and $\nu(L_3)$ are curves of degree $(1,1)$, and one of the~following cases holds:
\begin{enumerate}
\item[(1)] $L_3$ and $L_4$ are smooth and the~intersection $\nu(L_3)\cap\nu(L_4)$ consists of two points;
\item[(2)] $L_3$ and $L_4$ are singular and the~intersection $\nu(L_3)\cap\nu(L_4)$ consists of two points;
\item[(3)] $L_3$ and $L_4$ are smooth and $\nu(L_3)=\nu(L_4)$.
\end{enumerate}
Let $\pi_2\colon Y\to U$ be the~blow up of the~curves $L_3$ and $L_4$,
and let $\widetilde{E}_1$ and $\widetilde{E}_2$ be the~strict transforms on $Y$ of the~surfaces $E_1$ and $E_2$, respectively.
Then $Y$ is a~weak Fano 3-fold, and the~linear system $|-K_Y|$ gives a~birational morphism $\nu\colon Y\to X$
such that $X$ is a~divisor in $\mathbb{P}(1,1,2)\times\mathbb{P}(1,1,2)$ of degree $(2,2)$ that can be given by \eqref{equation:2-2},
and we have the~following commutative diagram:
$$
\xymatrix{
&U\ar@{->}[dl]_{\pi_1}\ar@{->}[dr]^{\nu}&&Y\ar@{->}[dr]^{\eta}\ar@{->}[ll]_{\pi_2}&\\%
\mathbb{P}^3\ar@{-->}[rr]_{\chi} &&\mathbb{P}^1\times\mathbb{P}^1 &&X\ar@{-->}[ll]^{\rho}}
$$
where the~map $\rho$ is given by the~natural projection $\mathbb{P}(1,1,2)\times\mathbb{P}(1,1,2)\dasharrow\mathbb{P}^1\times\mathbb{P}^1$.

The morphism $\nu$ contracts  $\widetilde{E}_1$ and $\widetilde{E}_2$ such that
$\eta(\widetilde{E}_1)$ and $\eta(\widetilde{E}_2)$ are curves in $\mathrm{Sing}(X)$,
and $\eta$ also contracts strict transforms of the~fibers of the~$\mathbb{P}^1$-bundle $\nu$ that intersect both curves $L_3$ and $L_4$.
In particular, if the~intersection $\nu(L_3)\cap\nu(L_4)$ consists of two points, there are exactly two such fibers --- the~fibers over the~points $\nu(L_3)\cap\nu(L_4)$.
Similarly, if
$$
\nu(L_3)=\nu(L_4),
$$
then such fibers span a~surface in $Y$, whose image in $\mathbb{P}^3$ is a~smooth quadric surface,
so~the~morphism $\eta$~contracts this surface to another curve of singularities of the~3-fold $X$.
In both cases, the~3-fold $X$ is one of the~divisors \eqref{equation:2-2} parametrised by points in $\widetilde{\mathbb{P}^{\operatorname{Sing}}}\cap\mathbb{E}$.
Moreover, we can obtain all such divisors using this construction.

\end{document}